\newtheorem{theorem}{Theorem}
\newtheorem{remark}{Remark}
\newtheorem{definition}{Definition}
\newtheorem{lemma}{Lemma}
\numberwithin{theorem}{section}
\numberwithin{definition}{section}
\numberwithin{theorem}{section}
\numberwithin{lemma}{section}
\numberwithin{corollary}{section}
\crefname{algocf}{algorithm}{algorithms}
\Crefname{algocf}{Algorithm}{Algorithms}
\DeclareDocumentCommand{\textproc}{m}{\texttt{#1}}
\newcommand{\SetAlgorithmStyle}{
	\setcounter{AlgoLine}{0}
	\SetKwData{Left}{left}\SetKwData{This}{this}\SetKwData{Up}{up}
	\SetKwProg{Fn}{Function}{}{}
	\SetKwInput{Input}{Input}
	\SetKwInput{Output}{Output}
	\SetKwComment{Comment}{$\triangleright$\phantom{$\triangleright$}}{}
	\SetCommentSty{commentfont}
	\SetKwFor{For}{for}{}{end}
	\SetArgSty{}
	\DontPrintSemicolon
}
\DeclareDocumentCommand{\func}{m m}{\mathtt{#1}(#2)}
\let\oldnl\nl%
\newcommand{\nonl}{
	\renewcommand{\nl}{\let\nl\oldnl}}%
\renewcommand{\algocf@caption@boxruled}{%
	\hrule
	\hbox to \hsize{%
		\vrule\hskip-0.4pt
		\vbox{   
			\vskip\interspacetitleboxruled%
			\unhbox\algocf@capbox\hfill
			\vskip\interspacetitleboxruled
		}%
		\hskip-0.4pt\vrule%
	}\nointerlineskip%
}%
\pgfplotsset{compat=newest}
\newcommand{\la}{\mathcal{a}}
\newcommand{\lA}{\mathcal{A}}
\newcommand{\lb}{\mathcal{b}}
\newcommand{\lB}{\mathcal{B}}
\newcommand{\lc}{\mathcal{c}}
\newcommand{\lC}{\mathcal{C}}
\newcommand{\lz}{\mathcal{z}}
\newcommand{\lZ}{\mathcal{Z}}
\newcommand{\spec}{\operatorname{spec}}
\newcommand{\bigslant}[2]{{\raisebox{.2em}{$#1$}\left/\raisebox{-.2em}{$#2$}\right.}}
\newcommand{\set}[2]{\{#1 \, : \, #2\}}
\newcommand{\ip}[3][2]{\langle #2,#3\rangle_{#1}}
\newcommand*\cexp[2]{e^{2\pi i\ip{#1}{#2}}}
\newcommand{\fs}{
	\mathbcal{L}}
\newcommand{\mc}[1]{
	\mathcal{#1}}
\newcommand{\PT}{\mc{P}}
\newcommand{\lop}{L}
\newcommand{\trop}{\mathbb{T}}
\newcommand{\pc}{\Xi}
\newcommand{\se}{\mathfrak{s}} %
\newcommand{\st}{\mathfrak{t}}
\newcommand{\sed}{\mathfrak{d}}%
\renewcommand{\sec}{\mathfrak{c}}%
\newcommand{\see}{\mathfrak{e}}
\newcommand{\sef}{\mathfrak{f}}
\newcommand{\seu}{\mathfrak{u}}
\newcommand{\sev}{\mathfrak{v}}
\newcommand{\sel}{\se}%
\renewcommand{\seu}{\mathfrak{u}} %
\newcommand{\m}[1]{
	\mathbb{#1}}
\newcommand{\mL}{
	\mathbb{L}}
\newcommand{\mR}{
	\mathbb{R}}
\newcommand{\mC}{
	\mathbb{C}}
\newcommand{\mZ}{
	\mathbb{Z}}
\newcommand{\mQ}{
	\mathbb{Q}}
\newcommand{\mN}{
	\mathbb{N}}
\newdimen\XCoord
\newdimen\YCoord
\renewcommand*
	\edef
	\let\@ifnextchar\new@ifnextchar
\newcommand{\providepgfdeclarepatternformonly}[5]{%
	\pgfutil@ifundefined{pgf@pattern@name@#1}{%
		\pgfdeclarepatternformonly{#1}{#2}{#3}{#4}{#5}%
	}{%
	}%
}
\definecolor{hellgrau}{rgb}{.75,.75,.75}
\definecolor{colmath}{rgb}{0.443,0.659,0.816} %
\definecolor{colgs}{rgb}{0.243,0.529,0.757} %
\colorlet{colmathA}{colmath!150>wheel,1,4}
\colorlet{colmathB}{colmath!150>wheel,2,4}
\colorlet{colmathC}{colmath!150>wheel,3,4}
\colorlet{colmathD}{colmath!150>wheel,2,12}
\colorlet{colmathE}{colmath!150>wheel,10,12}
\colorlet{colmathF}{colmath!150>wheel,1,12}
\colorlet{colmathG}{colmath!150>wheel,11,12}
\colorlet{colmathN}{colmath!150>wheel,5,12}
\colorlet{colorFN}{colmath!100}
\colorlet{colorSN}{colmathE!100}
\colorlet{colorTN}{colmathG!50}
\colorlet{MyColMathRed}{colmath!150>wheel,8,20}
\colorlet{DirectionColoring}{MyColMathRed}
\colorlet{fxnote}{colmath!150}
\colorlet{fxwarning}{colmathD}
\colorlet{fxerror}{colmathE}
\colorlet{fxfatal}{colmathF}
\DeclareDocumentCommand{\CGcaptionAaa}{ O{} }{
	\raisebox{-1pt}{
		\resizebox{!}{.85em}{
			\begin{tikzpicture}
				\node[LaaA] at (0,0) {$#1$};
			\end{tikzpicture}
		}}}
\DeclareDocumentCommand{
	\CGcaptionBaa}{O{}}{
	\raisebox{-1pt}{
		\resizebox{!}{.85em}{
			\begin{tikzpicture}
				\node[LaaB] at (0,0) {$#1$};
			\end{tikzpicture}
		}}}
\DeclareDocumentCommand{
	\CGcaptionAab}{O{}}{
	\raisebox{-1pt}{
		\resizebox{!}{.85em}{
			\begin{tikzpicture}
				\node[LabA] at (0,0) {\rotatebox{150}{$#1$}};
			\end{tikzpicture}
		}}}
\DeclareDocumentCommand{
	\CGcaptionBab}{O{}}{
	\raisebox{-1pt}{
		\resizebox{!}{.85em}{
			\begin{tikzpicture}
				\node[LabB] at (0,0) {$#1$};
			\end{tikzpicture}
		}}}
\DeclareDocumentCommand{
	\CGcaptionAba}{O{}}{
	\raisebox{-1pt}{
		\resizebox{!}{.85em}{
			\begin{tikzpicture}
				\node[LbaA] at (0,0) {$#1$};
			\end{tikzpicture}
		}}}
\DeclareDocumentCommand{
	\CGcaptionBba}{O{}}{
	\raisebox{-1pt}{
		\resizebox{!}{.85em}{
			\begin{tikzpicture}
				\node[LbaB] at (0,0) {$#1$};
			\end{tikzpicture}
		}}}
\DeclareDocumentCommand{
	\CGcaptionAbb}{O{}}{
	\raisebox{-1pt}{
		\resizebox{!}{.85em}{
			\begin{tikzpicture}
				\node[LbbA] at (0,0) {\rotatebox{-30}{$#1$}};
			\end{tikzpicture}
		}}}
\DeclareDocumentCommand{
	\CGcaptionBbb}{O{}}{
	\raisebox{-1pt}{
		\resizebox{!}{.85em}{
			\begin{tikzpicture}
				\node[LbbB] at (0,0) {$#1$};
			\end{tikzpicture}
		}}}
\newcommand\Item[1][]{%
	\ifx
	\relax#1\relax  \item
	\else
	\item[#1]
	\fi
	\abovedisplayskip=0pt\abovedisplayshortskip=0pt~\vspace*{-\baselineskip}}
\newlength\MyPageWidth
\DeclareDocumentCommand
		\noindent\fbox{
			\includegraphics{#2.pdf}}
		\noindent\includegraphics{#2.pdf}
\DeclareDocumentCommand
		\noindent\fbox{
			\resizebox{#2\MyPageWidth}{!}{\includegraphics{#3.pdf}}}
\DeclareDocumentCommand{\TikzScope}{m m}{
	\begin{scope}
		\node[fill=white,fill opacity=.25,text opacity=1,label=above:{\makebox[0pt][c]{#1}
		}] (p1) at (0,0) { {\TikzFig{#2}} };
	\end{scope}
}
\DeclareDocumentCommand{\TikzScopephantom}{m m}{
	\begin{scope}
		\node[fill=white,fill opacity=0,opacity=0,text opacity=0,label=above:{\makebox[0pt][c]{#1}
		}] (p1) at (0,0) {\phantom{\TikzFig{#2}} };
	\end{scope}
}
\DeclareDocumentCommand{\TikzScopephantomresize}{O{.5} m m}{
	\begin{scope}
		\node[fill=white,fill opacity=0,opacity=0,text opacity=0,label=above:{\makebox[0pt][c]{#2}
		}] (p1) at (0,0) {\phantom{\TikzFigr[1][#1]{#3}} };
	\end{scope}
}
\DeclareDocumentCommand{\TikzScopeResize}{ O{.5} m m}{
	\begin{scope}
		\node[fill=white,fill opacity=.25,text opacity=1,label=above:{\makebox[0pt][c]{#2}}] (p1) at (0,0) { \TikzFigr[1][#1]{#3} };
	\end{scope}
}
\DeclareDocumentCommand{\TikzScopeABh}{ m m m m }{
	\begin{scope}
		\TikzScope{#1}{#3}
	\end{scope}
	\begin{scope}[yshift=-.5*\textwidth]
		\TikzScope{#2}{#4}
	\end{scope}
}
\DeclareDocumentCommand{\TikzFigsABh}{ O{a)} O{b)} m m }{
	\begin{tikzpicture}
		\TikzScopeABh{#1}{#2}{#3}{#4}
	\end{tikzpicture}
}
\DeclareDocumentCommand{\TikzScopeAB}{ m m m m }{
	\begin{scope}
		\TikzScope{#1}{#3}
	\end{scope}
	\begin{scope}[xshift=.5*\textwidth]
		\TikzScope{#2}{#4}
	\end{scope}
}
\DeclareDocumentCommand{\TikzScopeABphantom}{ m m m m m }{
	\begin{scope}
		\TikzScope{}{#3}
		\TikzScopephantom{#1}{#5}
	\end{scope}
	\begin{scope}[xshift=.5*\textwidth]
		\TikzScope{}{#4}
		\TikzScopephantom{#2}{#5}
	\end{scope}
}
\DeclareDocumentCommand{\TikzScopeABphantomresize}{ m m m m m }{
	\begin{scope}
		\TikzScopeResize{}{#3}
		\TikzScopephantomresize{#1}{#5}
	\end{scope}
	\begin{scope}[xshift=.5*\textwidth]
		\TikzScopeResize{}{#4}
		\TikzScopephantomresize{#2}{#5}
	\end{scope}
}
\DeclareDocumentCommand{\TikzScopeABr}{ m m m m }{
	\begin{scope}
		\TikzScopeResize{#1}{#3}
	\end{scope}
	\begin{scope}[xshift=.5*\textwidth]
		\TikzScopeResize{#2}{#4}
	\end{scope}
}
\DeclareDocumentCommand{\TikzFigsABphantom}{ O{a)} O{b)} m m m }{
	\begin{tikzpicture}
		\TikzScopeABphantom{#1}{#2}{#3}{#4}{#5}
	\end{tikzpicture}
}
\DeclareDocumentCommand{\TikzFigsABphantomresize}{ O{a)} O{b)} m m m }{
	\begin{tikzpicture}
		\TikzScopeABphantomresize{#1}{#2}{#3}{#4}{#5}
	\end{tikzpicture}
}
\DeclareDocumentCommand{\TikzFigsAB}{ O{a)} O{b)} m m }{
	\begin{tikzpicture}
		\TikzScopeAB{#1}{#2}{#3}{#4}
	\end{tikzpicture}
}
\DeclareDocumentCommand{\TikzFigsABresize}{ O{a)} O{b)} m m }{
	\begin{tikzpicture}
		\TikzScopeABr{#1}{#2}{#3}{#4}
	\end{tikzpicture}
}
\DeclareDocumentCommand{\TikzFigsABCD}{ O{a)} O{b)} O{c)} O{d)} m m m m}{
	\begin{tikzpicture}
		\TikzScopeAB{#1}{#2}{#5}{#6}
		\begin{scope}[yshift=-.45*\textwidth]
			\TikzScopeAB{#3}{#4}{#7}{#8}
		\end{scope}
	\end{tikzpicture}
}
\DeclareDocumentCommand{\TikzFigsABCDtwo}{ O{a)} O{b)} O{c)} O{d)} m m m m}{
	\begin{tikzpicture}
		\TikzScopeAB{#1}{#2}{#5}{#6}
		\begin{scope}[yshift=-.4*\textwidth]
			\TikzScopeAB{#3}{#4}{#7}{#8}
		\end{scope}
	\end{tikzpicture}
}
\DeclareDocumentCommand{\TikzFigsABCDresize}{ O{a)} O{b)} O{c)} O{d)} m m m m}{
	\begin{tikzpicture}
		\TikzScopeABr{#1}{#2}{#5}{#6}
		\begin{scope}[yshift=-.42*\textwidth]
			\TikzScopeABr{#3}{#4}{#7}{#8}
		\end{scope}
	\end{tikzpicture}
}
\DeclareDocumentCommand{\TikzFigsABCDresizeCYShift}{ O{a)} O{b)} O{c)} O{d)} m m m m m}{
	\begin{tikzpicture}
		\TikzScopeABr{#1}{#2}{#5}{#6}
		\begin{scope}[yshift=-#9pt]
			\TikzScopeABr{#3}{#4}{#7}{#8}
		\end{scope}
	\end{tikzpicture}
}
\DeclareDocumentCommand{\TikzFigsABCresize}{ O{a)} O{b)} O{c)} m m m}{
	\begin{tikzpicture}
		\TikzScopeABr{#1}{#2}{#4}{#5}
		\begin{scope}[yshift=-.5*\textwidth]
			\TikzScopeResize{#3}{#6}
		\end{scope}
	\end{tikzpicture}
}
\DeclareDocumentCommand{\TikzFigsXYZresize}{ O{a)} O{b)} O{c)} m m m}{
	\begin{tikzpicture}
		\begin{scope}[xshift=0*\textwidth]
			\TikzScopeResize[.275]{#1}{#4}
		\end{scope}
		\begin{scope}[xshift=.325*\textwidth]
			\TikzScopeResize[.275]{#2}{#5}
		\end{scope}
		\begin{scope}[xshift=.65*\textwidth]
			\TikzScopeResize[.275]{#3}{#6}
		\end{scope}
	\end{tikzpicture}
}
\DeclareDocumentCommand{\TikzFigsABC}{ O{a)} O{b)} O{c)} m m m}{
	\begin{tikzpicture}
		\TikzScopeAB{#1}{#2}{#4}{#5}
		\begin{scope}[yshift=-.5*\textwidth]
			\TikzScope{#3}{#6}
		\end{scope}
	\end{tikzpicture}
}
\DeclareDocumentCommand{\mat}{O{c} m}{
	\ensuremath{\left[\begin{matrix*}[#1]
			#2
		\end{matrix*}\right]}
}
\DeclareDocumentCommand{\exbox}{O{Discretized Laplacian with full weighting} m}{
	\begin{center}
		\begin{tikzpicture}
			\node[rectangle, top color=white!10, bottom color=white!10, rounded corners=5pt, inner xsep=5pt, inner ysep=6pt, outer ysep=10pt,draw=gray]{
				\begin{minipage}{0.95\linewidth}
					\textbf{#1}\\[\medskipamount]
					#2
				\end{minipage}
			};%
		\end{tikzpicture}
	\end{center}
}
			\def\pgfplotstable@loc@TMPd{\pgfplotstablegetelem{##1}{#1}\of}
			\edef\tempa{\pgfplotsretval}
			\edef\tempb{#2}
\pgfplotsset{
	discard if not/.style 2 args={
		x filter/.append code={
			\edef\tempa{\thisrow{#1}}
			\edef\tempb{#2}
			\ifx
			\tempa\tempb
			\else
			\def\pgfmathresult{NaN}
			\fi
		},
	},
}
\pgfplotsset{
	discard if out of range/.style n args={3}{
		x filter/.code={
			\edef\tempa{\thisrow{#1}}
			\edef\tempb{#2}
			\edef\tempc{#3}
			\ifdim
			\tempa pt> \tempb pt
			\ifdim
			\tempa pt< \tempc pt
			\else
			\def\pgfmathresult{inf}
			\fi
			\else
			\def\pgfmathresult{inf}
			\fi
		}
	}
}
\newcommand{\sorttable}[1]{
	\pgfplotstablegetrowsof{#1}
	\pgfplotstablesort[sort key={2},sort cmp={float >}]{\sortedtable}{#1}%
	\pgfplotstablegetelem{1}{2}\of{\sortedtable}%
	\let\Zmax=\pgfplotsretval%
}
\newcommand{\findZmax}[1]{
	\pgfplotstableread{#1}{\mytable}
	\pgfmathsetmacro{\mymax}{0}
	\pgfplotstablegetrowsof{\mytable}
	\pgfmathtruncatemacro{\NumRows}{\pgfplotsretval-1}
	\pgfplotsforeachungrouped \i in {0,...,\NumRows}{
		\pgfplotstablegetelem{\i}{2}\of{\mytable}
		\pgfmathsetmacro{\myvalue}{\pgfplotsretval}
		\pgfmathparse{(\myvalue>\mymax)?1:0}
		\ifdim
		\pgfmathresult pt>0pt \pgfmathsetmacro{\mymax}{\pgfplotsretval}
		\fi
	}
	\let\Zmax=\mymax%
	\pgfmathsetmacro{\zmd}{\mymax - .0002}
	\let\ZmaxDomainMin=\zmd%
}
\pgfmathsetmacro{\atomsize}{.2}
\pgfmathsetmacro{\atomsizesmall}{.1}
\pgfmathsetmacro{\coarsescale}{2.5}
\tikzstyle{atom}=[circle, inner sep=0 cm, minimum width=\atomsize cm]
\tikzstyle{catom}=[circle, inner sep=0 cm, minimum width=1.3*\atomsize cm]
\tikzstyle{smallatom}=[circle, inner sep=0 cm, minimum width=\atomsizesmall cm]
\tikzstyle{csmallatom}=[circle, inner sep=0 cm, minimum width=1.5*\atomsizesmall cm]
\tikzstyle{typeA}=[draw=black, fill=white]
\tikzstyle{typeB}=[draw=black, fill=black!50!white]
\tikzstyle{bond}=[draw=black,thick,shorten >=.5*\atomsize cm,shorten <=.5*\atomsize cm]
\tikzstyle{bondthin}=[draw=black,thick,shorten >=.5*\atomsize cm,shorten <=.5*\atomsize cm,very thin, dotted]
\tikzstyle{bondsmallatom}=[draw=black!50!white,thick,shorten >=.5*\atomsizesmall cm,shorten <=.5*\atomsizesmall cm]
\tikzstyle{coarse}=[draw=colmath!150,very thick,shorten >=\atomsize cm,shorten <=\coarsescale*\atomsize/2 cm]
\tikzstyle{zigzag}=[draw=colmathA,very thick,loosely dashdotted]
\tikzstyle{armchair}=[draw=colmathC,very thick,loosely dotted]
\tikzstyle{he}=[regular polygon,regular polygon sides=6,inner sep=0, minimum width=\coarsescale*\atomsize cm,draw=black,thick]
\tikzstyle{tr}=[regular polygon,regular polygon sides=3,rotate=30,inner sep=0, minimum width=\coarsescale*\atomsize cm,draw=black,thick]
\tikzstyle{st}=[shape=star,star points=6, star point ratio=1.5, inner sep=0,rotate=30, minimum width=\coarsescale*\atomsize cm,draw=black,thick]
\tikzstyle{ci}=[circle, inner sep=0, minimum width=\coarsescale*\atomsize cm,draw=black,thick]
\tikzstyle{BZ}=[pattern color=colmathA, pattern=BZNEL]
\tikzstyle{DG}=[pattern color=colmath!150, pattern=north west lines]
\tikzstyle{Gzero}=[ci,fill=colmath!50]
\tikzstyle{Gone}=[tr,fill=colmathA!50]
\tikzstyle{Gtwo}=[he,fill=colmathB!50]
\tikzstyle{Gthree}=[st,fill=colmathC!50]
\tikzstyle{prev}=[ci,fill=colmath!50]
\tikzstyle{curr}=[st,fill=colmathA!50]
\tikzstyle{next}=[he,fill=colmathC!50]
\tikzstyle{LaaA}=[ci,fill=colmath!50]
\tikzstyle{LaaB}=[ci,fill=colmath!50!white!75!black]
\tikzstyle{LabA}=[tr,fill=colmathA!50,rotate=180]
\tikzstyle{LabB}=[tr,fill=colmathA!50!white!75!black]
\tikzstyle{LbaA}=[he,fill=colmathB!50]
\tikzstyle{LbaB}=[he,fill=colmathB!50!white!75!black]
\tikzstyle{LbbA}=[st,fill=colmathC!50]
\tikzstyle{LbbB}=[st,fill=colmathC!50!white!75!black]
\tikzstyle{wshort}=[colmathA!50!black,bend right=15,-latex,very thick]
\tikzstyle{wlong}=[colmathC!50!black,bend right=30,-latex,very thick,dashed]
\tikzstyle{wlongtilde}=[colmathC!50!black,bend right=30,-latex,very thick,dotted]
\tikzstyle{wlong3}=[colmathC!50,bend right=30,-latex,very thick,dashdotted]
\pgfmathsetmacro{\myout}{180-80}
\pgfmathsetmacro{\myin}{180-20}
\tikzstyle{selfloopTL}=[out=\myout,in=\myin,looseness=13]
\tikzstyle{selfloopTR}=[out=80,in=20,looseness=13]
\tikzstyle{selfloopstyle}=[-latex,very thick]
\tikzstyle{wshort2}=[colmathA!50!black,bend right=15,-latex,very thick]
\tikzstyle{wlong2}=[colmathC!50!black,bend right=30,-latex,very thick,dashed]
\tikzstyle{LFAb}=[draw=black,-latex, thick]
\tikzstyle{LFAbfont}=[font=\tiny]
\tikzstyle{LFAaxis}=[draw=black,thin,-latex]
\tikzstyle{LFAaxisfont}=[font=\tiny]
\tikzstyle{LFAaxistickfont}=[font=\tiny]
\def\thisrowunavailableloadtabledirectly{thisrow_unavailable_load_table_directly}
\def\sudogetthisrow#1#2{
	\edef#2{\thisrow{#1}}
	\ifx
	#2\thisrowunavailableloadtabledirectly %
	\getthisrow{#1}{}
	\let#2=\pgfplotsretval
	\fi
}
\title{Automated Local Fourier Analysis (aLFA)\thanks{This work was partially funded by Deutsche Forschungsgemeinschaft (DFG) Transregional Collaborative Research Centre 55 (SFB/TRR55)}}
\author{Karsten Kahl\footnote{School of Mathematics and Natural Sciences, University of Wuppertal, 42097 Germany, \texttt{\{kkahl,kintscher\}@math.uni-wuppertal.de}} \and Nils Kintscher\footnotemark[2]}
\begin{document}
	
	\maketitle

	\begin{abstract}
		Local Fourier analysis is a commonly used tool to assess the quality and aid in the construction of geometric multigrid methods for translationally invariant operators. In this paper we automate the process of local Fourier analysis and present a framework that can be applied to arbitrary, including non-orthogonal, repetitive structures. To this end we introduce the notion of crystal structures and a suitable definition of corresponding wave functions, which allow for a natural representation of almost all translationally invariant operators that are encountered in applications, e.g., discretizations of systems of PDEs, tight-binding Hamiltonians of crystalline structures, colored domain decomposition approaches and last but not least two- or multigrid hierarchies. Based on this definition we are able to automate the process of local Fourier analysis both with respect to spatial manipulations of operators as well as the Fourier analysis back-end. 
		This automation most notably simplifies the user input by removing the necessity for compatible representations of the involved operators. Each individual operator and its corresponding structure can be provided in any representation chosen by the user.
	\end{abstract}

	\section{Introduction}
	Local Fourier analysis (LFA) is a powerful tool used in the construction and analysis of multigrid methods introduced in \cite{Bran1977}. The fundamental idea of LFA is to leverage the connection between position space and frequency space via the Fourier transform. That is, in case the involved operators can be described by stencils in position space, meaning that they are translationally invariant, their Fourier transform yields so-called symbols, which can be handled much more easily.
	In the context of multigrid methods LFA can be used to obtain precise approximations of the asymptotic convergence rate by assessing the spectral radius of the corresponding error propagation operator~\cite{Saad}. This approximation of the convergence rate is (asymptotically) still valid if the translational invariance is slightly violated, as for example when lexicographic Gauss-Seidel type smoothers are used, or in the case of certain non-periodic boundary conditions \cite{Bran1994, MR3866073, Stevenson1990}. In other cases a similar convergence rate can usually be obtained by additional processing \cite{KahlKint2018,TrotOostSchu2001}.
	Due to these facts, LFA is one of the main tools in the quantitative analysis of two- and multigrid methods.

	An introduction to LFA including several examples can be found in~\cite{TrotOostSchu2001,WienJopp2004}. Multigrid methods have first been considered for the solution of the linear systems of equations originating in the discretization of (elliptic) partial differential equations (PDEs)~\cite{TrotOostSchu2001}. Due to the fact that the simplest tiling of space is rectangular and discretizations are particularly simple to carry out on such tilings, the usual multigrid components and the LFA have originally been designed and tailored for such discretizations (cf.~\cite{TrotOostSchu2001,WienJopp2004}). Several other geometries, including systems of PDEs, have been considered in the past as well. LFA has been carried out for operators defined on triangular tilings in~\cite{GaspGracLisb2009} and on hexagonal tilings in~\cite{ZhouFult}. Further, it has been applied to edge-based quadrilateral discretizations \cite{BoonLentVand2008}, regular Voronoi meshes associated with acute triangular grids \cite{RodrSaliGaspLisb2014}, edge-based discretizations on triangular grids \cite{rodrigo_sanz_gaspar_lisbona_2015} and jumping coefficients on rectangular grids \cite{RitExtending2017,RittichBolten2018}. These papers do a complete two-grid analysis, and in some cases even a three-grid analysis, which was first introduced in~\cite{WienOost2001}. 
	
	While the concept of LFA is well understood, its application quickly becomes complex and involved %
	the more frequencies get intermixed, e.g., by block smoothers, in a three-grid analysis, or in higher dimensional problems ($n > 2$). Thus, there exists software that automates the application of the LFA~\cite{lfalab_HR,WienJopp2004}. In contrast to the software described in~\cite{WienJopp2004}, which basically consists of a collection of templates corresponding to certain smoother and restriction/prolongation strategies for specific problems, the software~\cite{lfalab_HR}, freely available on GitHub,\footnote{\href{https://github.com/hrittich/lfa-lab}{\texttt{github.com/hrittich/lfa-lab}}} can be used to analyze arbitrary translationally invariant operators on rectangular grids. This software has for example been used to analyze colored block Jacobi methods in combination with aggressive coarsening applied to PDEs with jumping coefficients in~\cite{RitExtending2017, RittichBolten2018}. As the number of frequencies which get intermixed increases with the block-size of the smoother and the growing coarsening rate, a manual analysis of this problem would be laborious. %

	In contrast to the approach developed in this paper, the bulk of the analysis in the previously mentioned references is mainly carried out in frequency space. 
	LFA with an emphasize on position space is rare in the literature. 
	In \cite{Huckle2008} under the name \emph{compact Fourier analysis}, a position space oriented LFA is introduced where block Toeplitz matrices are used in order to capture the different classes of unknowns.
	This work has some aspects in common with the approach to LFA we develop in this paper, but lacks generality as it is limited to simple systems on rectangular grids. 
	
	The LFA presented in this paper unifies the position space oriented approaches and allows the treatment of operators on arbitrary repetitive structures. To do so we introduce a mathematical framework for the analysis of translationally invariant operators which alter value distributions on lattices and crystals~\cite{Ashcroft, Schrijver:1986:TLI:17634}. These structures can be based on arbitrary sets of primitive vectors, including non-orthogonal ones, i.e., non-rectangular structures. %
	These crystal structures, which naturally occur in the tight-binding descriptions of solid-state physics \cite{Ashcroft} or discretizations of systems of PDEs, enable the convenient and concise description of the resulting operators and allow for the automatic generation of their representations when enlarging their translational invariance, e.g., coarsening in the context of multigrid methods. Furthermore, they are very helpful in the representation of overlapping and non-overlapping block smoothers.
	Our framework is developed to such an extent that the only task required of the user is to provide a description of the occurring operators with respect to (potentially non-matching) descriptions of the underlying repetitive structures, i.e., each operator can be supplied in the simplest or most convenient representation.
	The remainder of the analysis can then be carried out automatically. In contrast to previously developed LFA, this is achieved by explicitly including a connection of the operator to its underlying structure. This allows us on one hand to automate the transformation of operators in position space, e.g., by finding a least common lattice of translational invariance of two operators and to rewrite their representations accordingly. On the other hand, this focus on structure yields a natural representation and discretization of the dual space that enables the automation of the frequency space part of the analysis as well. All these tasks can be carried out using basic principles and normal forms of integer linear algebra~\cite{Schrijver:1986:TLI:17634}. In combination these developments alleviate the use of LFA by removing any manual calculations. That is, neither a mixing analysis nor transformations of operators to common (and rectangular) translational invariances have to be carried out by hand. While our automated LFA does not necessarily enlarge the set of methods that are analyzable by conventional LFA, it enables the reliable and easy-to-use analysis of complex methods on complicated structures (e.g., overlapping block smoothers and discretizations of systems of PDEs). An open-source Python implementation of the automated LFA framework~\cite{aLFA_NK} is freely available on GitLab.\footnote{\href{https://gitlab.com/NilsKintscher/alfa}{\texttt{gitlab.com/NilsKintscher/alfa}}}
	
	The automation presented in this paper does have some limitation in terms of the smoothers that can be analyzed. Any sequential, i.e., lexicographic, smoother with overlapping update regions changes values in the overlap multiple times in one application. This cannot be easily translated to the structures introduced in this paper. While such smoothers have been analyzed in frequency space before (cf.~\cite{MacLOost2011,Molenaar1991,Sivaloganathan:1991:ULM}), this particular treatment of sequential overlap is momentarily not covered in our framework. %
	Note, that the mere presence of overlap is not the problem here. By introducing a coloring, such that the complete sweep can be split into a sequence of updates where each one of them only changes values at most once, automated LFA can be applied as we are going to demonstrate in this paper. Due to the fact that a coloring in overlapping approaches also favors parallelism over their sequential counterparts, we feel that this limitation is relatively minor when targeting actual applications.
	
	The paper is organized as follows. In \cref{sec:lattices_crystals} we introduce basic notation to describe the underlying structures of the operators we want to analyze: lattices and crystals. In the context of LFA we are interested in translationally invariant operators which alter value distributions on crystals. These kind of operators and their properties are specified in \cref{sec:operators}. After that, \cref{sec:example} illustrates the introduced notation by means of the discretized Laplacian and the red-black Gauss-Seidel method, which is the simplest method where a difference to the conventional LFA becomes apparent. In here, we manually rewrite the discretized Laplacian with respect to the translational invariance of the red-black splitting, such that the method can be analyzed via the results of \cref{sec:operators}. \Cref{sec:crystal_reps_and_isos} contains the general theoretical results which are used to automate the complete procedure, removing the need to manually modify operators. In here, several results of integer linear algebra are used, which we review in \cref{sec:sublat_quotient}. 
	Using these results we obtain the algorithms given in \cref{sec:algorithms} which, in combination with the arithmetic of multiplication operators given in \cref{sec:rules_of_comp}, realize the automated LFA.
	Finally, \cref{sec:Application} contains selected examples to demonstrate the merits of the developed approach. First, we analyze a $4$-color overlapping block Gauss-Seidel smoother for the tight-binding Hamiltonian of the carbon allotrope graphene and give some two-grid convergence results. Second, we reproduce the two-level analysis for the curl-curl problem found in~\cite{BoonLentVand2008} as a further illustration of how our approach is applied to complex error propagators and to double-check its results.
	
	\section{Lattices and Crystals}\label{sec:lattices_crystals}
	In order to be able to automate the process of LFA within a unified framework
	we first have to review some basic definitions of integer linear algebra and crystallography~\cite{Ashcroft,Schrijver:1986:TLI:17634}. An (ideal) crystal is an infinite repetition, defined by a lattice, of a structure element. 
	
	\begin{definition}
		Let $\la_1,\la_2,\ldots,\la_n \in \mR^n$ be linearly independent.
		An $n$-dimensional lattice $\mL$ is the set of points
		\begin{align*}
			\mL=\set{x=\sum_{\ell=1}^n j_\ell \la_\ell \in \mR^n}{j_1, j_2,\ldots, j_n \in \mZ}.
		\end{align*}
		The vectors $\la_1,\la_2,\ldots,\la_n$ are known as the \emph{primitive vectors} or \emph{lattice basis}. Using matrix notation, i.e., $\lA:=\mat[c]{\la_1&\la_2&\ldots&\la_n}$, we can abbreviate the notation by
		$$ \mL(\lA) :=  \lA\mZ^n  = \mL.$$
	\end{definition}

	Without loss of generality we restrict the definition of the second component of a crystal, the structure element, to primitive cells of the lattice. 
	\begin{definition}
		\label{def:primitive_cell}
		A \emph{primitive cell} $\pc=\pc(\lA ) \subset \mR^n$ of a lattice $\mL(\lA )$ is a (connected) volume of space that, if translated by all vectors of $\mL$, fills up $\mR^n$ completely without any overlap, i.e.,  $$\dot\cup_{x\in\mL} \set{x + \xi }{\xi \in \pc} =\mR^n.$$%
		
		A common choice of primitive cells is given by
		$$\PT(\lA ):=\lA  [0,1)^n=\set{y\in\mR^n}{ y = \sum_{\ell=1}^n \alpha_\ell \la _\ell,\, 0 \leq \alpha_\ell < 1},$$
		i.e., parallelotopes spanned by the primitive vectors of $\mL(\lA)$. %
	\end{definition}
	
	The structure element of a crystal is now defined to consist of points $\se_{1},\ldots,\se_{m}$ that are contained in a particular primitive cell $\pc$. %
	
	\begin{definition}
		Let $\mL(\lA )$ be a lattice and $\se \in \pc(\lA )^m$, $m \in \mathbb{N}$ be the \emph{structure element}.
		A \emph{crystal} is defined as the set of tuples
		\begin{align*}
			\mL^{\se}(\lA ):= \set{(x+\se_1,x+\se_2,\ldots,x+\se_m)}{x\in\mL(\lA ),\  \se=(\se_1,\ldots,\se_m)}.
		\end{align*}
		The elements of $\mL^{\se}(\lA )$ are collectively written as $x+\se =(x+\se_1,x+\se_2,\ldots,x+\se_m)$.
	\end{definition}
	
	\begin{remark}
		We define a crystal and the associated structure element to be a tuple instead of a set as we want to study value distributions on crystals and particular operators which manipulate them. For this purpose, the order of a structure element is of importance.
	\end{remark}
	
	To give an idea of the various occurrences of crystal structures in numerical applications we illustrate typical examples in~\cref{fig:crystal_examples} together with their crystal representation. There are three main sources of repetitive structures that are well suited for crystal representations. First, the discretization of systems of PDEs on lattices lead to crystal structures, where the different species of unknowns typically constitute the structure element. Second, tight-binding Hamiltonian formulations in solid state physics for crystalline materials naturally imply a crystal representation based on the atomic structure. Last, colored domain decomposition approaches (e.g., red-black Gauss-Seidel) can be easily represented using crystals, where the smallest structure element typically consists of the union of one domain of each color.
	
	\begin{figure}
		\TikzFigsXYZresize[a) System of PDEs][b) Natural][c) Artificial]{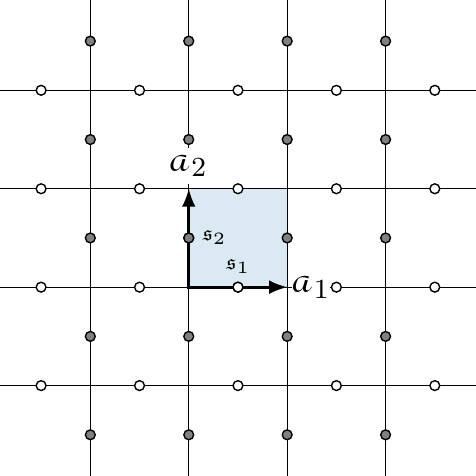}{graphene-triangular-2}{red-black-2}
		\caption[Typical occurrences of crystal representations.]{Illustration of the crystal representation of three repetitive structures of different origin each consisting of primitive vectors $\la_1,\la_2$, a shaded primitive cell $\PT(\lA)$ and a structure element $(\se_1,\se_2)$. a) Staggered discretization of the curl-curl system of PDEs with unknowns on the edges of a rectangular lattice. b) Hexagonal lattice of the carbon allotrope graphene. c) Checkerboard coloring of a rectangular lattice as it is encountered in the red-black Gauss-Seidel smoother.}\label{fig:crystal_examples}
	\end{figure}

	\subsection{Sublattices and quotient spaces}\label{sec:sublat_quotient}
	There are infinitely many representations of a crystal $\mL^\se(\lA)$. On one hand, the representation of any lattice in $n>1$ dimensions is non-unique, i.e., there exist different sets of primitive vectors that yield the same lattice structure. On the other hand, different representations of the crystal can be obtained by shifting the structure element or manipulating the underlying lattice structure, e.g., by using integer linear combinations of the primitive vectors and adjusting the structure element accordingly. 
	
	\begin{figure}
		\centering
		\resizebox{.8\textwidth}{!}{\includegraphics{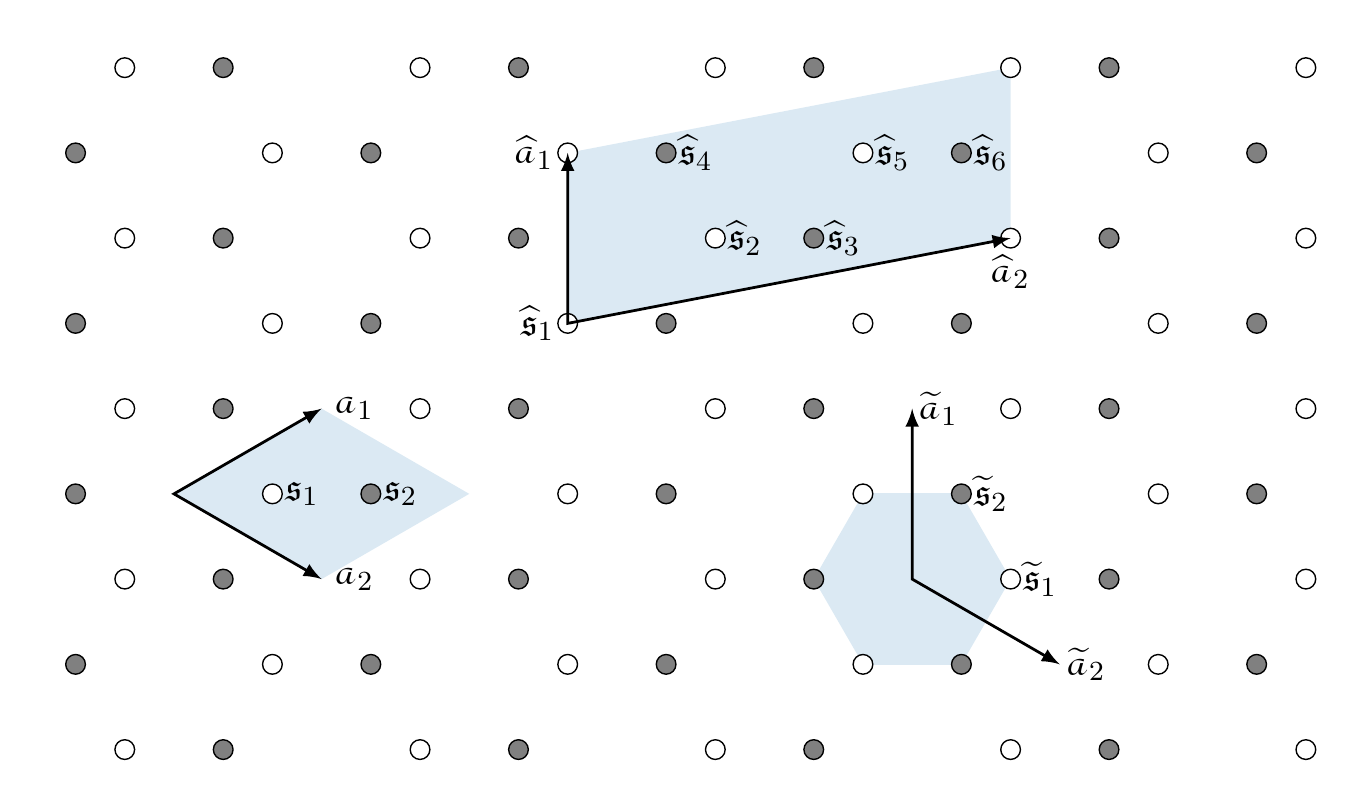}}
		\caption{The structure of graphene: Illustration of three choices of primitive vectors $\lA$, $\widehat{\lA}$ and $\widetilde{\lA}$ with associated structure elements $\se , \widehat{\se}$ and $\widetilde{\se}$ such that  $\mL^{{\se}}(\lA)=\mL^{\widehat{\se}}(\widehat{\lA})=\mL^{\widetilde{\se}}(\widetilde{\lA})$. Furthermore, three different primitive cells are illustrated: The corresponding parallelotopes $\PT(\lA)$,  $\PT(\widehat{\lA})$ as well as a hexagon corresponding to the Voronoi cell of $\mL(\widetilde{\lA})$.}
		\label{fig:crystal_example}
	\end{figure}
	
	In order to cope with this lack of uniqueness of the representation of crystals, illustrated in \cref{fig:crystal_example}, we introduce basic results from integer linear algebra~\cite{Schrijver:1986:TLI:17634}, which resolve the relationship of lattice structures.\footnote{All proofs of~\cref{lem:sublattice,thm:integer_normalforms,thm:lattice_equiv_unimod} can be found in~\cite{Schrijver:1986:TLI:17634}.} An important tool in this is the notion of a sublattice.
	
	\begin{definition}
		Let $\mL(\lA)$ and $\mL(\lC)$ be two lattices, if $\mL(\lA) \supset \mL(\lC)$ then $\mL(\lC)$ is called \emph{sublattice} of $\mL(\lA)$.
	\end{definition}
	
	\begin{lemma}
		\label{lem:sublattice}
		A lattice $\mL(\lC)$ is a sublattice of $\mL(\lA)$ if and only if $\lA^{-1} \lC \in \mZ^{n \times n}$. %
	\end{lemma}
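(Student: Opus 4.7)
The plan is to prove the biconditional by translating the set-theoretic inclusion $\mL(\lC) \subset \mL(\lA)$ into an algebraic integrality condition, using the fact that $\lA$ is invertible over $\mR$ (which follows from the linear independence of its primitive vectors, as required by the definition of a lattice).

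For the forward direction, I would exploit that each column $\lc_j$ of $\lC$ is itself an element of $\mL(\lC)$, obtained as $\lC e_j$ for the standard basis vector $e_j \in \mZ^n$. The assumed inclusion then forces every $\lc_j$ to lie in $\mL(\lA)$, so each $\lc_j$ is an integer combination of the columns of $\lA$. Collecting these coefficient vectors column-wise into a single matrix $M \in \mZ^{n\times n}$ yields $\lC = \lA M$, and left-multiplication by $\lA^{-1}$ gives $\lA^{-1}\lC = M \in \mZ^{n\times n}$.

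For the reverse direction, I would suppose $M := \lA^{-1}\lC \in \mZ^{n\times n}$, so that $\lC = \lA M$. Any element of $\mL(\lC)$ has the form $\lC z$ for some $z \in \mZ^n$, and rewriting $\lC z = \lA(Mz)$ with $Mz \in \mZ^n$ exhibits it as a member of $\lA \mZ^n = \mL(\lA)$, establishing $\mL(\lC) \subset \mL(\lA)$.

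There is no real obstacle here: the whole argument is essentially a dictionary between geometric containment of lattices and integrality of a change-of-basis matrix. The single subtlety worth being explicit about is that invertibility of $\lA$, guaranteed by the linear independence of its columns, is needed for $\lA^{-1}\lC$ to be meaningful in the first place, and that $\mZ^{n\times n}$ is closed under multiplication with integer vectors so that $Mz \in \mZ^n$ in the reverse direction.
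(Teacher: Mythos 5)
Your proof is correct and complete: the forward direction extracts integrality of $\lA^{-1}\lC$ from the fact that each column of $\lC$ lies in $\mL(\lA)$, and the reverse direction rewrites $\lC z = \lA(Mz)$; this is exactly the standard argument. The paper itself does not spell out a proof (it defers to the cited reference on integer linear algebra), but your argument is the one that reference uses, so there is nothing to add.
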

	
	A key-role in the computational comparison of lattices play the \emph{Hermite and Smith normal forms}. The Hermite normal form defines a canonical choice of primitive vectors, whereas the Smith normal form allows us to find least common sublattices. Using these canonical forms is a crucial ingredient in both the theoretical analysis and the automation of the LFA. 
	
	\begin{definition}\label{def:unimodular}
		A matrix $U \in \mZ^{n\times n}$ is called \emph{unimodular} if $\det(U)\in \{\pm 1\}$.
	\end{definition}
	
	\begin{definition}\label{def:HNF}
		A matrix $H\in \mR^{n\times n}$ is in \emph{Hermite normal form} (HNF) if 
		it is upper triangular, elementwise non-negative and its row-wise maximum is located on the diagonal.
	\end{definition}
	
	\begin{definition}\label{def:SNF}
		A matrix $S\in \mR^{n\times n}$ is in \emph{Smith normal form} (SNF) if it is a diagonal matrix and the diagonal entries satisfy $ \nicefrac{s_{i+1}}{s_{i}}\in \mZ$ for all $i=1,\ldots,n-1$. These entries are called the \emph{elementary divisors}.%
	\end{definition}
	
	\begin{theorem}\label{thm:integer_normalforms}
		Let $\lA \in \mQ^{n\times n}$, then
		\begin{enumerate}
			\item[(i)] there exists a unimodular $U \in \mZ^{n \times n}$ such that $H = \lA U$ is in HNF,  %
			\item[(ii)] there exist unimodular $U,V \in \mZ^{n \times n}$ such that $S = V \lA U$ is in SNF. %
		\end{enumerate}
		In addition both normal forms $H$ and $S$ are unique with respect to $\lA$.
	\end{theorem}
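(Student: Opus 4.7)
The plan is to establish existence constructively via elementary unimodular operations and uniqueness via invariants that survive such operations. Since the normal forms in \cref{def:HNF,def:SNF} are defined over $\mR$ but the decisive tools (unimodular matrices) live over $\mZ$, I would first clear denominators by writing $\lA = \frac{1}{d}\lA'$ with $\lA'\in\mZ^{n\times n}$ and $d\in\mN$; every algorithmic step below commutes with the scalar $1/d$ and preserves the defining HNF/SNF properties.

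For existence of the HNF in (i), I would apply a sequence of right-multiplications by elementary unimodular matrices, corresponding to column swaps, column negations, and additions of integer multiples of one column to another. A bottom-to-top sweep uses the extended Euclidean algorithm on each row's entries to produce the form $(0,\ldots,0,h_{ii},*,\ldots,*)$ with $h_{ii}>0$; a final reduction pass replaces each off-diagonal entry by its residue modulo its row's diagonal entry, enforcing the row-wise maximum condition. For the SNF in (ii), I would use the same elementary operations as both row and column operations (left and right unimodular multiplications), driving the gcd of all entries of the current block into its $(1,1)$ position, clearing the rest of the first row and column, and recursing on the lower-right $(n-1)\times(n-1)$ block. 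The divisibility $s_i\mid s_{i+1}$ is automatic since the pivot at each stage equals the gcd of the current block's entries.

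For uniqueness, I would use the greatest common divisors of $k\times k$ minors: define $d_k(\lA)$ as the gcd of all $k\times k$ minors of $\lA'$, divided by $d^k$. The Cauchy--Binet formula shows that each $k\times k$ minor of $V\lA U$ is an integer linear combination of $k\times k$ minors of $\lA$ and vice versa, so $d_k$ is invariant under unimodular left/right multiplication. For the SNF this directly yields $s_k=d_k/d_{k-1}$, so every diagonal entry is determined by $\lA$. For the HNF, the same argument fixes the diagonal since $h_{11}\cdots h_{kk}=d_k(\lA)$ for upper-triangular $H$. The off-diagonal entries are then pinned down by writing two purported HNFs as $H'=H\cdot M$, noting that $M$ is integer, upper triangular, and unimodular with $+1$ diagonal, and using the row-wise maximum constraint to force $M=I$ column by column.

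The main obstacle I expect is the HNF uniqueness argument, since the unimodular factor $U$ itself is \emph{not} unique and can be altered by any unimodular upper-triangular matrix preserving $H$. One must therefore separate the invariant data (the $d_k$) from the transformation and carefully exploit the HNF's sign and maximality conditions to conclude uniqueness of $H$ alone. The SNF case is comparatively cleaner since all relevant data is captured in the invariants $d_k$.
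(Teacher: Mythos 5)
The paper gives no proof of this theorem: a footnote in \cref{sec:sublat_quotient} defers the proofs of \cref{lem:sublattice,thm:integer_normalforms,thm:lattice_equiv_unimod} to \cite{Schrijver:1986:TLI:17634}, and your proposal is essentially the classical argument found there --- clearing denominators, elementary unimodular column (resp.\ row and column) operations driven by the Euclidean algorithm for existence, and invariants under unimodular multiplication for uniqueness. The existence parts and the Smith uniqueness via the determinantal divisors $d_k$ (gcd of all $k\times k$ minors, invariant under unimodular left/right multiplication by Cauchy--Binet, with $s_k=d_k/d_{k-1}$) are correct.

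One step is false as stated: for an upper-triangular $H$ in HNF the identity $h_{11}\cdots h_{kk}=d_k(\lA)$ fails for $k<n$. Take $H=\mat{2&1\\0&2}$, which is in HNF: the gcd of its $1\times 1$ minors is $1$, while $h_{11}=2$. The gcd of \emph{all} $k\times k$ minors does not see the HNF diagonal entrywise; the correct invariants would be the gcds of the $k\times k$ minors drawn from the last $k$ rows only. The gap is inessential, however, because the factor argument you already invoke for the off-diagonal entries also determines the diagonal: if $H=\lA U$ and $H'=\lA U'$ are two HNFs of a nonsingular $\lA$, then $M=H^{-1}H'=U^{-1}U'$ is simultaneously an integer unimodular matrix and an upper-triangular matrix with positive diagonal entries $h'_{ii}/h_{ii}$; positive integers whose product is $\det M=\pm 1$ all equal $1$, so the diagonals agree, and the reduction condition then forces $M=I$ column by column exactly as you describe. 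With that repair the proof is complete. (Two defects of the paper's definitions rather than of your argument: \cref{def:HNF} allows off-diagonal entries \emph{equal} to the diagonal, under which uniqueness actually fails, e.g.\ $\mat{2&2\\0&2}=\mat{2&0\\0&2}\mat{1&1\\0&1}$, so the strict residue condition your reduction pass produces is what is really needed; and \cref{def:SNF} does not fix the signs of the $s_i$, so your $d_k$-argument determines them only up to sign.)
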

	
	\begin{remark}
		Polynomial algorithms to compute the HNF and SNF can for example be found in \cite{ComputerAlgebraHandbook}. Implementations for the computation of these normalforms are for example part of the PARI software package~\cite{PARI2}.
	\end{remark}
	
	Using these definitions and results one obtains a precise statement about the equality of two lattices.

	\begin{theorem}
		\label{thm:lattice_equiv_unimod}
		Let $\mL(\lA)$ and $\mL(\lC)$ be two lattices then the following statements are equivalent.
		\begin{enumerate}
			\item[(i)] $\mL(\lA) = \mL(\lC)$.
			\item[(ii)] $\lA^{-1} \lC  \in \mZ^{n\times n}$ and $\lC^{-1} \lA  \in \mZ^{n\times n}$.
			\item[(iii)] There exists a unimodular matrix $U$, such that $\lC=\lA U$.
			\item[(iv)] $\lA$ and $\lC$ have the same HNF.%
		\end{enumerate}
	\end{theorem}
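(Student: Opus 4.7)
My plan is to establish the equivalence via the cycle (i) $\Leftrightarrow$ (ii) $\Rightarrow$ (iii) $\Rightarrow$ (iv) $\Rightarrow$ (iii) $\Rightarrow$ (ii), reducing everything to the earlier Lemma~\ref{lem:sublattice} and to \cref{thm:integer_normalforms}. The base of the argument is that a lattice equality is the conjunction of two sublattice inclusions, so (i) $\Leftrightarrow$ (ii) follows by applying \cref{lem:sublattice} once in each direction: $\mL(\lA) = \mL(\lC)$ is the same as $\mL(\lA) \supset \mL(\lC)$ together with $\mL(\lC) \supset \mL(\lA)$, which translate into $\lA^{-1}\lC \in \mZ^{n\times n}$ and $\lC^{-1}\lA \in \mZ^{n\times n}$ respectively.

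For (ii) $\Rightarrow$ (iii), I would set $U := \lA^{-1}\lC$ and observe that both $U$ and $U^{-1} = \lC^{-1}\lA$ lie in $\mZ^{n\times n}$. Since determinants of integer matrices are integers and $\det(U)\det(U^{-1}) = 1$, the only possibility is $\det(U)\in\{\pm 1\}$, so $U$ is unimodular by \cref{def:unimodular}, and $\lC = \lA U$ by construction. The converse direction (iii) $\Rightarrow$ (ii) is immediate: $\lA^{-1}\lC = U \in \mZ^{n\times n}$ and, since $U$ is unimodular, Cramer's rule gives $U^{-1} \in \mZ^{n\times n}$, whence $\lC^{-1}\lA = U^{-1} \in \mZ^{n\times n}$.

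For (iii) $\Rightarrow$ (iv), I would invoke \cref{thm:integer_normalforms}(i) to obtain a unimodular $V$ with $H = \lA V$ in HNF. If $\lC = \lA U$ with $U$ unimodular, then $\lC(U^{-1}V) = \lA V = H$, and $U^{-1}V$ is again unimodular because the set of unimodular integer matrices forms a group under multiplication. Thus $H$ is also the HNF of $\lC$, and by the uniqueness clause of \cref{thm:integer_normalforms} the HNFs coincide. For (iv) $\Rightarrow$ (iii) I would start from the common HNF $H$, writing $H = \lA V_1 = \lC V_2$ with $V_1, V_2$ unimodular, and conclude $\lC = \lA V_1 V_2^{-1}$, where $U := V_1 V_2^{-1}$ is unimodular by the same group property.

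I do not expect any real obstacle: every step is a short manipulation once the normal form machinery and \cref{lem:sublattice} are in hand. The only place requiring a bit of care is the determinant argument inside (ii) $\Rightarrow$ (iii), where both integrality of the inverse and unimodularity have to be deduced simultaneously from the two sublattice conditions, and the appeal to uniqueness in (iii) $\Rightarrow$ (iv), which is what actually makes the HNF a usable certificate of lattice equality.
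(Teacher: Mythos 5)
Your proposal is correct and complete: the reduction of (i)~$\Leftrightarrow$~(ii) to two applications of \cref{lem:sublattice}, the determinant argument $\det(U)\det(U^{-1})=1$ with both factors integral for (ii)~$\Rightarrow$~(iii), and the use of the uniqueness clause of \cref{thm:integer_normalforms} for (iii)~$\Leftrightarrow$~(iv) via the group structure of unimodular matrices are all sound, and the chain you describe does connect all four statements. There is nothing in the paper to compare against, however: the authors do not prove this theorem at all, but defer it (together with \cref{lem:sublattice} and \cref{thm:integer_normalforms}) to \cite{Schrijver:1986:TLI:17634} in a footnote, so your write-up is strictly more self-contained than the source. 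One caveat worth flagging, which concerns the theorem statement rather than your argument: your step (iii)~$\Rightarrow$~(iv) invokes \cref{thm:integer_normalforms}(i), which the paper only states for $\lA \in \mQ^{n\times n}$, whereas lattices are defined over arbitrary real primitive vectors; for genuinely irrational bases the HNF under unimodular column operations need not exist, so item (iv) implicitly presupposes a rationality (or at least HNF-existence) assumption that your proof inherits.
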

	
	Instead of analyzing infinite lattice/crystal structures, we limit ourselves to analyze finite dimensional periodic structures due to the fact that we are ultimately aiming to analyze finite dimensional problems. To this end, another helpful tool is the definition of crystal tori which are defined as quotient groups.
	\begin{definition}
		Let $\mL^\se(\lA )$ be a crystal and $\mL(\lC ) \subset \mL(\lA )$ be a sublattice. We define the \emph{crystal torus} $T^\se_{\lA ,\lC }$ by
		\begin{align*}
			T^\se_{\lA ,\lC }:=\bigslant{\mL^\se(\lA )}{\mL(\lC )}.
		\end{align*}
		For every $x+\se \in \mL^\se(\lA )$, their equivalence class $[x+\se]$ is in $T^\se_{\lA ,\lC }$. 
		Furthermore, the elements of $T^\se_{\lA ,\lC }$ are defined by the equivalence
		\begin{align*}[x+\se] = [y+\se] \quad \Longleftrightarrow \quad \text{there exists } z \in \mL(\lC), \text{ such that } x = y + z. %
		\end{align*}
	\end{definition}
	
	For theoretical and practical reasons, e.g., in~\cref{thm:wavefunction_ONB_vec,alg:elements_in_quotient_space},  it is necessary to be able to list all elements of a torus $T^\se_{\lA , \lA  M } = \set{[x]\in T^\se_{\lA , \lA  M } }{x\in \PT(\lA M) \cap \mL^\se(\lA)}$, $M \in \m\mZ^{n \times n}$, uniquely.

	To illustrate this point, consider for example an arbitrary lattice $\mL(\lA)$ with $\lA \in \mathbb{R}^{2\times 2}$, and the lattice torus %
	$T_{\lA , \lA  M }$ with \[
	M = \mat[c]{m_1 & m_2} =
	\mat[c]{
		2 & 3 \\ 2 & -2
	},\ m_{1},m_{2} \in \mathbb{Z}^{2},
	\]
	as depicted in~\cref{fig:HNF_LIST}. Even though we know that the quotient space consists of $|T_{\lA , \lA  M }|=|\operatorname{det}(M)|=10$ different elements, there is no apparent canonical list of these elements. Fortunately, a canonical ordering of the lattice points on a torus can be formulated using the Hermite or Smith normal form of $M$. %
	\begin{theorem}
		\label{cor:quotient_latticepointlist}
		Let $T_{\lA ,\lC }$ be arbitrary, i.e., $\lC  = \lA  M$ for some $M \in \m\mZ^{n \times n}$. \begin{itemize}
			\item Let $H \in\m\mZ^{n \times n}$ be the  HNF of $M$ with entries $H_{ij}$ (cf.~\cref{def:HNF}). Defining the index set $I=I_1 \times I_2 \times \ldots \times I_n$ by $I_\ell:=\{0,1,\ldots,H_{\ell\ell}-1\}$, we then obtain %
			\begin{align*}
				T^\se_{\lA ,\lC } & = \set{[x_j+\se]}{ x_j = \lA j,\ j \in I}
			\end{align*} %
			$\text{with } [x_j+\se] \neq [x_{j'}+\se] \ \Leftrightarrow\ j \neq j' \in I.$
			\item Let $S=U^{-1}MV \in \mZ^{n\times n}$ denote the Smith decomposition of $M$ with diagonal entries $s_{i}$ (cf.~\cref{def:SNF}) and unimodular matrices $U,V$. Defining the index set $\tilde{I}=\tilde{I}_1 \times \tilde{I}_2 \times \ldots \times \tilde{I}_n$ by $\tilde{I}_\ell:=\{0,1,\ldots,s_{\ell}-1\}$, we then obtain %
			\begin{align*}
				T^\se_{\lA ,\lC } & = \set{[x_j+\se]}{ x_j = \tilde{\lA} j,\ j \in \tilde{I}}\ 
			\end{align*} 
			$\text{with }\ [x_j+\se] \neq [x_{j'}+\se] \ \Leftrightarrow\ j \neq j' \in \tilde{I},$ where $\tilde{\lA} := \lA U$ denotes the altered lattice basis.
		\end{itemize}
		\begin{proof}
			Both statements are a direct consequence of the triangular or diagonal shape of the normal forms and \cref{thm:lattice_equiv_unimod}, i.e., lattices are not changed by unimodular column transformations. On the one hand we have
				$T^\se_{\lA ,\lC } = T^\se_{\lA ,\lA M } = T^\se_{\lA ,\lA H }.$ 
			The second statement follows from
			$\tilde{\lA} S = \lC V$ %
			and hence
				$T^\se_{\lA ,\lC } = T^\se_{\lA U , \lC V} = T^\se_{\tilde{\lA} ,\tilde{\lA} S }.$
		\end{proof}
	\end{theorem}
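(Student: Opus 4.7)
The plan is to reduce both claims to an enumeration of cosets of $\mL(\lC)$ inside $\mL(\lA)$, then to invoke \cref{thm:lattice_equiv_unimod} to replace the sublattice generator by a matrix in triangular (HNF) or diagonal (SNF) form, at which point the canonical representatives can be read off directly. The reduction is legitimate because $[x+\se] = [y+\se]$ in $T^\se_{\lA,\lC}$ holds exactly when $x-y \in \mL(\lC)$, so the structure element $\se$ is inert in both the counting and the distinctness parts of the argument; we are really enumerating the quotient $\mL(\lA)/\mL(\lC)$.

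For the HNF statement I would write $H = MU$ with $U$ unimodular (\cref{thm:integer_normalforms}), so that $\lA H = \lA M U = \lC U$ and therefore $\mL(\lA H) = \mL(\lC)$ by \cref{thm:lattice_equiv_unimod}. Then I would reduce an arbitrary $j \in \mZ^n$ modulo $\mL(\lA H)$ by back-substitution on the upper triangular $H$: pick $k_n \in \mZ$ so that $j_n - H_{nn} k_n \in \{0,\ldots,H_{nn}-1\}$, subtract $H k_n e_n$, and continue upward through rows $n-1, n-2, \ldots, 1$. This shows that every coset meets $\set{\lA j}{j \in I}$. For distinctness, $\lA j \equiv \lA j' \pmod{\mL(\lA H)}$ with $j, j' \in I$ yields $j - j' = H k$ for some $k \in \mZ^n$; the bottom row of this identity reads $j_n - j'_n = H_{nn} k_n$, and $|j_n - j'_n| < H_{nn}$ forces $k_n = 0$, after which an upward iteration gives $k = 0$ and hence $j = j'$.

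For the SNF statement, the relation $S = U^{-1} M V$ rearranges to $\tilde{\lA} S = \lA U S = \lA M V = \lC V$, and two applications of \cref{thm:lattice_equiv_unimod} yield both $\mL(\tilde{\lA} S) = \mL(\lC)$ and $\mL(\tilde{\lA}) = \mL(\lA)$; consequently $T^\se_{\lA,\lC} = T^\se_{\tilde{\lA}, \tilde{\lA} S}$. Since $S$ is diagonal, the quotient $\mL(\tilde{\lA}) / \mL(\tilde{\lA} S)$ decouples into $n$ independent cyclic quotients indexed by $j_\ell \in \{0,\ldots,s_\ell-1\}$, which simultaneously supplies the enumeration and the distinctness.

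The only genuinely delicate step is the HNF distinctness argument, because the triangular (rather than diagonal) form of $H$ requires the row-by-row elimination of $k$ from the bottom upward; the SNF case collapses to $n$ independent one-dimensional problems and is essentially automatic once the sublattice has been rewritten in terms of $\tilde{\lA}$. In view of this, the SNF formulation is the conceptually cleaner tool, while the HNF formulation has the virtue of leaving the original lattice basis $\lA$ untouched---a distinction that matters whenever one wants the representatives $x_j = \lA j$ to inherit the user-supplied basis.
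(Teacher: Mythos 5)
Your proposal is correct and follows the same route as the paper's own (very terse) proof: both reduce to the identities $T^\se_{\lA,\lC}=T^\se_{\lA,\lA H}$ and $T^\se_{\lA,\lC}=T^\se_{\tilde{\lA},\tilde{\lA}S}$ via unimodular column transformations and then read the representatives off the triangular, respectively diagonal, shape of the normal form. You merely make explicit the back-substitution and distinctness arguments that the paper leaves as ``a direct consequence,'' which is a faithful filling-in rather than a different method.
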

	In terms of the example of \cref{fig:HNF_LIST} we obtain:
	\begin{itemize}
		\item  The Hermite normal form $H$ of $M$ is given by 
		\[H= \mat[c]{h_{1} & h_{2}} = 
		\mat[c]{
			5 & 2 \\ 0 & 2},\ h_{1},h_{2} \in \mathbb{Z}^{2}
		.\] Thus, a unique list of all representatives of $T_{\lA ,\lA M}$ is given by 
		\begin{align*}
			T_{\lA ,\lA M}= T_{\lA ,\lA H} &= \set{[x]=[j_1 \la_1 + j_2 \la_2]}{(j_1,j_2) \in \{0,1,2,3,4\}\times\{0,1\} }. \\
		\end{align*}
		\item The Smith decomposition of $M$ is given by 
		\[S= \mat[c]{s_{1} & 0 \\ 0 & s_{2}} = 
		\mat[c]{
			1 & 0 \\ 0 & 10}
		=\mat[r]{
			1 & 0 \\ -4 & -1} M \mat[r]{
			-1 & -3 \\ 1 & 2}
		.\] Thus, another unique list of all representatives of $T_{\lA ,\lA M}$ is given by 
		$$T_{\lA ,\lA M}= T_{\tilde{\lA} , \tilde{\lA} S}=\set{[x]=[j_1 \tilde{\la}_1 + j_2 \tilde{\la}_2]}{(j_1,j_2) \in \{0\}\times\{0,1,\ldots,9\} },%
		$$  
		where $\tilde{\lA} = \mat{\tilde{\la}_1 & \tilde{\la}_2} = \lA \mat[r]{
			1 & 0 \\ -4 & -1}$.
	\end{itemize}
	All tori representations $T_{\lA ,\lA M}$, $T_{\lA ,\lA H}$
	and $T_{\tilde{\lA} , \tilde{\lA} S} $
	are depicted in~\cref{fig:HNF_LIST}.
	In the remainder we drop the bracket notation for reasons of readability.
	\begin{figure}
		\centering
		\resizebox{.8\textwidth}{!}{\includegraphics{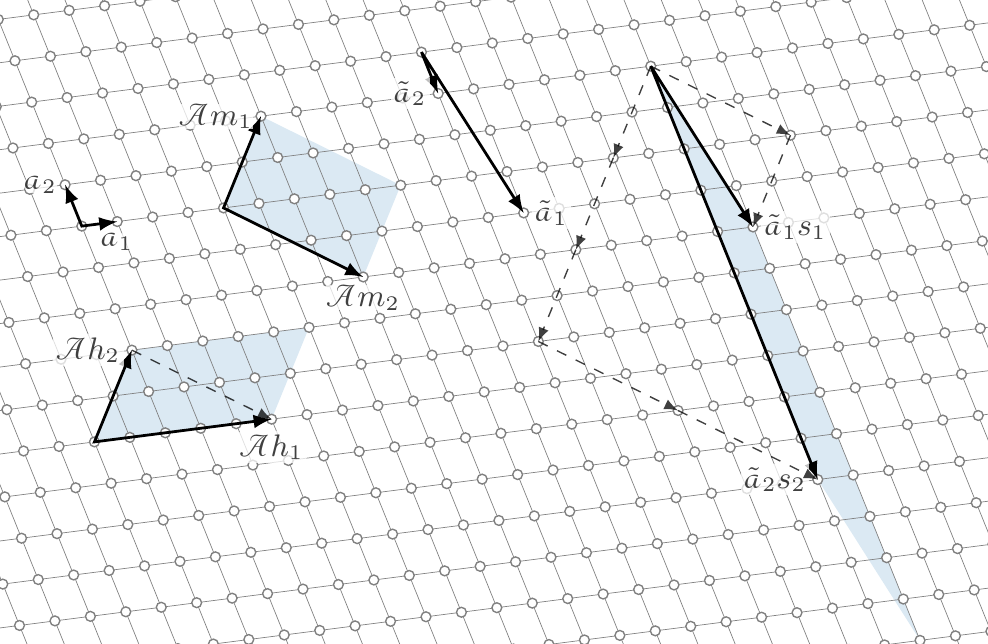}}
		\caption{The Hermite normal form $H$ and the Smith normal form $S$ of $M$ yield lattice bases which allow us to define a canonical lexicographic ordering of the lattice points of a crystal torus $T_{\lA,\lA M} = T_{\lA,\lA H} = T_{\tilde{\lA},\tilde{\lA} S}$.}
		\label{fig:HNF_LIST}
	\end{figure}

	\section{Operators on Crystals}\label{sec:operators}
	Now that the basic notation of the underlying structure is in place we introduce notation for value distributions and operators on these structures. For aforementioned reasons, we restrict ourselves to the finite dimensional setting by only considering quotient groups of lattices
	\begin{align*}
		T_{\lA,\lZ}^\se : = \bigslant{\mL^\se(\lA )}{\mL(\lZ )}
	\end{align*}
	with $\mL(\lZ  ) \subset \mL(\lA )$ being an arbitrary sublattice of $\mL(\lA )$. While it is significantly easier to be mathematically precise in this general finite setting than in an infinite setting it is also much closer to the targeted applications, namely finite dimensional approximations of PDEs and Hamiltonians. Eventually we only work with operators as part of numerical simulations, i.e., we face only a finite number of unknowns/lattice points anyway. The quotient group $T_{\lA,\lZ}^\se$, $\mL(\lZ) \subset \mL(\lA)$, precisely describes %
	such an arbitrarily large but finite torus. To shorten notation we use $T_{\lA}^\se$ instead of $T_{\lA,\lZ}^\se$ whenever we do not specify $\lZ$ explicitly.
	
	\begin{definition}
		A crystal-operator is a linear function
		\begin{align*}
			L : \fs( T_{\lA}^{\sed} ) \longrightarrow \fs( T_{\lA}^{\sec} ),
		\end{align*}
		where $T_{\lA}^{\sed}$ corresponds to the crystal of the domain and $T_{\lA}^{\sec}$ corresponds to the crystal of the codomain. The function spaces are defined by
		\begin{align*}
			\fs( T_{\lA}^{{\sel}} ) = \set{f=(f_1,\ldots,f_{|\sel|}) }{T_{\lA}\longrightarrow \mC^{|{{\sel}}|}}.
		\end{align*}
		A value $f_j(x)$, $x\in\mL(\lA)$, corresponds to a value at the position $x + \se_j$.
		The function space is equipped with the scalar product
		$$\ip[]{f}{g} := \frac{1}{|T_{\lA ,\lZ }|} \sum_{x\in T_{\lA ,\lZ }} \ip{f(x)}{g(x)},$$
		where $\ip{f(x)}{g(x)} := \sum_{\ell=1}^{|\sel|} f_\ell(x) \overline{g_\ell(x)}$ denotes the Euclidean scalar product on $\mC^{|\sel|}$.
	\end{definition}
	
	In the context of LFA we are interested in operators which can be represented in (block) stencil notation. That is, translationally invariant operators that can be written as multiplication operators. As these two properties are in fact equivalent (cf.~\cite[Theorem 3.16]{nla.cat-vn1897929}), we can connect those operators to the notation of crystal structures.
	
	\begin{theorem}
		Let $L : \fs( T_{\lA}^{\sed} ) \longrightarrow \fs( T_{\lA}^{\sec} )$ be a crystal operator. The following statements are equivalent.
		\begin{enumerate}
			\item $L$ is a multiplication operator, i.e., there exist matrices $m_L^{(y)} \in \mC^{|{\sec}|\times |{\sed}|}$ such that for each $x\in T_\lA$ and $f\in \fs( T_{\lA}^{\sed} ) $ we have
			\begin{align*}
				(Lf)(x) = \sum_{y \in T_\lA} m_L^{(y)} f(x+y).
			\end{align*}
			\item $L$ is $(\lA)$-translationally invariant, i.e.,
			$$\lop\trop_\la  - \trop_\la \lop = 0 \quad \text{for all (primitive) vectors } \la \in \mL(\lA  ), $$
			where the translation operator is defined by $(\trop_\la f)(x) = f(x+\la)$.
		\end{enumerate}
	\end{theorem}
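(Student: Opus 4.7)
The plan is to verify the equivalence by showing (1) $\Rightarrow$ (2) directly, and then reconstructing the stencil matrices $m_L^{(y)}$ from the action of $L$ on delta functions for the reverse direction.

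For (1) $\Rightarrow$ (2), I would simply plug the multiplication formula into both sides. For any $\la \in \mL(\lA)$ and $f\in\fs(T_\lA^{\sed})$,
\begin{align*}
(L\trop_\la f)(x) = \sum_{y\in T_\lA} m_L^{(y)} (\trop_\la f)(x+y) = \sum_{y\in T_\lA} m_L^{(y)} f(x+y+\la) = (Lf)(x+\la) = (\trop_\la L f)(x),
\end{align*}
so $L$ commutes with every $\trop_\la$, in particular for primitive $\la$.

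For (2) $\Rightarrow$ (1), I would proceed in three short steps. \textbf{Step A:} Since every $\la\in\mL(\lA)$ is an integer combination of primitive vectors and $\trop_{\la+\la'}=\trop_\la\trop_{\la'}$, commutation with primitive translations extends to commutation with $\trop_\la$ for all $\la\in\mL(\lA)$. \textbf{Step B:} Introduce the canonical basis of $\fs(T_\lA^{\sed})$ given by the "shifted deltas" $\delta_{z,j}$ defined by $(\delta_{z,j})_k(u)=\delta_{jk}\delta_{zu}$, so that any $f$ admits the expansion $f=\sum_{z\in T_\lA}\sum_{j=1}^{|\sed|} f_j(z)\,\delta_{z,j}$. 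Observe that $\delta_{z,j}=\trop_{-z}\delta_{0,j}$, and hence by Step A,
\begin{align*}
L\delta_{z,j} = L\trop_{-z}\delta_{0,j} = \trop_{-z}L\delta_{0,j}, \qquad (L\delta_{z,j})(x) = (L\delta_{0,j})(x-z).
\end{align*}
\textbf{Step C:} Define $m_L^{(y)}\in\mC^{|\sec|\times|\sed|}$ as the matrix whose $j$-th column is the vector $(L\delta_{0,j})(-y)\in\mC^{|\sec|}$. Substituting into the expansion of $f$, rearranging indices via $y=x-z$, and applying linearity,
\begin{align*}
(Lf)(x) = \sum_{z,j} f_j(z)\,(L\delta_{0,j})(x-z) = \sum_{y\in T_\lA} m_L^{(y)} f(x+y),
\end{align*}
which is the desired multiplication form.

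I do not anticipate a real obstacle; the proof is essentially bookkeeping. The one point that requires care is the sign convention in the shift: the definition $(\trop_\la f)(x)=f(x+\la)$ forces $\trop_u\delta_{z,j}=\delta_{z-u,j}$, which in turn fixes the argument $-y$ in the definition of $m_L^{(y)}$. Getting this convention consistent between the two directions is the only subtle piece, and once it is fixed everything collapses to a direct computation on the finite torus $T_{\lA,\lZ}$, with no convergence issues to worry about.
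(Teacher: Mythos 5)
Your proof is correct in substance, and it is worth pointing out that the paper itself does not prove this theorem at all: it defers to Theorem~3.16 of a cited reference, so there is no in-paper argument to compare against. Your argument is the standard self-contained one that fills this gap. The direction $(1)\Rightarrow(2)$ is indeed immediate by substitution. For $(2)\Rightarrow(1)$, your three steps are sound: commutation with the primitive translations extends to all of $\mL(\lA)$ because the $\trop_\la$ form a group generated by the primitive translations and their inverses; the delta functions $\delta_{z,j}$ form a genuine (finite) basis of $\fs(T_{\lA,\lZ}^{\sed})$ since the torus is finite, so the expansion of $f$ and the interchange of finite sums are unproblematic; and the multipliers are correctly read off from $L\delta_{0,j}$. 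The one slip is the change of summation index in Step~C: with your convention that the $j$-th column of $m_L^{(y)}$ is $(L\delta_{0,j})(-y)$, the substitution must be $y=z-x$ (so that $f(x+y)=f(z)$ and $-y=x-z$), not $y=x-z$ as written; the final displayed identity is nevertheless the correct one, so this is a typo rather than a gap. It would also be worth a sentence confirming that $\trop_u\delta_{z,j}=\delta_{z-u,j}$, which you implicitly use and which is forced by the convention $(\trop_\la f)(x)=f(x+\la)$, exactly as you note.
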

	
	For the analysis of such operators the concept of the dual lattice comes in handy as already considered in similar form in~\cite{GaspGracLisb2009,ZhouFult}.
	\begin{definition}\label{def:dual_lattice}
		Let $\mL(\lA )$ be a lattice. Its \emph{dual lattice} $\mL(\lB)=\mL(\lA )^*$ is the set
		\begin{align*}
			\mL(\lA )^*:=\set{k \in \mR^n}{ \ip{k}{x} \in \mZ \text{ for all } x\in\mL}.
		\end{align*}
		A lattice basis of the dual lattice is given by $\lB = \lA^{-T}$. The elements of $\mL(\lA )^*$ may also be referred to as \emph{wave vectors}.
	\end{definition}
	
	In addition to the dual space we introduce an orthonormal basis of wave functions that are compatible with the crystal structure introduced in~\cref{sec:lattices_crystals}. This basis is an extension of the basis used in~\cite{Kuo1989} to analyze the red-black Gauss-Seidel relaxation. Furthermore, similar basis functions have been used in the context of LFA, e.g. in~\cite{BoonLentVand2008, 1803.08864, RittichBolten2018, 1902.10248, Brown2018LOCALFA}.
	\begin{theorem}
		\label{thm:wavefunction_ONB_vec}
		An orthonormal basis for the function space  $\fs(T^\se_{\lA ,\lZ })$ %
		with a structure element $\se=(\se_1,\ldots,\se_m)$ is given by the wave functions %
		$e_{1,k},e_{2,k},\ldots,e_{m,k}$ defined by
		\begin{align*}
			\left(e_{\ell,k}(x)\right)_j := 
			\begin{cases}
				\cexp{k}{x} & \text{ if }j=\ell, %
				\\ 0 & \text{ else,} 
			\end{cases}  
		\end{align*}
		with $k \in T_{\lA ,\lZ }^*:= \bigslant{\mL(\lZ)^*}{\mL(\lA)^*} $.
	\end{theorem}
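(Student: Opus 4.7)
The plan is to check three items: well-definedness of the wave functions on the quotients involved, a dimension count matching the size of the function space, and orthonormality with respect to the given inner product. The computation itself will reduce to a character sum on the finite abelian group $T_{\lA,\lZ}$.

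First I would address well-definedness. Because $\mL(\lZ)\subset\mL(\lA)$, the dual inclusion reverses to $\mL(\lA)^*\subset\mL(\lZ)^*$, so that $T_{\lA,\lZ}^*$ is a genuine quotient. If $k\sim k'\pmod{\mL(\lA)^*}$, then by Definition~\ref{def:dual_lattice} we have $\ip{k-k'}{x}\in\mZ$ for every $x\in\mL(\lA)$, hence $e^{2\pi i\ip{k}{x}}=e^{2\pi i\ip{k'}{x}}$; symmetrically, for $x\sim x'\pmod{\mL(\lZ)}$ and any $k\in\mL(\lZ)^*$, the same definition applied to $\mL(\lZ)^*$ yields $e^{2\pi i\ip{k}{x}}=e^{2\pi i\ip{k}{x'}}$. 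Thus the defining formula depends only on the classes of $k$ and $x$.

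Next I would handle the count. Writing $\lZ=\lA M$ with $M\in\mZ^{n\times n}$ (which exists by Lemma~\ref{lem:sublattice}), I have $|T_{\lA,\lZ}|=|\det M|$. On the dual side, $\mL(\lA)^*$ has covolume $|\det\lA|^{-1}$ and $\mL(\lZ)^*$ has covolume $|\det\lA M|^{-1}$, so the index $[\mL(\lZ)^*:\mL(\lA)^*]$ equals the ratio of covolumes, namely $|\det M|=|T_{\lA,\lZ}|$. Hence the total number $m\cdot|T_{\lA,\lZ}^*|$ of wave functions matches the dimension $m\cdot|T_{\lA,\lZ}|$ of $\fs(T_{\lA,\lZ}^{\se})$, so orthonormality will automatically upgrade to being a basis.

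Finally, for orthonormality I would compute directly. Because $e_{\ell,k}(x)$ is supported only in its $\ell$-th component, the pointwise Euclidean inner product collapses to $\delta_{\ell,\ell'}e^{2\pi i\ip{k-k'}{x}}$, yielding
$$\ip[]{e_{\ell,k}}{e_{\ell',k'}}=\frac{\delta_{\ell,\ell'}}{|T_{\lA,\lZ}|}\sum_{x\in T_{\lA,\lZ}}e^{2\pi i\ip{k-k'}{x}}.$$
When $k=k'$ in $T_{\lA,\lZ}^*$, each term is $1$, and the sum returns $\delta_{\ell,\ell'}$. When $k\neq k'$ in $T_{\lA,\lZ}^*$, by the definition of the dual lattice there exists $y\in\mL(\lA)$ with $\ip{k-k'}{y}\notin\mZ$; the reindexing $x\mapsto x+y$ on the finite abelian group $T_{\lA,\lZ}$ then shows the sum equals itself multiplied by the non-unit factor $e^{2\pi i\ip{k-k'}{y}}$, so it vanishes. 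This is the classical orthogonality of characters on a finite abelian group. The main obstacle I anticipate is not conceptual but bookkeeping-heavy: keeping straight the four lattices $\mL(\lZ)\subset\mL(\lA)$ and $\mL(\lA)^*\subset\mL(\lZ)^*$, and verifying carefully that the character $x\mapsto e^{2\pi i\ip{k}{x}}$ descends to a well-defined character on $\mL(\lA)/\mL(\lZ)$. Once that is in place, the covolume identity delivers the count and character orthogonality delivers the inner products.
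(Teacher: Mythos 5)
Your proof is correct and complete, but it takes a genuinely different route from the paper's. The paper's (one-sentence) proof is computational: it assumes without loss of generality that $\lA^{-1}\lZ$ is in Smith normal form, so that \cref{cor:quotient_latticepointlist} parametrizes $T_{\lA,\lZ}$ (and its dual) by a product of index sets $\{0,\ldots,s_\ell-1\}$, whereupon the inner-product sum factors into one-dimensional geometric sums, each of which vanishes by the geometric sum formula whenever the relevant root of unity differs from $1$. You instead invoke abstract character orthogonality on the finite abelian group $T_{\lA,\lZ}$ via the translation trick $x\mapsto x+y$ with $\ip{k-k'}{y}\notin\mZ$, which sidesteps the normal form entirely in the orthogonality computation; the price is that you must separately establish the cardinality identity $|T_{\lA,\lZ}^*|=|T_{\lA,\lZ}|=|\det M|$ to upgrade the orthonormal set to a basis, which you do correctly via the index/covolume computation $\lA^{-T}=\lZ^{-T}M^T$ (in the paper's approach this count falls out of the Smith normal form for free, since both tori are listed over the same index set). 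Your explicit well-definedness check --- that $x\mapsto\cexp{k}{x}$ descends to a character of $\mL(\lA)/\mL(\lZ)$ and depends on $k$ only modulo $\mL(\lA)^*$ --- is a point the paper glosses over and is worth making explicit. Both arguments are sound; yours is shorter and more conceptual, while the paper's reuses the normal-form listing machinery that is needed elsewhere in the framework anyway.
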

	\begin{proof}
		The statement follows by a straightforward, but lengthy calculation, by assuming without loss of generality that $\lA^{-1}\lZ$ is given in Smith normal form, making use of \cref{cor:quotient_latticepointlist} and the geometric sum formula.
	\end{proof}
	An illustration of a lattice torus $T_{\lA,\lZ}$ along with its dual $T_{\lA,\lZ}^*$ is given in~\cref{fig:TorusAndDual}.
		\begin{figure}
		\resizebox{\textwidth}{!}{
			\resizebox{!}{4cm}{
				\includegraphics{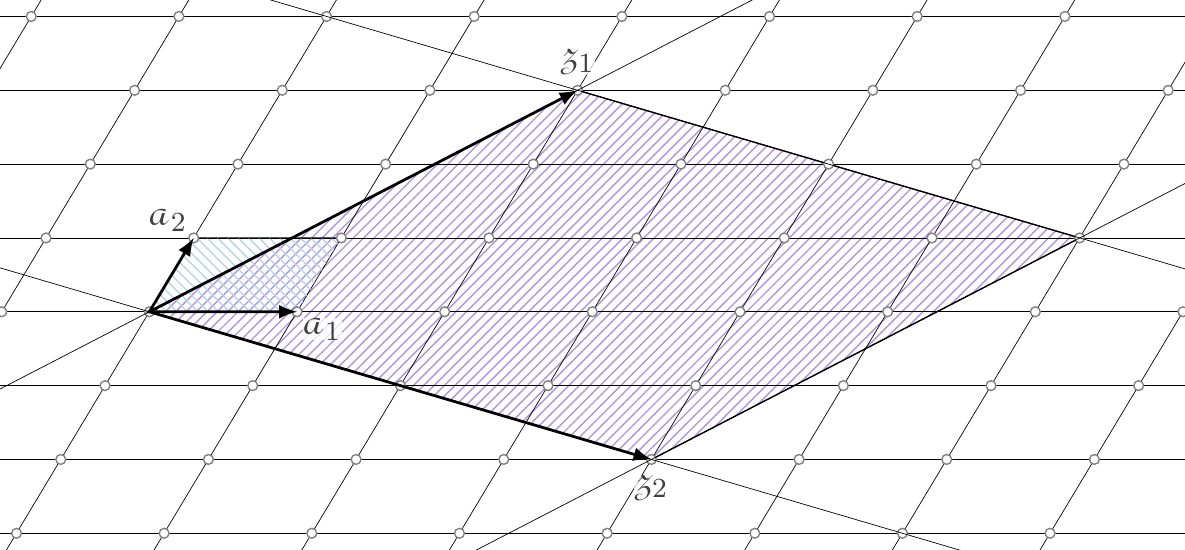}
			}
						\resizebox{!}{4cm}{
				\includegraphics{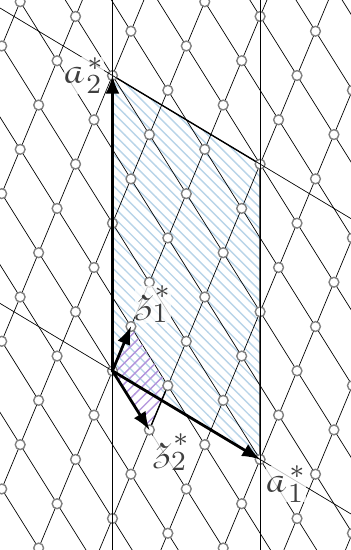}
			}
		}
		\caption{A lattice torus $T_{\lA,\lZ}$ (left) and its dual torus $T_{\lA,\lZ}^*$ (right). In here, the lattices bases are denoted by $\lA = \mat{\la_1 & \la_2},\ \lZ = \mat{\lz_1 & \lz_2}$ and $\lA^{-T} = \mat{\la_1^* & \la_2^*},\ \lZ^{-T} = \mat{\lz_1^* & \lz_2^*}$.\label{fig:TorusAndDual}}
	\end{figure}
	
	The orthonormal basis of~\cref{thm:wavefunction_ONB_vec} can be split into subsets with respect to the wave vector $k$, i.e.,
	$\fs(T^\se_{\lA ,\lZ }) = \cup_{k \in T_{\lA,\lZ}^*} \operatorname{span}(H_k) $ with
	\begin{equation}\label{eq:space-of-harmonics}
		H_k = \operatorname{span}\set{e_{\ell,k}}{\ell=1,\ldots,m}.
	\end{equation}
	
	\begin{theorem}\label{thm:L-invariance}
		Let $\lop : \fs(T_{\lA  ,\lZ }^\se) \rightarrow \fs(T_{\lA  ,\lZ }^\se)$ be a multiplication operator. %
		Then the subspaces $H_k$ of~\cref{eq:space-of-harmonics} are $L$-invariant, i.e., $L(H_k) \subseteq H_k$.
	\end{theorem}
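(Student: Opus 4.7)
The plan is to verify $L$-invariance by direct computation on the basis vectors $e_{\ell,k}$ of $H_k$, exploiting the multiplication-operator representation from the preceding theorem together with the factorization property of the exponential under translation. Since $H_k = \operatorname{span}\{e_{1,k},\ldots,e_{m,k}\}$ and $L$ is linear, it suffices to show $L e_{\ell,k} \in H_k$ for each $\ell \in \{1,\ldots,m\}$.

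First I would write $L$ in its multiplication form, $(Lf)(x) = \sum_{y \in T_\lA} m_L^{(y)} f(x+y)$ with blocks $m_L^{(y)} \in \mC^{|\se| \times |\se|}$. Applying this to $e_{\ell,k}$ and reading off the $j$-th component via the defining property $(e_{\ell,k}(x))_{j'} = \delta_{j'\ell}\, \cexp{k}{x}$, only the $\ell$-th column of each block $m_L^{(y)}$ survives. This yields
\begin{align*}
(L e_{\ell,k})(x)_j \;=\; \sum_{y \in T_\lA} (m_L^{(y)})_{j\ell}\, \cexp{k}{x+y} \;=\; \cexp{k}{x}\, \underbrace{\sum_{y \in T_\lA} (m_L^{(y)})_{j\ell}\, \cexp{k}{y}}_{=:\, c_{j\ell}(k)},
\end{align*}
where the key step is pulling the $x$-dependent exponential out via $\ip{k}{x+y} = \ip{k}{x} + \ip{k}{y}$.

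Hence $(L e_{\ell,k})(x)_j = c_{j\ell}(k)\, \cexp{k}{x}$, so that $L e_{\ell,k} = \sum_{j=1}^{m} c_{j\ell}(k)\, e_{j,k}$, which lies in $H_k$ by definition. Invariance then follows by linearity.

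I do not anticipate a substantial obstacle: the only minor subtlety is that the sum $\sum_{y \in T_\lA}$ is over a quotient torus, so one should confirm that the expression $\cexp{k}{y}$ is well defined on equivalence classes $[y] \in T_{\lA,\lZ}$ when $k \in T_{\lA,\lZ}^*$. This is immediate from \cref{def:dual_lattice}: shifting $y$ by any $z \in \mL(\lZ)$ changes $\ip{k}{y}$ by an integer when $k \in \mL(\lZ)^*$, and the exponential is unchanged. The coefficients $c_{j\ell}(k)$ therefore depend only on the class $[k] \in T_{\lA,\lZ}^*$, so the block $(c_{j\ell}(k))_{j,\ell}$ is the symbol of $L$ at the wave vector $k$ and the argument is complete.
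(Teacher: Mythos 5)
Your proposal is correct and follows essentially the same route as the paper: apply the multiplication-operator representation to elements of $H_k$ and factor out $\cexp{k}{x}$ via $\ip{k}{x+y}=\ip{k}{x}+\ip{k}{y}$, identifying the resulting coefficient matrix as the symbol $L_k$. The only cosmetic difference is that you compute componentwise on the basis vectors $e_{\ell,k}$ while the paper works with an arbitrary linear combination directly; your added remark on well-definedness over the quotient is a harmless bonus.
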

	\begin{proof} Let $e_{k}(x)$ denote an arbitrary value distribution in $H_{k}$. That is, there exist $\alpha_{1},\ldots,\alpha_{m} \in \mC$ such that
		\begin{align*}
			e_k(x)=\sum_{\ell}\alpha_\ell e_{\ell,k} = 
			(\alpha_1 \cexp{k}{x},\ldots,\alpha_m\cexp{k}{x})^T \in \operatorname{span}(H_k).
		\end{align*}
		Then we obtain by direct calculation
		\begin{equation}\label{eq:symbol}
			(L e_k)(x)  = \sum_{y} m_L^{(y)} e_k(x+y) = \left(\sum_{y} m_L^{(y)} \cexp{k}{y}\right) e_k(x).
		\end{equation}
	\end{proof}
	
	Due to their $L$-invariance the subspaces $H_k$ are oftentimes referred to as \emph{spaces of harmonics}. %
	Thus we can easily represent any $\lA$-translationally operator via its \emph{symbols}, which are formally defined as follows.
	\begin{definition}
		\label{def:crystal_op_symbol}
		Let $L:\fs(T_{\lA  ,\lZ }^\se) \rightarrow \fs(T_{\lA  ,\lZ }^\st)$ be a multiplication operator with
		\begin{align*}
			(Lf)(x) = \displaystyle\sum_{y \in T_{\lA  ,\lZ }} m_L^{(y)} f(x+y),\  m_L^{(y)} \in \mC^{|\st|\times |\se|}.
		\end{align*}
		We define the \emph{symbol} of $L$ according to~\cref{eq:symbol} by
		\begin{align*}
			L_k:=\sum_{y \in T_{\lA  ,\lZ }} m_L^{(y)} m_k^{(y)}
			\text{ with }
			m_k^{(y)} :=  \cexp{k}{y}. %
		\end{align*}
	\end{definition}
	In case $\se = \st$ the spectrum of $L$ can then be extracted from its symbols $L_{k}$.
	
	\begin{theorem}\label{cor:eigenvalues_crystal_operator}
		Let $L:\fs(T_{\lA  ,\lZ }^\se) \rightarrow \fs(T_{\lA  ,\lZ }^\se)$ be a multiplication operator with
		\begin{align*}
			(Lf)(x) = \sum_{y \in T_{\lA  ,\lZ }} m_L^{(y)} f(x+y),\ m_L^{(y)} \in \mC^{|\se|\times |\se|}.
		\end{align*}
		Then $\spec(L) = \cup_{k\in T_{\lA,\lZ}^*} \spec(L_k)$.
	\end{theorem}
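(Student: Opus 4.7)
My plan is to exploit the block-diagonalization of $L$ provided by the wave function basis and reduce the spectral statement to a direct sum decomposition.

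First, I would invoke \cref{thm:wavefunction_ONB_vec} to write $\fs(T^\se_{\lA,\lZ}) = \bigoplus_{k \in T^*_{\lA,\lZ}} H_k$ as an orthogonal direct sum, where each $H_k$ has dimension $m = |\se|$ and basis $\{e_{1,k},\ldots,e_{m,k}\}$. By \cref{thm:L-invariance}, each $H_k$ is $L$-invariant, so with respect to the ordered basis obtained by collecting the $e_{\ell,k}$ across all $k$, the matrix of $L$ is block-diagonal with one block per wave vector $k$.

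Next, I would identify each block with the symbol $L_k$. For a generic element $e_k = \sum_{\ell=1}^m \alpha_\ell e_{\ell,k} \in H_k$, the computation already carried out in~\eqref{eq:symbol} in the proof of \cref{thm:L-invariance} yields $(L e_k)(x) = L_k\, e_k(x)$, where $L_k \in \mC^{|\se|\times|\se|}$ is exactly the symbol from \cref{def:crystal_op_symbol} acting on the coefficient vector $(\alpha_1,\ldots,\alpha_m)^T$. Under the isometric identification $H_k \cong \mC^m$ via coefficients, the restriction $L|_{H_k}$ is therefore represented precisely by the matrix $L_k$, so $\spec(L|_{H_k}) = \spec(L_k)$.

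Finally, the spectrum of a block-diagonal operator is the union of the spectra of its blocks, which gives
\begin{equation*}
\spec(L) \;=\; \bigcup_{k \in T^*_{\lA,\lZ}} \spec(L|_{H_k}) \;=\; \bigcup_{k \in T^*_{\lA,\lZ}} \spec(L_k),
\end{equation*}
as claimed. I do not anticipate a significant obstacle here: the decomposition is orthogonal and finite-dimensional, and the algebraic identification of $L|_{H_k}$ with the symbol matrix $L_k$ is immediate from the direct calculation in~\eqref{eq:symbol}. The only minor point that would require a line of care is making explicit that the coefficient isomorphism $H_k \to \mC^m$ is basis-preserving so that the matrix representation indeed coincides with $L_k$ rather than some conjugate of it; this follows because the prefactor $\cexp{k}{x}$ factors out uniformly across all $m$ components of $e_k(x)$.
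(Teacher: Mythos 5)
Your proof is correct and follows essentially the same route as the paper, whose own proof is a one-line appeal to exactly the two ingredients you use: the orthonormality of the wave-function basis from \cref{thm:wavefunction_ONB_vec} and the $L$-invariance of the spaces $H_k$ from \cref{thm:L-invariance}, combined with the identification $(Le_k)(x) = L_k e_k(x)$ from~\eqref{eq:symbol}. You have simply written out the block-diagonalization argument that the paper leaves implicit.
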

	\begin{proof} Follows immediately due to the orthonormality of the basis $e_{\ell,k}$ (cf.~\cref{thm:wavefunction_ONB_vec}) and the $L$-invariance of the subspaces $H_{k}$ (cf.~\cref{thm:L-invariance}).
	\end{proof}
	\begin{remark}\label{rem:freq_sampling}
		The main purpose of $\lZ$, i.e., the set of primitive vectors that define an arbitrary sublattice $\mL(\lZ)$ of $\mL(\lA)$, is to simplify the theory developed in~\cref{sec:operators} by turning an infinite dimensional setting to an (arbitrarily large) finite one. Though there is another interpretation to it as well. In~\cref{cor:eigenvalues_crystal_operator} $\lZ$ explicitly specifies the resolution of the frequency space as seen in~\cref{fig:TorusAndDual}, i.e.,  $$T_{\lA,\lZ}^* = \mL(\lZ^{-T}) \cap \PT(\lA^{-T}),$$
		where the spectrum of the multiplication operator is sampled. Due to the reciprocal nature of the dual space, the larger $|\operatorname{det}(\lZ)|$ is, the finer the resolution becomes.%
	\end{remark}
	
	With these tools at hand we are able to fully analyze a single multiplication operator, but typically we are interested in analyzing a composition of several operators using LFA. As long as the corresponding domains and codomains of these operators are compatible we can use the rules of computation given in \cref{sec:rules_of_comp} on the level of multiplication operators and/or on the level of the corresponding symbols. While computing the sum, a product and taking the transpose can easily be done on both levels, taking the (pseudo-)inverse is simple only on the level of symbols. The (pseudo-)inverse of a multiplication operator may have an arbitrarily large number\footnote{Bounded by the number of lattice points on the (arbitrarily large) torus.} of multipliers $m_{L^{-1}}^{(y)} \neq 0$ and thus there is no simple rule to compute it. In case a pseudo-inverse has to be used in the following we opt to employ the Moore-Penrose pseudo-inverse, which we denote by $S^{\dagger}$ for a multiplication operator $S$.
	
	In our framework, which tries to automate as much of the LFA as possible, we would like to allow for user friendly descriptions of all occurring operators. That is, it should be possible to describe operators in terms of their individual translational invariance and ordering of the structure element without having to worry about compatibility issues with other operators on the input level of the analysis. The process how to automatically make crystal representations of operators compatible is explained in detail in~\cref{sec:crystal_reps_and_isos}. Before diving into the gritty details of this automation process we would like to illustrate the developments made so far with an example in order to convey the introduced notation.

	\section{An example}\label{sec:example}
	Consider the red-black Gauss-Seidel method applied to the discretized Laplacian on the unit square with periodic boundary conditions. The most fundamental representation of the discretized unit square with periodic boundary conditions is given by the torus $T_{\lA,\lZ}$ with
	\begin{align*}
		\lZ=\mat[c]{1 & 0 \\ 0 & 1}, \lA=\mat[c]{\la_1 & \la_2}=\frac{1}{h} \mat[c]{1 & 0 \\ 0 & 1}\text{ for some }h=\frac{1}{N}, N\in\mN.
	\end{align*}
	Then, the discretized Laplacian $L_h : \fs( T_{\lA,\lZ}^{(0)} ) \longrightarrow \fs( T_{\lA,\lZ}^{(0)} )$ using finite differences is given by
	$(L_h f)(x) := \sum_{y\in\mL(\lA)} m_{L_h}^{(y)} f(x+y)$ with non-zero multipliers:
	\begin{center}
		\begin{tikzpicture}
			\pgfmathsetmacro{\hshift}{1.25}
			\pgfmathsetmacro{\vshift}{1}
			
			\node (c) at (0,0) {$=$}; %
			\node[left=-.1cm of c, anchor=east] {$m_{L_h}^{(0)}$};
			\node[right=-.1cm of c, anchor=west] {$\frac{4}{h^2}$};
			
			\node (c) at (0,\vshift) {$=$}; %
			\node[left=-.1cm of c, anchor=east] {$m_{L_h}^{(\la_2)}$};
			\node[right=-.1cm of c, anchor=west] {$-\frac{1}{h^2}$};
			
			\node (c) at (0,-\vshift) {$=$}; %
			\node[left=-.1cm of c, anchor=east] {$m_{L_h}^{(-\la_2)}$};
			\node[right=-.1cm of c, anchor=west] {$-\frac{1}{h^2}$};
			
			\node (c) at (2.2*\hshift,0) {$=$}; %
			\node[left=-.1cm of c, anchor=east] {$m_{L_h}^{(\la_1)}$};
			\node[right=-.1cm of c, anchor=west] {$-\frac{1}{h^2}$};
			
			\node (c) at (-1.9*\hshift,0) {$=$}; %
			\node[left=-.1cm of c, anchor=east] {$m_{L_h}^{(-\la_1)}$};
			\node[right=-.1cm of c, anchor=west] {$-\frac{1}{h^2}$};
		\end{tikzpicture}
	\end{center}

	The error propagator $G$ of the red-black Gauss-Seidel method can be written with respect to the crystal $T_{\lA,\lZ}^{(0)}$ via
	\begin{align*}
		G=(I-S_b^\dagger L_h)(I-S_r^\dagger L_h) : \fs( T_{\lA,\lZ}^{(0)} ) \longrightarrow \fs( T_{\lA,\lZ}^{(0)} )
	\end{align*}
	with
	\begin{align*}
		(S_r f)(x) :=
		\begin{cases}
			\frac{4}{h^2} f(x) & x\in X_\text{red}   \\
			0                  & x\in X_\text{black}
		\end{cases}
		\text{ and } (S_b f)(x) :=
		\begin{cases}
			0                  & x\in X_\text{red}   \\
			\frac{4}{h^2} f(x) & x\in X_\text{black}
		\end{cases}
		,
	\end{align*}
	where $X_\text{red}$ and $X_\text{black}$ correspond to the red and black unknowns of the torus $T_{\lA,\lZ}^{(0)}$ as illustrated in \cref{fig:redblack_laplacian}, b). In order to analyze this composite operator in our framework, we write all occurring operators as multiplication operators. It is now important that the red-black splitting of $T_{\lA,\lZ}^{(0)} = X_{\text{red}} \cup X_{\text{black}}$ implies the crystal $ T_{\lC,\lZ}^\se$  with 
	\begin{align*}
		\lc_1 = \la_1 + \la_2, \text{ } \lc_2 = \la_1 - \la_2, \ \se := (0, \la_1)  = \mL(\lA) \cap \PT(\lC)
	\end{align*} such that $T_{\lC,\lZ}^{(0)} =  X_{\text{red}}$,  $T_{\lC,\lZ}^{(\la_1)} = X_{\text{black}}$  and $T_{\lA,\lZ}^{(0)} = X_{\text{red}} \cup X_{\text{black}} \cong T_{\lC,\lZ}^\se$ (cf.~\cref{fig:redblack_laplacian}, b)).
	With respect to this crystal the operators $S_r$ and $S_b$ can be written as multiplication operators
	$\hat{S}_r, \hat{S}_b : \fs( T_{\lC,\lZ}^{\se} ) \longrightarrow \fs( T_{\lC,\lZ}^{\se} )$
	with
	\begin{align*}
		(\hat{S}_r f)(x) = \mat[c]{\frac{4}{h^2} & 0 \\ 0 & 0} f(x) \text{ and } (\hat{S}_b f)(x) = \mat[c]{0 & 0 \\ 0 & \frac{4}{h^2}} f(x).
	\end{align*}
	
	We now have described all individual operators of $G$, each one defined with respect to its own (minimal) translational invariance, but the domains and codomains are not identical, i.e., $\lA \neq \lC$, such that we cannot directly use the computation rules described in \cref{sec:rules_of_comp}. In order to carry on with the analysis we have to rewrite the operators with respect to a common crystal structure. In this example we construct this structure by hand, but this process can be automated as explained in~\cref{sec:crystal_reps_and_isos}.
	
	As the crystal $T_{\lC,\lZ}^\se$ is yet another representation of $T_{\lA,\lZ}^{(0)}$, the operator $L_h$ can be rewritten with respect to this crystal (cf.~\cref{fig:redblack_laplacian}) as
	$ \hat{L}_h: \fs( T_{\lC,\lZ}^{\se} ) \longrightarrow \fs( T_{\lC,\lZ}^{\se} )$ with $(\hat{L}_h f)(x) := \sum_{y\in\mL(\lC)} m_{\hat{L}_h}^{(y)} f(x+y)$ and non-zero multipliers:
	\begin{center}
		\resizebox{.99\linewidth}{!}{
			\begin{tikzpicture}
				
				\pgfmathsetmacro{\hshift}{2.2}
				\pgfmathsetmacro{\vshift}{1}
				
				\node (c) at (0,0) {$=$}; %
				\node[left=-.1cm of c, anchor=east] {$m_{\hat{L}_h}^{(0)}$};
				\node[right=-.1cm of c, anchor=west] {$\frac{1}{h^2}\mat[r]{ 4 & -1 \\ -1 & 4}$};
				
				\node (c) at (-\hshift,\vshift) {$=$}; %
				\node[left=-.1cm of c, anchor=east] {$m_{\hat{L}_h}^{(-\lc_2)}$};
				\node[right=-.1cm of c, anchor=west] {$\frac{1}{h^2} \mat[r]{ 0 & 0 \\ -1 & 0}$};
				
				\node (c) at (\hshift,\vshift) {$=$}; %
				\node[left=-.1cm of c, anchor=east] {$m_{\hat{L}_h}^{(\lc_1)}$};
				\node[right=-.1cm of c, anchor=west] {$\frac{1}{h^2}  \mat[r]{ 0 & -1 \\ 0 & 0}$};
				
				\node (c) at (-\hshift,-\vshift) {$=$}; %
				\node[left=-.1cm of c, anchor=east] {$m_{\hat{L}_h}^{(-\lc_1)}$};
				\node[right=-.1cm of c, anchor=west] {$\frac{1}{h^2} \mat[r]{ 0 & 0 \\ -1 & 0}$};
				
				\node (c) at (\hshift,-\vshift) {$=$}; %
				\node[left=-.1cm of c, anchor=east] {$m_{\hat{L}_h}^{(\lc_2)}$};
				\node[right=-.1cm of c, anchor=west] {$\frac{1}{h^2}  \mat[r]{ 0 & -1 \\ 0 & 0}$};
				
				\node (c) at (2.2*\hshift,0) {$=$}; %
				\node[left=-.1cm of c, anchor=east] {$m_{\hat{L}_h}^{(\lc_1+\lc_2)}$};
				\node[right=-.1cm of c, anchor=west] {$\frac{1}{h^2}  \mat[r]{ 0 & -1 \\ 0 & 0}$};
				
				\node (c) at (-1.9*\hshift,0) {$=$}; %
				\node[left=-.1cm of c, anchor=east] {$m_{\hat{L}_h}^{-(\lc_1+\lc_2)}$};
				\node[right=-.1cm of c, anchor=west] {$\frac{1}{h^2}  \mat[r]{ 0 & 0 \\ -1 & 0}$};
			\end{tikzpicture}%
		}
	\end{center}
	\begin{figure}
		\TikzFigsABresize[a)\ $L_h: \fs(T_{\lA,\lZ }^{(0)}) \rightarrow \fs(T_{\lA,\lZ }^{(0)})$][ b)\ $\hat{L}_h: \fs(T_{\lC,\lZ }^{\se}) \rightarrow \fs(T_{\lC,\lZ }^{\se})$]{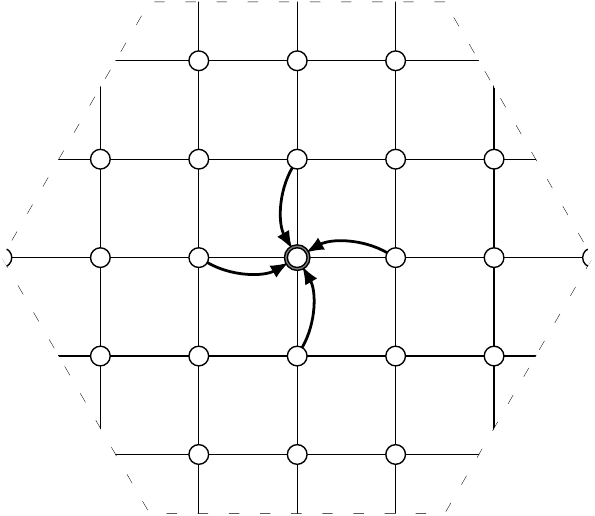}{red-black-stencil01}
		\caption[The discretized Laplace operator with respect to two different crystal representations.]{The discretized Laplace operator with respect to two different crystal representations. In a) the operator is illustrated with respect to the primitive vectors $\la_1, \la_2$ and in b) with respect to $\lc_1,\lc_2$.}\label{fig:redblack_laplacian}
	\end{figure}
	
	Thus, the spectrum of the error propagator of the red-black Gauss-Seidel method applied to the Laplacian
	\begin{equation}\label{eq:RBGEP}
		\hat{G} = (I-\hat{S}_b^\dagger \hat{L}_h)(I-\hat{S}_r^\dagger \hat{L}_h)
	\end{equation}
	can now be obtained elementwise for each fixed $k \in T_{\lC,\lZ}^* = \PT(\lC^{-T}) \cap \mL(\lZ^{-T})$ by first computing the individual \emph{symbols} $I_k, (\hat{S}_r)_k, (\hat{S}_b)_k$ and  $(\hat{L}_h)_k$ and assembling the symbols $\hat{G}_k$ according to~\cref{eq:RBGEP} and the rules in~\cref{sec:rules_of_comp}, followed by the computation of the eigenvalues of the matrices $\hat{G}_k$. %
	The resulting spectral information of the discretized Laplace operator $\hat{L}_h$ and the error propagator $\hat{G}$ is illustrated in~\cref{fig:redblack_spectrum}, where it is sampled on the dual lattice $T_{\lC,\lZ}^*$. Note, that one naturally obtains two eigenvalues per sampled wave vector $k$. In case of the spectrum of the red-black Gauss-Seidel error propagator, one of the two eigenvalues is equal to zero for all wave vectors $k$.%
	
	\begin{figure}
		\TikzFigsABresize[a)\ $\hat{L}_h: \fs(T^\se_{\lC,\lZ }) \rightarrow \fs(T^\se_{\lC,\lZ })$][ b)\ $\hat{G}_h: \fs(T_{\lC,\lZ }^{\se}) \rightarrow \fs(T_{\lC,\lZ }^{\se})$]{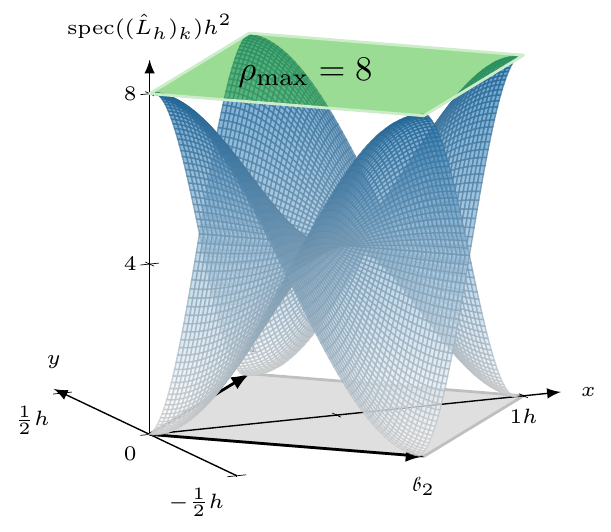}{LFA-laplacian-5p-rb-jacobi-0}
		
		\caption{The spectra of a) the discretized Laplace operator and b) the red-black Gauss-Seidel smoother with respect to the red-black crystal structure. In here, $\lb_1$ and $\lb_2$ denote the columns of $\lC^{-T}$.}\label{fig:redblack_spectrum}
	\end{figure}

	\section{Crystal representations and natural isomorphisms}\label{sec:crystal_reps_and_isos}
	In general, we are given several multiplication operators which make up the error propagator of an iterative method, each defined with respect to its own (minimal) translational invariance. In order to analyze the method we thus need to find a common denominator, i.e., a lattice basis corresponding to the collective translational invariance, and rewrite the operators accordingly. %
	The following theorem yields a set of primitive vectors of such a collective translational invariance for two arbitrary lattices, if it exists. %

	\begin{theorem}
		\label{thm:lcm_mat}
		Given two $n$-dimensional lattices $\mL(\lA)$, $\mL(\lB)$. If there exists an $r\in \mZ$, such that $M = r \lA^{-1} \lB \in \mZ^{n\times n}$,
		then there is a lattice $\mL(\lC)$ with
		$\mL(\lC) \subset \mL(\lA)$ and $\mL(\lC) \subset \mL(\lB)$ with $|\det(\lC)|$ as small as possible. A lattice basis of $\mL(\lC)$ is given by 
		\begin{align*}
			\lC = \lB T^{-1} N_\lB, \quad
		\end{align*}
		where $N_\lB$ is a diagonal matrix with $(N_\lB)_{i,i} := r \cdot \operatorname{gcd}(r,s_{i})^{-1}$, 
		where $S= V^{-1} M T^{-1} = \operatorname{diag}{(s_{1},\ldots,s_{n})}$ is the SNF of $M$ (cf.~\cref{def:SNF}) and $\operatorname{gcd}(r,s_{i})$ denotes the greatest common divisor of $r$ and $s_i$.  Consequently, we write $\mL(\lC) = \operatorname{lcm}( \mL(\lA), \mL(\lB))$ and call it the \emph{least common multiple} of $\mL(\lA)$ and $\mL(\lB)$.
		\begin{proof}
			Due to \cref{lem:sublattice} it is sufficient to find integral matrices $N_\lA, N_\lB$, such that
			\begin{align*}
				\mL(\lC) = \mL(\lA N_\lA) = \mL(\lB N_\lB)
			\end{align*}
			with $|\det(N_\lA)|$ and $|\det(N_\lB)|$ as small as possible. Using \cref{thm:lattice_equiv_unimod}, i.e.,
			$
			\mL(\lB) = \mL(\lB V_1), \quad \mL(\lC) = \mL(\lC U_1),
			$
			for any unimodular matrices $U_1,V_1$, we can assume the equality
			\begin{align*}
				\lA N_\lA = \lB U N_\lB %
			\end{align*}
			for any unimodular $U$ and $N_\lB$ in Hermite normal form (cf.~\cref{thm:integer_normalforms}). Plugging in the Smith decomposition $VST$ of $M=r \lA^{-1} \lB$ and defining $U:=T^{-1}$, we find 
			\begin{align*}
				\begin{array}{rcccl}
					N_\lA & = & \lA^{-1} \lB T^{-1} N_\lB &= &  \frac{1}{r} VS N_\lB.     
				\end{array}
			\end{align*}
			Both matrices
			\begin{align*}
				N_\lB = \mat{(N_\lB)_{1,1} & \ldots & (N_\lB)_{1,n} \\
					& \ddots & \vdots \\
					& & (N_\lB)_{n,n} } \text{ and } \frac{1}{r} S N_\lB = \mat{\frac{s_{1}}{r}(N_\lB)_{1,1} & \ldots & (N_\lB)_{1,n}\frac{s_{1}}{r} \\
					& \ddots & \vdots \\
					& & (N_\lB)_{n,n}\frac{s_{n}}{r} }
			\end{align*}
			have to be integral with $$|\det(N_\lB)| = |\prod_{i=1}^n (N_\lB)_{i,i}| \text{ and }|\det(\frac{1}{r} S N_\lB)| = |\prod_{i=1}^n (N_\lB)_{i,i} \frac{s_{i}}{r} |$$ as small as possible.
			It can easily be verified that 
			\begin{align*}
				(N_\lB)_{i,i} := \frac{r}{\operatorname{gcd}(r,s_{i})}
			\end{align*}
			is the optimal choice for the diagonal entries. With this choice, the off-diagonal entries $(N_\lB)_{i,j}$ have to be integral multiples of $(N_\lB)_{i,i}$. Due to the fact that $N_\lB$ is in Hermite normal form, the off-diagonal entries are zero.
		\end{proof}
	\end{theorem}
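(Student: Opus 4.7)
My approach follows the strategy suggested by \cref{lem:sublattice}: the conditions $\mL(\lC)\subset \mL(\lA)$ and $\mL(\lC)\subset \mL(\lB)$ translate to the existence of integer matrices $N_\lA, N_\lB \in \mZ^{n \times n}$ with $\lC = \lA N_\lA = \lB N_\lB$, so I would reformulate the problem as finding such a pair that minimizes $|\det(N_\lB)|$, which by the multiplicativity of the determinant is equivalent to minimizing $|\det(\lC)|$.

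Next, I would exploit the gauge freedom afforded by \cref{thm:lattice_equiv_unimod}: right-multiplying $\lC$ or $\lB$ by a unimodular matrix leaves the associated lattice unchanged, so I may assume without loss of generality that $N_\lB$ is in Hermite normal form and that one of the unimodular factors of the SNF $M = VST$ of $M = r\lA^{-1}\lB$ can be absorbed by an appropriate change of basis. Setting $U := T^{-1}$ and rewriting $\lA N_\lA = \lB U N_\lB$ then yields the identity $N_\lA = \tfrac{1}{r} V S N_\lB$; since $V$ is unimodular, integrality of $N_\lA$ reduces to integrality of $\tfrac{1}{r} S N_\lB$, turning the sublattice condition into a purely arithmetic constraint on the entries of $N_\lB$.

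The heart of the proof is then the minimization of $|\det(N_\lB)| = \prod_i (N_\lB)_{i,i}$ (a product of positive integers for nondegenerate $N_\lB$) subject to the simultaneous integrality of $(N_\lB)_{i,j}$ and $s_i (N_\lB)_{i,j}/r$. On the diagonal this forces $r \mid s_i (N_\lB)_{i,i}$, which by a standard $\operatorname{gcd}$ argument implies that $(N_\lB)_{i,i}$ must be a positive integer multiple of $r/\operatorname{gcd}(r,s_i)$; the value stated in the theorem is the smallest such choice and, crucially, can be selected independently in each coordinate without destroying the divisibility elsewhere because $S$ is diagonal. I expect this minimization step to be the main substantive argument.

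Finally, I would handle the off-diagonal entries using the HNF normalization. Each $(N_\lB)_{i,j}$ with $j>i$ must be a nonnegative integer for which $s_i(N_\lB)_{i,j}/r \in \mZ$, hence a nonnegative integer multiple of $(N_\lB)_{i,i}$ by the same divisibility analysis as on the diagonal; but the HNF constraint $(N_\lB)_{i,j} < (N_\lB)_{i,i}$ then forces it to vanish. The anticipated obstacle is less conceptual than notational: a careful bookkeeping of which unimodular transformations are absorbed on which side is needed so that $\lC = \lB T^{-1} N_\lB$ actually realizes the minimum $|\det(\lC)|$ rather than merely furnishing an upper bound.
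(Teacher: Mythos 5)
Your proposal follows essentially the same route as the paper's proof: reduce via \cref{lem:sublattice} to finding integral $N_\lA,N_\lB$ with $\lA N_\lA=\lB N_\lB$, absorb the unimodular SNF factor $T^{-1}$ so that $N_\lA=\tfrac1r VSN_\lB$, minimize the diagonal of an HNF $N_\lB$ under the divisibility constraint $r\mid s_i(N_\lB)_{i,i}$ to get $r/\operatorname{gcd}(r,s_i)$, and kill the off-diagonal entries via the HNF bound. The argument is correct and matches the paper's, with your gcd and off-diagonal bookkeeping being slightly more explicit than the paper's "it can easily be verified".
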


	We now study different representations of the same crystal structure in order to derive a general way to rewrite a multiplication operator with respect to some coarser crystal structure corresponding to a sublattice, as has been done manually in \cref{sec:example} for the discretized Laplacian.
	
	\begin{theorem}[Rewriting a crystal with respect to a sublattice]
		\label{thm:crystal_congruence}
		Let $\mL^\se(\lA)$ be a crystal and $\mL(\lC) \subset \mL(\lA)$ a sublattice. Denoting $T_{\lA,\lC}=\{\st_1,\ldots,\st_p\}$,%
		\footnote{Recall that a unique list of representatives $T_{\lA,\lC} = \{\st_1,\ldots,\st_p\}$ can be found via \cref{cor:quotient_latticepointlist}.} 
		the set \begin{align*}
			\pc(\lC ):=\set{x+\delta\in\mR^n}{\delta\in\PT(\lA ), x \in \{\st_1,\ldots,\st_p\}}
		\end{align*} defines a primitive cell of $\mL(\lC)$,
		and the tuple
		\begin{align*}
			\seu = (\st_1+\se_1,\ldots , \st_1+\se_m, \st_2+\se_1,\ldots,\st_p+\se_m)\in\pc(\lC )^{p\cdot m}
		\end{align*}
		defines a structure element of $\mL^\seu(\lC)$ such that
		$\mL^\seu(\lC) \cong \mL^\se(\lA),$
		meant as a one-to-$p$ correspondence.
	\end{theorem}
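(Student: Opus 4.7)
The plan is to verify the three separate claims in sequence: that $\pc(\lC)$ is a primitive cell of $\mL(\lC)$, that $\seu$ lies componentwise in $\pc(\lC)$, and that the displayed correspondence is well-defined in the one-to-$p$ sense.

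First, I would record the coset decomposition
\begin{align*}
\mL(\lA) = \dot\cup_{i=1}^{p}\bigl(\st_i + \mL(\lC)\bigr),
\end{align*}
which is precisely the statement that $\{\st_1,\ldots,\st_p\}$ is a complete, non-redundant list of representatives of $T_{\lA,\lC}$, guaranteed by \cref{cor:quotient_latticepointlist}. From this, combined with the defining tiling property of the standard primitive cell $\PT(\lA)$ (\cref{def:primitive_cell}), I would compute
\begin{align*}
\mR^n \;=\; \dot\cup_{x\in\mL(\lA)}\bigl(x+\PT(\lA)\bigr) \;=\; \dot\cup_{z\in\mL(\lC)}\,\dot\cup_{i=1}^{p}\bigl(z+\st_i+\PT(\lA)\bigr) \;=\; \dot\cup_{z\in\mL(\lC)}\bigl(z+\pc(\lC)\bigr),
\end{align*}
thereby verifying that $\pc(\lC)$ tiles $\mR^n$ under translation by $\mL(\lC)$ without overlap. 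The fact that the $p$ shifted copies $\st_i+\PT(\lA)$ are themselves disjoint will be the only nontrivial bookkeeping, and it follows because any overlap would force two distinct representatives to differ by an element of $\mL(\lC)$, contradicting the choice of $T_{\lA,\lC}$.

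For the second claim, since each $\st_i$ is one of the chosen representatives and each $\se_j$ lies in $\PT(\lA)$, by definition $\st_i+\se_j\in\pc(\lC)$, so $\seu\in\pc(\lC)^{p\cdot m}$ is immediate.

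For the one-to-$p$ correspondence, I would construct an explicit bijection between $\mL^\seu(\lC)\times\{1,\ldots,p\}$ and $\mL^\se(\lA)$ by sending the pair $\bigl(z+\seu,\,i\bigr)$ to the tuple $(z+\st_i+\se_1,\ldots,z+\st_i+\se_m) = (z+\st_i)+\se\in\mL^\se(\lA)$. Surjectivity and injectivity both rely on the unique decomposition $x = z+\st_i$, $z\in\mL(\lC)$, $i\in\{1,\ldots,p\}$ of any $x\in\mL(\lA)$ afforded by the coset decomposition above. Thus each element of $\mL^\seu(\lC)$ corresponds to exactly $p$ elements of $\mL^\se(\lA)$, which is the intended meaning of $\mL^\seu(\lC)\cong\mL^\se(\lA)$. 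The only subtle point here is to keep the indexing conventions straight between the $p\cdot m$-tuple describing $\seu$ and the $p$ separate $m$-tuples it encodes; beyond this accounting, no real obstruction arises.
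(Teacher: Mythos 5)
Your proof is correct and rests on the same idea as the paper's: the unique decomposition of every point of $\mL(\lA)$ as an element of $\mL(\lC)$ plus one of the representatives $\st_i$ (which the paper realizes concretely via $\lA x = \lC\lfloor y\rfloor + \lC(y-\lfloor y\rfloor)$ after normalizing the $\st_j$ into $\PT(\lC)$). Your write-up is in fact slightly more complete, since you also verify the tiling property of $\pc(\lC)$ explicitly, a point the paper's proof leaves implicit.
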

	\begin{proof}
		Without loss of generality we may assume $\st_{j} \in \PT(\lC)$, $j = 1,\ldots,p$. Then, each element in $\mL^\se(\lA )$ can be written as
		\begin{align*}
			z=(\lA x+\se_1,\ldots,\lA x+\se_m)
		\end{align*}
		and there is a unique $y$, such that $\lA x=\lC y=\lC \lfloor y \rfloor + \lC (y - \lfloor y \rfloor)$ with $\lC \lfloor y \rfloor \in \mL(\lC )$ and $\lC (y - \lfloor y \rfloor)=\st_j \in \PT(\lC )\cap\mL(\lA )$. Thus, we find $z$ as a unique part of the element 
		\begin{align*}
			\lC \lfloor y \rfloor +{\seu}=(\ldots,\lC \lfloor y \rfloor  + \st_j + \se,\ldots)=(\ldots,z,\ldots).
		\end{align*}
		This argument works in the other direction in the same way.
	\end{proof}
	\begin{remark}\label{rem:natural_isomorph}
		Note, that $\seu$, as defined in~\cref{thm:crystal_congruence}, is an explicit representation of $T^\se_{\lA,\lC}$, thus using this explicit representation~\cref{thm:crystal_congruence} implies a congruence of the function spaces
	\begin{align*}
		\fs(T_{\lA ,\lZ }^\se) \cong \fs(T_{\lC ,\lZ }^{T^\se_{\lA ,\lC }})
	\end{align*}
	given by the natural isomorphism $\eta: \fs(T_{\lA ,\lZ }^\se) \rightarrow \fs(T_{\lC ,\lZ }^{T^\se_{\lA ,\lC }})$,
	\begin{align*}
		\fs(T_{\lA ,\lZ }^\se) \ni f(\cdot) \mapsto (\eta f )(\cdot)= (f(\cdot+\st_1),\ldots, f(\cdot+\st_p)) \in \fs(T_{\lC ,\lZ }^{T^\se_{\lA ,\lC }}),
	\end{align*}
	as $(f)$ and $(\eta f)$ describe the same value distribution on the crystal. This congruence in turn implies that the coarsest possible crystal interpretation, i.e., $\seu \cong T^\se_{\lA ,\lZ }$, is simply the complex coordinate space
	$$\mC^{mn} = \fs(T_{\lZ ,\lZ }^{T^\se_{\lA ,\lZ }})  \cong \fs(T^\se_{\lA ,\lZ })$$
	with $n:=|T_{\lA ,\lZ }|$. The scalar product on $\fs(T^\se_{\lA ,\lZ })$ then corresponds to the Euclidean scalar product on $\mC^{mn}$ up to a factor of $n$. %
	\end{remark}

	Using the natural isomorphism of function spaces in~\cref{rem:natural_isomorph}, we can derive the transformations of multiplication operators when coarsening the underlying crystal representation corresponding to a sublattice.
	
	\begin{theorem}[Rewriting a multiplication operator with respect to a sublattice]\label{thm:operator_sublattice_coarsening}
		Consider crystals $\mL^\sed(\lA)$, $\mL^\sec(\lA)$, a sublattice $\mL(\lC) \subset \mL(\lA)$ and a multiplication operator
		\begin{align*}
			L: \fs(T_\lA ^{\sed}) \rightarrow \fs(T_\lA ^{\sec}), \quad (\lop f)(x):=\sum_{y\in T_{\lA }} m_L^{(y)} f(x+y), \quad m_L^{(y)} \in \mC ^{|\sec|\times |\sed|}.
		\end{align*}
		Then, using $T_{\lA,\lC}=\{\st_1,\ldots,\st_p\}$,
		the multiplication operator 
		\begin{align*}
			G:\fs(T_\lC ^{\hat{\sed}}) \rightarrow  \fs(T_\lC ^{\hat{\sec}}), \quad (G g)(x)=\sum_{y\in T_\lC } m_G^{(y)} g(x+y), \quad m_G^{(y)} \in \mC ^{p |\sec|\times p|\sed|},
		\end{align*}
		with block matrices
		$(m_G^{(y)})_{i,k} := m_L^{(y-\st_i+\st_k)} \in \mC ^{|\sec|\times |\sed|}$
		fulfills the commutative diagram:
		\[
		\begin{tikzcd}
			\fs(T_\lA ^{\sed}) \arrow[r, "\lop"] \arrow[d, "\eta^\sed"]
			& \fs(T_\lA ^{\sec}) \arrow[d, "\eta^\sec" ] \\
			\fs(T_\lC ^{\hat{\sed}}) \arrow[r, "G" ]
			& \fs(T_\lC ^{\hat{\sec}}).
		\end{tikzcd}
		\]
		Here, the mappings for $\se \in \{\sed,\sec\}$,
		\begin{align*}
			\eta^\se: \fs(T_\lA ^{\se}) \rightarrow \fs(T_\lC ^{\hat{\se}}), \quad f(\cdot)\mapsto (f(\cdot +\st_1),\ldots,f(\cdot+\st_p)),
		\end{align*}
		denote the natural isomorphisms between the congruent crystal representations.%
	\end{theorem}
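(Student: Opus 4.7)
The plan is to establish the commutative diagram by a direct pointwise verification: for arbitrary $f\in\fs(T_\lA^\sed)$ and $x\in T_\lC$, I would show that the $p$-tuple $(\eta^\sec Lf)(x)$ coincides blockwise with $(G\eta^\sed f)(x)$. Expanding the left-hand side using the definition of $\eta^\sec$ gives that the $i$-th component equals
\begin{equation*}
(Lf)(x+\st_i) \;=\; \sum_{y\in T_\lA} m_L^{(y)} f(x+\st_i+y),
\end{equation*}
while expanding the right-hand side and reading off the $i$-th block component via the definitions of $\eta^\sed$ and $(m_G^{(y)})_{i,k}$ yields
\begin{equation*}
\sum_{y\in T_\lC}\sum_{k=1}^{p} m_L^{(y-\st_i+\st_k)} f(x+y+\st_k).
\end{equation*}
The goal is therefore to match these two expressions.

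To do so I would perform the substitution $z := y - \st_i + \st_k$ in the double sum, so that $x + y + \st_k = x + \st_i + z$ and the summand becomes $m_L^{(z)} f(x+\st_i+z)$. The key reindexing claim is then that as $(k,y)$ ranges over $\{1,\dots,p\}\times T_\lC$, the element $z$ traverses $T_\lA$ exactly once. This follows because $\{\st_1,\dots,\st_p\}$ is a complete set of representatives of $T_{\lA,\lC}=\mL(\lA)/\mL(\lC)$ by \cref{cor:quotient_latticepointlist}, and for any fixed $i$ the translated set $\{\st_k-\st_i : k=1,\dots,p\}$ is again a complete set of coset representatives (translation by $\st_i\in\mL(\lA)$ permutes cosets). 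Consequently $\mL(\lA) = \dot\cup_{k=1}^{p}\bigl(\mL(\lC) + (\st_k-\st_i)\bigr)$, and passing to the finite torus $T_\lA$ this becomes a disjoint decomposition $T_\lA = \dot\cup_{k=1}^{p}\bigl(T_\lC + (\st_k-\st_i)\bigr)$.

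After the reindexing the $i$-th component of $(G\eta^\sed f)(x)$ reduces to $\sum_{z\in T_\lA} m_L^{(z)} f(x+\st_i+z)$, which is precisely $(Lf)(x+\st_i)$, the $i$-th component of $(\eta^\sec Lf)(x)$. Since $i$ and $x$ were arbitrary this establishes the commutativity. The main (minor) obstacle is the bookkeeping in the reindexing step: one must justify that the substitution $z=y-\st_i+\st_k$ is well-defined and bijective on the quotient $T_\lA$, which is where the disjoint coset decomposition induced by \cref{cor:quotient_latticepointlist} and the sublattice relation $\mL(\lC)\subset\mL(\lA)$ are essential; everything else is a routine expansion of definitions.
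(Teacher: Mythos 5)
Your proposal is correct and follows essentially the same route as the paper's proof: both reduce the claim to the blockwise identity $[(\eta^\sec Lf)(x)]_i = (Lf)(x+\st_i)$ and then reindex the sum over $T_\lA$ as a double sum over $T_\lC\times\{1,\ldots,p\}$ using the coset decomposition induced by the representatives $\st_1,\ldots,\st_p$. The only difference is cosmetic (you expand from the $G$-side and the paper from the $L$-side), and your explicit justification that $\{\st_k-\st_i\}_k$ is again a complete set of coset representatives is exactly the step the paper leaves implicit.
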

	\begin{proof}
		A straightforward calculation for each block-row $i$ yields
		\begin{align*}
			\begin{array}{rcl}
				[(\eta^\sec L f)(x)]_i & = & (Lf)(x+\st_i)                                                            %
				\ =\ \displaystyle\sum\limits_{k=1}^{p} \displaystyle\sum\limits_{y \in T_\lC } m_L^{(y+\st_k)} f(x+y+\st_i+\st_k) \\[\medskipamount]%
				& = &  \displaystyle\sum\limits_{k=1}^{p} \displaystyle\sum\limits_{y \in T_\lC } m_L^{(y-\st_i+\st_k)} f(x+y+\st_k) %
				\ =\   \displaystyle\sum\limits_{y \in T_\lC } \displaystyle\sum\limits_{k=1}^{p} (m_G^{(y)})_{i,k} f(x+y+\st_k)      \\[\bigskipamount]%
				& = & [G (f(x+\st_1),\ldots,f(x+\st_p))]_i                                    %
				\ =\ [(G \eta^\sed f)(x)]_i.
			\end{array}
		\end{align*}
	\end{proof}
	
	Using~\cref{thm:operator_sublattice_coarsening} we now know how to rewrite multiple multiplication operators with respect to some common crystal structure with a coarser translational invariance. %
	Due to the fact that we do not make any assumption on the initial representation of the crystal structures, the resulting structure elements of~\cref{cor:quotient_latticepointlist} might differ in their orderings and might contain shifts with respect to the common shift invariance. To automatically remove these differences and determine the corresponding transformations of the associated multiplication operators we first define the notion of congruent structure elements. %
	
	\begin{definition}\label{def:crystaltori_congruent}
		Two crystal tori $T_\lA ^\se \cong T_\lA ^\st$, $\lA \in \mR^n$ are \emph{congruent} with respect to $\mL(\lA)$ if
		the structure elements are of the same size, i.e., 
		$|\se| = |\st| = m$, %
		and there is a permutation $\pi:\{1,\ldots,m\} \rightarrow \{1,\ldots,m\}$ as well as shifts $y_j \in \mL(\lA )$, such that
		\begin{align*}
			\se_j =  y_j + \st_{\pi(j)} %
		\end{align*}
	\end{definition}
	
	In order to introduce a unique representation for the sake of automation, we introduce the following \emph{normal form} and the required transformations to transfer any operator to this form. %
	\begin{definition}\label{def:crystal_op_and_normalform}
		Let
		$L: \fs(T_\lA ^{\sed}) \rightarrow \fs(T_\lA ^{\sec})$ 
		be a multiplication operator.
		We say $L$ is in \emph{normal form} if
		\begin{itemize}
			\item the coordinates of the structure elements are found in the primitive cell spanned by the primitive vectors, i.e., $\sed_i,\sec_j \in \PT(\lA) = \lA[0,1)^n \text{\ for each\ } i,j,$
			\item the structure elements $\sed$ and $\sec$ are sorted lexicographically.\footnote{In case $\sed_i=\sed_j$ or $\sec_i = \sec_j$ for any $i\neq j$ a consistent ordering of $i,j$ has to be defined a priori.}
		\end{itemize}
	\end{definition}
	
	We now derive the implications of \cref{def:crystaltori_congruent} for multiplication operators when the structure element is element wise shifted or permuted. We do so in two steps, \cref{thm:shifted_structure_element} and \cref{thm:permuted_structure_element}. First, we show that a shift of an entry of the structure element in the codomain or domain results in a modification of the corresponding row or column of the non-zero multipliers, respectively. %
	
	\begin{theorem}[Shifted structure elements]\label{thm:shifted_structure_element}
		Consider the two multiplication operators
		$ L: \fs(T_\lA ^{\se}) \rightarrow \fs(T_\lA ^{\st})$ and $G: \fs(T_\lA ^{\st}) \rightarrow \fs(T_\lA ^{\seu})$
		defined by
		\begin{align*}
			(Lf)(x) = \sum_{y \in T_{\lA }} m_L^{(y)} f(x+y), \quad m_L^{(y)} \in \m\lC ^{|\st|\times |\se|},  \\
			(Gg)(x) = \sum_{y \in T_{\lA }} m_G^{(y)} g(x+y), \quad m_G^{(y)} \in \m\lC ^{|\seu| \times |\st|}.
		\end{align*}
		Further let $\hat{\st}$ be a structure element which is obtained from $\st$ when shifted element-wise along $\mL(\lA)$, i.e.,
		\begin{align*}
			\st & =  (\st_1,\ldots,\st_m)    =  (\hat{\st}_1 + y_1,\ldots,\hat{\st}_m+y_m ) = \hat{\st} +(y_1,\ldots,y_m),
		\end{align*}
		where $y_1,\ldots,y_m \in\mL(\lA )$ and $m=|\st|$. Then, the operators $\hat{L}$ and $\hat{G}$ given by
		\begin{align*}
			\begin{array}{rcllcl}
				(\hat{L}f)(x) &=& \displaystyle\sum\limits_{y \in T_{\lA }} m_{\hat{L}}^{(y)} f(x+y), & m_{\hat{L}}^{(y)} \in \m\lC ^{|\st|\times |\se|} &\text{ with }& (m_{\hat{L}}^{(y)})_{i,j} := (m_{L}^{(y + y_i)})_{i,j},\\
				(\hat{G}f)(x) &=& \displaystyle\sum\limits_{y \in T_{\lA }} m_{\hat{G}}^{(y)} f(x+y), & m_{\hat{G}}^{(y)} \in \m\lC ^{|\seu|\times |\st|} &\text{ with }& (m_{\hat{G}}^{(y)})_{i,j} := (m_{G}^{(y - y_j)})_{i,j}
			\end{array}
		\end{align*}
		fulfill the commutative diagram:
		\[
		\begin{tikzcd}
			\fs(T_\lA ^\se) \arrow[r, "\lop"] \arrow[rd, "\hat{L}"]
			& \fs(T_\lA ^\st) \arrow[r, "G"] \arrow[d, "\trop" ]
			& \fs(T_\lA^\seu) %
			\\
			&\fs(T_\lA ^{\hat{\st}}) \arrow[ur,"\hat{G}"]
		\end{tikzcd}
		\]
	\end{theorem}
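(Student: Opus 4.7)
The plan is to identify the translation operator $\trop : \fs(T_\lA^\st) \to \fs(T_\lA^{\hat\st})$ that realizes the congruence between the two structure-element representations, and then verify the two triangles of the diagram by direct computation on the multiplier sums.

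First I would pin down the right definition of $\trop$. Since $\st_i = \hat\st_i + y_i$ with $y_i \in \mL(\lA)$, a physical value sitting at position $x + \st_i = (x + y_i) + \hat\st_i$ is labeled by base point $x$ in the $\st$-representation but by base point $x + y_i$ in the $\hat\st$-representation. Hence the natural identification of value distributions is the componentwise translation
\begin{align*}
(\trop f)_i(x) := f_i(x - y_i), \qquad i = 1,\ldots,m,
\end{align*}
which is well-defined on $T_{\lA,\lZ}$ because $y_i \in \mL(\lA)$. This is the same basic identification principle as in Remark~\ref{rem:natural_isomorph}, just carried out on a single component at a time.

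Next I would verify the upper triangle $\hat L = \trop L$. Writing out componentwise,
\begin{align*}
(\trop L f)_i(x) = (Lf)_i(x - y_i) = \sum_{y \in T_\lA} \sum_j (m_L^{(y)})_{i,j}\, f_j(x - y_i + y),
\end{align*}
and the substitution $y \mapsto y + y_i$ (which is a bijection on the finite torus $T_{\lA,\lZ}$) turns the right-hand side into $\sum_y \sum_j (m_L^{(y+y_i)})_{i,j} f_j(x + y)$, which matches the definition $(m_{\hat L}^{(y)})_{i,j} = (m_L^{(y+y_i)})_{i,j}$ and hence equals $(\hat L f)_i(x)$. The lower triangle $G = \hat G \trop$ is analogous: expanding gives
\begin{align*}
(\hat G \trop f)_i(x) = \sum_{y} \sum_{j} (m_{\hat G}^{(y)})_{i,j} (\trop f)_j(x+y) = \sum_{y}\sum_{j} (m_G^{(y - y_j)})_{i,j}\, f_j(x + y - y_j),
\end{align*}
and the substitution $y \mapsto y + y_j$ (applied in each fixed column $j$) collapses this to $\sum_y \sum_j (m_G^{(y)})_{i,j} f_j(x+y) = (Gf)_i(x)$.

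The only step requiring any care is confirming that the column-wise index shift $y \mapsto y + y_j$ is legitimate even though $y_j$ depends on $j$: because the inner sum over $y$ is over the entire torus $T_{\lA,\lZ}$ and $y_j \in \mL(\lA)$, each such shift is a bijection on the index set and can be applied independently within each column. I expect this is the only subtlety; everything else is routine bookkeeping of how the row index (shifts in the codomain) and column index (shifts in the domain) of $m_L$ and $m_G$ respectively absorb the translations $y_i$ and $y_j$.
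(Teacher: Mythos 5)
Your proposal is correct and follows essentially the same route as the paper: the same componentwise translation $(\trop f)_i(x)=f_i(x-y_i)$, the same change of variables $y\mapsto y+y_i$ to verify $\hat{L}=\trop L$, and the lower triangle handled by the symmetric substitution (the paper writes it as $\hat{G}=G\trop^{-1}$, you as $\hat{G}\trop=G$, which is the same identity). No gaps.
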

	\begin{proof}
		The natural isomorphism between the two corresponding function spaces is given by
		\begin{align*}
			\trop: \fs(T_\lA ^\st) \rightarrow \fs(T_\lA ^{\hat{\st}}),\  f=(f_1,\ldots,f_m)\mapsto (f_1(\cdot - y_1),\ldots,f_m(\cdot-y_m)) = \trop f
		\end{align*}
		as $f$ and $(\trop f)$ describe the same value distribution on the crystal.\footnote{The left-hand side of $[(\trop f)(x)]_i=f_i(x-y_i)$ corresponds to the value at position $x + {\hat{\st}}_i = (x-y_i)+\st_i$ which coincides with the position of the value of the right-hand side.}
		Again, a straightforward calculation yields
		\begin{align*}
			\begin{array}{rcl}
				[(\trop Lf)(x)]_i & =& [(Lf)(x-y_i)]_i                                                              %
				\ =\  \displaystyle\sum\limits_{y \in T_{\lA }} \displaystyle\sum\limits_{j=1}^{|\se|} (m_L^{(y)})_{i,j}  f_j(x-y_i+y)  \\ %
				& =&   \displaystyle\sum\limits_{y \in T_{\lA }} \displaystyle\sum\limits_{j=1}^{|\se|}  (m_L^{(y+y_i)})_{i,j}  f_j(x+y) %
				\ =\   [\displaystyle\sum\limits_{y \in T_{\lA }}  m_{\hat{L}}^{(y)} f(x+y)]_i.
			\end{array}
		\end{align*}
		Analogously we find $[( G \trop^{-1} g)(x)]_i = [\sum_{y \in T_{\lA }}  (m_{\hat{G}}^{(y)})  g(x+y)]_i$.
	\end{proof}
	
	Finally, we show that permutations of the entries of the structure element result in a transformation of the non-zero multipliers by corresponding permutation matrices. 
	
	\begin{theorem}[Permuted structure elements]\label{thm:permuted_structure_element}
		Consider the two multiplication operators
		$ L: \fs(T_\lA ^{\se}) \rightarrow \fs(T_\lA ^{\st})$ and $G: \fs(T_\lA ^{\st}) \rightarrow \fs(T_\lA ^{\seu})$
		defined by
		\begin{align*}
			(Lf)(x) = \sum_{y \in T_{\lA }} m_L^{(y)} f(x+y), \quad m_L^{(y)} \in \m\lC ^{|\st|\times |\se|},  \\
			(Gg)(x) = \sum_{y \in T_{\lA }} m_G^{(y)} g(x+y), \quad m_G^{(y)} \in \m\lC ^{|\seu| \times |\st|}.
		\end{align*}
		Let further $\hat{\st}$ be a structure element which is a permuted version of $\st$, i.e.,
		\begin{align*}
			\hat{\st} & =  (\hat{\st}_1,\ldots,\hat{\st}_m)  =  (\st_{\pi(1)},\ldots, \st_{\pi(m)}) = m_\pi \st
		\end{align*}
		where $m=|\st|$, $\pi:\{1,\ldots,m\} \rightarrow \{1,\ldots,m\}$ is a permutation and $m_\pi \in \{0,1\}^{m\times m}$ the corresponding permutation matrix. Then, the operators $\hat{L}$ and $\hat{G}$ given by
		\begin{align*}
			\begin{array}{rcllcl}
				(\hat{L}f)(x) &=& \displaystyle\sum\limits_{y \in T_{\lA }} m_{\hat{L}}^{(y)} f(x+y), & m_{\hat{L}}^{(y)} \in \m\lC ^{|\st|\times |\se|} &\text{ with }& m_{\hat{L}}^{(y)} := m_\pi m_{L}^{(y)},\\
				(\hat{G}f)(x) &=& \displaystyle\sum\limits_{y \in T_{\lA }} m_{\hat{G}}^{(y)} f(x+y), & m_{\hat{G}}^{(y)} \in \m\lC ^{|\seu|\times |\st|} &\text{ with }& m_{\hat{G}}^{(y)} := m_{G}^{(y)} m_\pi^{-1}
			\end{array}
		\end{align*}
		fulfill the commutative diagram:
		\[
		\begin{tikzcd}
			\fs(T_\lA ^\se) \arrow[r, "\lop"] \arrow[rd, "\hat{L}"]
			& \fs(T_\lA ^\st) \arrow[r, "G"] \arrow[d, "p" ]
			& \fs(T_\lA^\seu) %
			\\
			&\fs(T_\lA ^{\hat{\st}}) \arrow[ur,"\hat{G}"]
		\end{tikzcd}
		\]
	\end{theorem}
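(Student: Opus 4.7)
The plan is to mirror the strategy used in the proof of \cref{thm:shifted_structure_element}, namely, identify the natural isomorphism between the two congruent crystal representations and then verify the commutativity of the diagram by direct index-chasing. The congruence $T_\lA^\st \cong T_\lA^{\hat\st}$ is induced by the permutation $\pi$ alone (no translational shifts are involved), so the natural isomorphism should simply reshuffle the component functions according to $\pi$. Concretely I would define
\begin{align*}
  p : \fs(T_\lA^\st) \rightarrow \fs(T_\lA^{\hat\st}),\quad f=(f_1,\ldots,f_m) \mapsto (f_{\pi(1)},\ldots,f_{\pi(m)}) = m_\pi f,
\end{align*}
and note, exactly as in the shifted case, that $f$ and $pf$ describe the same value distribution on the crystal, because $[(pf)(x)]_i = f_{\pi(i)}(x)$ is precisely the value at position $x + \hat{\st}_i = x + \st_{\pi(i)}$.

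The first leg of the diagram, $p \circ L = \hat L$, then follows by a one-line calculation: for each block-row $i$,
\begin{align*}
  [(pLf)(x)]_i = [(Lf)(x)]_{\pi(i)} = \sum_{y\in T_\lA} \sum_{j=1}^{|\se|} (m_L^{(y)})_{\pi(i),j}\, f_j(x+y) = \sum_{y\in T_\lA} \sum_{j=1}^{|\se|} (m_\pi m_L^{(y)})_{i,j}\, f_j(x+y),
\end{align*}
using $(m_\pi)_{i,k} = \delta_{k,\pi(i)}$, which matches $[(\hat L f)(x)]_i$ by the definition $m_{\hat L}^{(y)} = m_\pi m_L^{(y)}$.

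For the second leg, $G \circ p^{-1} = \hat G$, I would unwind $p^{-1} g = (g_{\pi^{-1}(1)},\ldots,g_{\pi^{-1}(m)})$ and reindex: setting $j' = \pi^{-1}(j)$ in
\begin{align*}
  [(Gp^{-1}g)(x)]_i = \sum_{y\in T_\lA} \sum_{j=1}^{|\st|} (m_G^{(y)})_{i,j}\, g_{\pi^{-1}(j)}(x+y)
\end{align*}
turns the coefficient into $(m_G^{(y)})_{i,\pi(j')}$, which is exactly $(m_G^{(y)} m_\pi^{-1})_{i,j'}$ since $(m_\pi^{-1})_{k,j'} = \delta_{k,\pi(j')}$. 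This matches $[(\hat G g)(x)]_i$ and closes the diagram.

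There is no real obstacle here; the only point that needs care is the conventional choice for $m_\pi$. One has to be consistent about whether $(m_\pi)_{i,j} = \delta_{j,\pi(i)}$ or its transpose, since this choice dictates whether a left or right multiplication by $m_\pi$ (or $m_\pi^{-1}$) appears on the multipliers, and hence whether the statement of the theorem is correctly aligned with the definition of $\hat\st = m_\pi \st$. Once the convention is fixed at the outset, the proof is a two-line computation for each leg of the diagram and is a direct analogue of \cref{thm:shifted_structure_element}.
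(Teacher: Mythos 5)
Your proof is correct. The difference from the paper is one of packaging rather than substance: you identify the same natural isomorphism $p$ (component reshuffling by $\pi$, equivalently left multiplication by $m_\pi$ with the convention $(m_\pi)_{i,k}=\delta_{k,\pi(i)}$) and then verify both legs of the diagram by explicit index-chasing, mirroring the proof of \cref{thm:shifted_structure_element}. The paper instead observes that $p$ is itself a multiplication operator with the single central multiplier $m_\pi$, so both legs follow immediately from the product rule for multiplication operators in \cref{lem:calc_rules_mult_op}\,(ii); your two computations are in effect a hand-proof of the special case of that rule in which one factor has only a central multiplier. What your route buys is self-containedness and an explicit check of the convention for $m_\pi$ (which, as you rightly note, is the only place one can go wrong, since it decides whether $m_\pi$ acts from the left on $m_L^{(y)}$ and $m_\pi^{-1}$ from the right on $m_G^{(y)}$); what the paper's route buys is brevity and reuse of an already-established lemma. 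Both arguments are complete and yield the same conclusion.
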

	\begin{proof}
		Due to the fact that the natural isomorphism $p: \fs(T_\lA ^\st) \rightarrow  \fs(T_\lA ^{\hat{\st}})$ is a multiplication operator defined by $(p f)(x) = m_\pi f(x)$ for all $x\in\mL(\lA)$, the statement is true due to the rules of computation in \cref{lem:calc_rules_mult_op}.
	\end{proof}
	
	\Cref{cor:quotient_latticepointlist} and \cref{thm:lcm_mat,thm:crystal_congruence,thm:operator_sublattice_coarsening,thm:shifted_structure_element,thm:permuted_structure_element} 
	allow for the automatic adjustment of crystal representations within the LFA. The corresponding detailed algorithms which make use of these results are given in \cref{sec:algorithms}.

	\section{Application}\label{sec:Application}
	Before demonstrating the application of aLFA to some selected examples let us briefly recapitulate the individual parts of the framework. The introduction of crystal structures in~\cref{sec:lattices_crystals} allow for a canonical description of translationally invariant operators introduced in~\cref{sec:operators}. Combined with the definition of the dual of a crystal structure in~\cref{def:dual_lattice} and a corresponding orthonormal basis in~\cref{thm:wavefunction_ONB_vec}, the symbol of any single multiplication operator that is translationally invariant with respect to an arbitrary lattice structure, can be expressed (cf.~\cref{def:crystal_op_symbol}). This combination of choice of basis functions and the explicit connection of operators to their underlying structure enables an automated mixing analysis. This part of the framework can thus be seen on one hand as a unification of positional approaches to LFA by introducing structures that allow for the native treatment of arbitrary translational invariances and on the other hand as a general strategy for the automation of the frequency part back-end of LFA. In order to deal with compositions of operators as encountered in the analysis of iterative methods, the tools provided in~\cref{sec:crystal_reps_and_isos} allow for an automatic transformation of the underlying crystal structures into compatible representations and the corresponding transformations of the operators. Thus, the only task remaining for the user is to provide any, i.e., the simplest or most convenient, crystal representation of the operators. The following examples show how such a construction can be carried out and as such serve as a tutorial for the use of the algorithms in~\cref{sec:algorithms}. Annotated Jupyter Notebooks of the presented examples can be found in~\cite{aLFA_NK}.

	\subsection{Multicolored block smoother for the tight-binding Hamiltonian of graphene}\label{subsec:multicolor_graphene}
	
	In \cite{KahlKint2018} a multigrid method for the tight-binding Hamiltonian of the carbon allotrope graphene based on Kaczmarz smoothing is constructed and analyzed using conventional LFA. Due to the hexagonal structure of graphene, the lexicographic ordering of Kaczmarz and the mixing analysis of the coarse grid correction which involved a mixing of eight frequencies, the analysis turned out to be quite lengthy. In this subsection we now want to analyze a two-grid method for this problem where we replace Kaczmarz by an overlapping colored Gauss-Seidel method that allows for better parallelism in the application of the multigrid method. Thus, the goal of this example is two-fold, first we want to show that the tight-binding Hamiltonian can be easily expressed using the native crystal structure of graphene and second that even the analysis of an overlapping block smoother can be carried out with ease using aLFA due to the fact that only a representation of the involved operators is needed.

	The graphene structure can be described as a crystal $\mL^\se(\lA)$ where the underlying lattice is triangular, i.e., any three nearby lattice points form an equilateral triangle. We have
	\begin{align*}
		\mL^\se(\lA),\quad \la_1 = (\frac{3}{2}, \frac{\sqrt{3}}{2}) a,\quad \la_2 = (\frac{3}{2},-\frac{\sqrt{3}}{2}) a,
	\end{align*}
	with the structure element
	\begin{align*}
		\se = (\se_1,\se_2), \quad \se_1 = (\la,0)=\frac{1}{3}(\la_1+\la_2), \quad \se_2=(2\la,0)=\frac{2}{3}(\la_1+\la_2).
	\end{align*}
	The parameter $a=  1.42$\AA{} which denotes the distance of two neighboring atoms can be omitted as it does not occur in the tight-binding formulation. An illustration of the crystal has already been given in \cref{fig:crystal_example}.
	
	The (nearest neighbor) tight-binding Hamiltonian is a multiplication operator%

	\begin{align*}
		L: \fs(T_{\lA} ^\se) \rightarrow \fs(T_{\lA} ^\se), \quad (Lf)(x) := \sum_{y \in T_{\lA} } m_{L}^{(y)} f(x+y)
	\end{align*}
	with non-zero multipliers
	\begin{center}
		\begin{tikzpicture}
			
			\pgfmathsetmacro{\hshift}{2.2}
			\pgfmathsetmacro{\vshift}{1}
			
			\node (c) at (0,0) {$=$}; %
			\node[left=-.1cm of c, anchor=east] {$m_L^{(0)}$};
			\node[right=-.1cm of c, anchor=west] {$\mat[c]{0 & -1 \\ -1 & 0}$};
			
			\node (c) at (-\hshift,\vshift) {$=$}; %
			\node[left=-.1cm of c, anchor=east] {$m_L^{(-\la_2)}$};
			\node[right=-.1cm of c, anchor=west] {$\mat[c]{0 & -1 \\ 0 & 0}$};
			
			\node (c) at (\hshift,\vshift) {$=$}; %
			\node[left=-.1cm of c, anchor=east] {$m_L^{(\la_1)}$};
			\node[right=-.1cm of c, anchor=west] {$\mat[c]{0 & 0 \\ -1 & 0}$};
			
			\node (c) at (-\hshift,-\vshift) {$=$}; %
			\node[left=-.1cm of c, anchor=east] {$m_L^{(-\la_1)}$};
			\node[right=-.1cm of c, anchor=west] {$\mat[c]{0 & -1 \\ 0 & 0}$};
			
			\node (c) at (\hshift,-\vshift) {$=$}; %
			\node[left=-.1cm of c, anchor=east] {$m_L^{(\la_2)}$};
			\node[right=-.1cm of c, anchor=west] {$\mat[c]{0 & 0 \\ -1 & 0}$};
			
		\end{tikzpicture}\end{center}
	as illustrated in \cref{fig:tbh_stencil_and_4color_hexagon}, a).

	\begin{figure}
		\TikzFigsABresize[a)][b)]{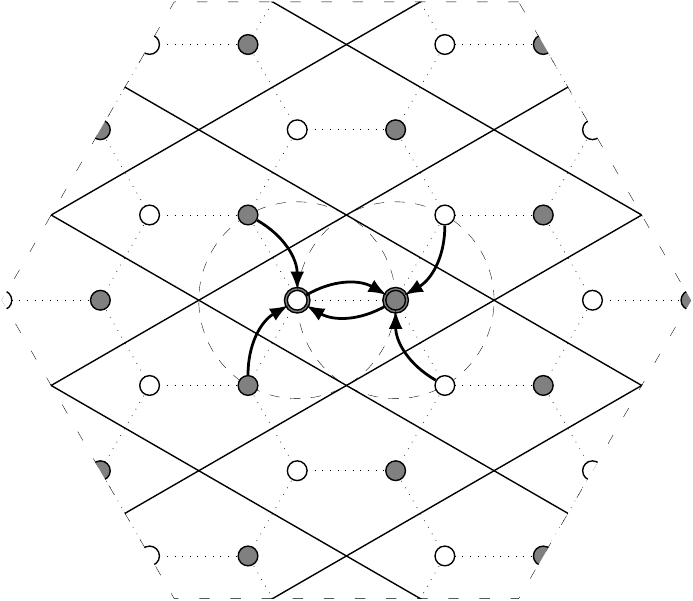}{graphene-4color-hexagon-bit}
		\caption{a) Schematic stencil of the tight binding Hamiltonian $L_{[t_0,t_1]}$ of graphene. b) Illustration of the domain decomposition into hexagons. Each unknown belongs to $3$ different colors.}\label{fig:tbh_stencil_and_4color_hexagon}
	\end{figure}

	\paragraph{Overlapping Hexagons}
	We now present an overlapping block smoother for the tight-binding Hamiltonian $L$.
	Consider the non-disjoint splitting (or coloring) of the crystal into hexagons as depicted in \cref{fig:tbh_stencil_and_4color_hexagon}, b). This splitting has a translational invariance of $\lC = 2\lA$. Rewriting the graphene crystal $T_\lA^\se$ with respect to this coarser lattice $\mL(\lC)$, we find $T_\lA^\se \cong T_\lC^\st$ with the structure element %
	\begin{align*}
		\st =(\st_1,\ldots,\st_8)= (\se, \se +\la_1, \se+\la_2, \se+\la_1+\la_2).
	\end{align*}
	The splitting is then given by the structure elements 
	\begin{align*}
		\st^{(1)}&= (\st_1^{(1)},\ldots,\st_6^{(1)})  = (\st_2,\st_3,\st_4,\st_5,\st_6,\st_7), \\
		\st^{(2)}&= \st^{(1)} + \la_1,\ %
		\st^{(3)}=  \st^{(1)} + \la_2,\text{ and } %
		\st^{(4)}=  \st^{(1)} + \la_1 + \la_2, %
	\end{align*}
	such that
	$T_\lC^{\st^{(1)}} \widehat{=} \CGcaptionAab[]$, %
	$T_\lC^{\st^{(2)}} \widehat{=} \CGcaptionAbb[]$,
	$T_\lC^{\st^{(3)}} \widehat{=} \CGcaptionAba[]$ and
	$T_\lC^{\st^{(4)}} \widehat{=} \CGcaptionAaa[]$.
	In the case of the nearest neighbor Hamiltonian any two hexagons of the same color do not interact. Thus, a relaxed sweep on the unknowns of one color is cheap and, in addition, yields a good degree of parallelism. 
	
	There are two options to describe the multiplication operators of the multicolored block smoother. On the one hand, we can construct them from scratch by defining the multiplication operator structure as well as the non-zero multipliers. On the other hand, we can derive the structure and non-zero multipliers of the operators corresponding to the colored blocking from the tight-binding Hamiltonian $L$ by exploiting the fact that a colored block corresponds to the operator $L$ restricted to this block. %
	In order to proceed with the latter approach, we first need to find the description $\hat{L}$ of the underlying operator, the tight-binding Hamiltonian $L$, with respect to the translational invariance of the splitting $\lC$. After that the structure element needs to be adjusted such that all unknowns and their couplings among each other are found within the central multiplier $m_{\hat{L}}^{(0)}$. Consider the structure element $\st$. As can be seen in \cref{fig:tbh_stencil_and_4color_hexagon}, b), the coupling among the unknowns $\st^{(\ell)}$ which we want to update simultaneously are found in the multipliers:
	
	\begin{center}
		\begin{tabular}{c|c|c|c}
			$\st^{(1)}$ &             $\st^{(2)}$ &  $\st^{(3)}$ & $\st^{(4)}$  \\
			\hline\rule{0pt}{3.1ex}
			$m_{\hat{L}}^{(0)}$ &  
			$m_{\hat{L}}^{(0)},\ m_{\hat{L}}^{(\pm \lc_1)}$ &  
			$m_{\hat{L}}^{(0)},\ m_{\hat{L}}^{(\pm\lc_2)}$ &   
			$m_{\hat{L}}^{(0)},\ m_{\hat{L}}^{(\pm\lc_1)},\ m_{\hat{L}}^{(\pm\lc_2)},\  m_{\hat{L}}^{(\pm(\lc_1-\lc_2))}$ \ %
		\end{tabular}
	\end{center}
	Thus, in order to obtain suitable descriptions for $\st^{(\ell)}$, $\ell\in\{2,3,4\}$, we need to consider shifted versions $\st + \tau^{(\ell)}$. For example
	\begin{equation}\label{eq:shiftedhex}
		\begin{bmatrix} \tau^{(1)}, & \tau^{(2)}, & \tau^{(3)}, & \tau^{(4)}\end{bmatrix} := \begin{bmatrix} 0, & \la_1, & \la_2, & \la_1 + \la_2 \end{bmatrix} .
	\end{equation}
	Finally, the error propagator corresponding to a single color relaxed block Gauss-Seidel update can be written as
	\begin{align*}
		G^{(\ell)} = (I - \omega (S^{(\ell)})^\dagger \hat{L}) : \fs(T_\lC^{\st + \tau^{(\ell)} }) \rightarrow \fs(T_\lC^{\st + \tau^{(\ell)} })
	\end{align*}
	with $(S^{(\ell)}f)(x) := (m_P m_{\hat{L}}^{(0)} m_P)f(x)$ where $m_P$ is the diagonal matrix 
	\begin{equation}\label{eq:mpii_hex}
		(m_P)_{ii} = \begin{cases} 1 & \text{ for }i\in\{2,3,4,5,6,7\} \\
			0 & \text{ else}.
		\end{cases}
	\end{equation}
	
	We summarize the algorithmical steps to obtain the error propagators.
	\medskip
	\exbox[Obtaining error propagators corresponding to splitting methods]{
		Obtain the description of the tight-binding Hamiltonian $L$ with respect to the translational invariance of the splitting via \cref{alg:mult_op_coarserning}:
		\begin{align*}
			(\hat{L})=\text{\textproc{LatticeCoarsening}}(L,\lC).
		\end{align*}
		Adjust the structure elements, such that the connections among the unknowns updated simultaneously are found within the central multiplier via~\cref{alg:struct_elem_change}:
		\begin{align*}
			(\hat{L}^{(\ell)})=\text{\textproc{ChangeStructureElement}}(\hat{L},\st+\tau^{(\ell)},\st+\tau^{(\ell)}),
		\end{align*}
		where $\tau^{(\ell)}$ is defined according to~\cref{eq:shiftedhex}.
		Using $m_P$ as defined in \cref{eq:mpii_hex} define the operators %
		\begin{align*}
			S^{(\ell)}: \fs(T_\lC^{(\st+\tau^{(\ell)})}) \rightarrow  \fs(T_\lC^{(\st+\tau^{(\ell)})}), \quad
			m_{S^{(\ell)}}^{(0)} := m_P m_{\hat{L}^{(\ell)}}^{(0)} m_P.%
		\end{align*}
	}
	\medskip
	The error propagator of a successive update of the four colors corresponds to the product of the error propagators
	$G^{(\ell)} = ( I - \omega (S^{(\ell)})^\dagger \hat{L}^{(\ell)})$.

	Now that all operators of the overlapping colored block Gauss-Seidel smoother are defined, we can compute its spectrum. %
	The computation of the eigenvalues of the error propagator is carried out with \cref{alg:compute_spectral_radii} via
	$$\textproc{ComputeSpectrum}(g,I,L,S^{(1)},\ldots,S^{(4)}).$$ The function $g$ denotes the composition of the error propagators
	\begin{align*}(I,L,S^{(1)},\ldots,S^{(4)}) \xmapsto{\enskip g\enskip } \prod_{\ell=1}^4 ( I - \omega (S^{(\ell)})^\dagger L)=:G.
	\end{align*} Note, that in this algorithm all operators are checked for compatibility and any incompatibility with respect to domains and codomains is dealt with using the transformations introduced in~\cref{sec:crystal_reps_and_isos}.
	For $\omega=\frac{1}{2}$ we obtain the plot in \cref{fig:hexagon_smooth_twogrid}, a). As the largest spectral radius is greater than one, this method cannot be used as a standalone solver.%
	\begin{figure}
		\phantom{}
		\TikzFigsABphantomresize[a)][b)]{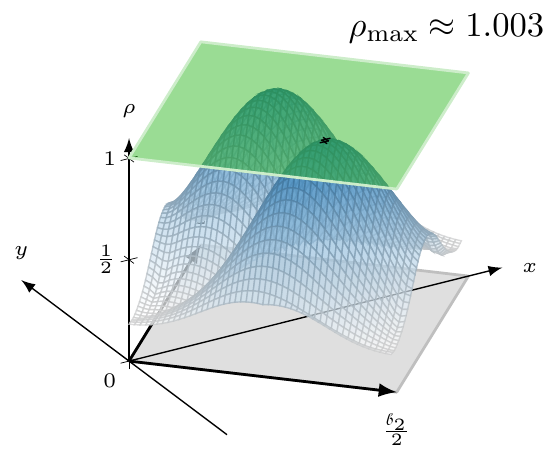}{LFA-tbh-4color-omega-05-twogrid}{LFA-tbh-4color-omega-05-v2}
		\caption{a) Plot of the spectral radii of the four-color hexagonal overlap block smoother $\prod_{\ell=1}^4 G_{\omega}^{(\ell)}$ with under-relaxation of $\omega = \frac12$ for the tight-binding Hamiltonian $L=L_{[0,-1]}$ plotted along $\PT(\frac{1}{2}\lB) = \PT((2\lA)^{-T})$. b) Plot of the spectral radii of the two-grid method using the overlap smoother.}\label{fig:hexagon_smooth_twogrid}
	\end{figure}

	\paragraph{Coarse grid correction} 
	In \cite{KahlKint2018} a Galerkin coarse grid correction is used with a corresponding coarse grid correction operator that is defined by%
	\begin{align*}
		E = (I- P(L_c)^\dagger R L)
	\end{align*}
	with $L_c = RLP$ and $P=R^T$. Just as in the description of the smoother, the shift invariance of the coarse grid is $\lC=2\lA$. Defining the coarse structure element by $2\se$, and with $\st$ denoting the structure element of the fine crystal according to the coarse lattice basis, the restriction operator can be described as
	\begin{align*}
		R : \fs(T^\st_{2\lA}) \rightarrow \fs(T^{2\se}_{2\lA}).
	\end{align*}
	The multipliers of the restriction operator according to \cite{KahlKint2018} then read

	\begin{center}
		\noindent\resizebox{.99\linewidth}{!}{
			\begin{tikzpicture}
				
				\pgfmathsetmacro{\hshift}{4}
				\pgfmathsetmacro{\vshift}{1.3}
				
				\node (c) at (0,0) {$=$}; %
				\node[left=-.1cm of c, anchor=east] {$m_R^{(0)}$};
				\node[right=-.1cm of c, anchor=west] {$\mat{
						0 & 1 & 0 & -\frac{1}{2} & 0 & -\frac{1}{2} & 0 & \frac{1}{4} \\
						\frac{1}{4} & 0 & -\frac{1}{2} & 0 & -\frac{1}{2} & 0 & 1 & 0}$};
				
				\node (c) at (0,2*\vshift) {$=$}; %
				\node[left=-.1cm of c, anchor=east] {$m_R^{(2\la_1-2\la_2)}$};
				\node[right=-.1cm of c, anchor=west] {$\mat{
						0 & 0 & 0 & 0 & 0 & \frac{1}{4} & 0 & 0 \\
						0 & 0 & 0 & 0 & \frac{1}{4} & 0 & 0 & 0}$};
				
				\node (c) at (-\hshift,\vshift) {$=$}; %
				\node[left=-.1cm of c, anchor=east] {$m_R^{(-2\la_2)}$};
				\node[right=-.1cm of c, anchor=west] {$\mat{
						0 & 0 & 0 & \frac{1}{4} & 0 & -\frac{1}{2} & 0 & -\frac{1}{2} \\
						0 & 0 & 0 & 0 & 0 & 0 & 0 & 0}$};
				
				\node (c) at (\hshift,\vshift) {$=$}; %
				\node[left=-.1cm of c, anchor=east] {$m_R^{(2\la_1)}$};
				\node[right=-.1cm of c, anchor=west] {$\mat{
						0 & 0 & 0 & 0 & 0 & 0 & 0 & 0 \\
						-\frac{1}{2} & 0 & \frac{1}{4} & 0 & -\frac{1}{2} & 0 & 0 & 0}$};
				
				\node (c) at (-1.75*\hshift,0) {$=$}; %
				\node[left=-.1cm of c, anchor=east] {$m_R^{(-2\la_1-2\la_2)}$};
				\node[right=-.1cm of c, anchor=west] {$\mat{
						0 & 0 & 0 & 0 & 0 & 0 & 0 & \frac{1}{4} \\
						0 & 0 & 0 & 0 & 0 & 0 & 0 & 0}$};
				
				\node (c) at (2.25*\hshift,0) {$=$}; %
				\node[left=-.1cm of c, anchor=east] {$m_R^{(2\la_1+2\la_2)}$};
				\node[right=-.1cm of c, anchor=west] {$\mat{
						0 & 0 & 0 & 0 & 0 & 0 & 0 & 0 \\
						\frac{1}{4} & 0 & 0 & 0 & 0 & 0 & 0 & 0}$};
				
				\node (c) at (-\hshift,-\vshift) {$=$}; %
				\node[left=-.1cm of c, anchor=east] {$m_R^{(-2\la_1)}$};
				\node[right=-.1cm of c, anchor=west] {$\mat{
						0 & 0 & 0 & -\frac{1}{2} & 0 & \frac{1}{4} & 0 & -\frac{1}{2} \\
						0 & 0 & 0 & 0 & 0 & 0 & 0 & 0}$};
				
				\node (c) at (\hshift,-\vshift) {$=$}; %
				\node[left=-.1cm of c, anchor=east] {$m_R^{(2\la_2)}$};
				\node[right=-.1cm of c, anchor=west] {$\mat{
						0 & 0 & 0 & 0 & 0 & 0 & 0 & 0 \\
						-\frac{1}{2} & 0 & -\frac{1}{2} & 0 & \frac{1}{4} & 0 & 0 & 0}$};
				
				\node (c) at (0,-2*\vshift) {$=$}; %
				\node[left=-.1cm of c, anchor=east] {$m_R^{(-2\la_1+2\la_2)}$};
				\node[right=-.1cm of c, anchor=west] {$\mat{
						0 & 0 & 0 & \frac{1}{4} & 0 & 0 & 0 & 0 \\
						0 & 0 & \frac{1}{4} & 0 & 0 &  0 & 0 & 0}$};
			\end{tikzpicture}
		}
	\end{center}
	This coarse grid correction is constructed in a way, such that it preserves the kernel of the tight-binding Hamiltonian $L$, where with $\lB=\mat{\lb_1 & \lb_2} = \lA^{-T}$, $$\operatorname{ker}(L)=\operatorname{span}\set{\mat{\cexp{K}{x} \\ 0}, \mat{0 \\ \cexp{K}{x}}}{K \in\{\frac{1}{3} \lb_1 + \frac{2}{3} \lb_2,\frac{2}{3} \lb_1 + \frac{1}{3} \lb_2\} }.$$ The frequencies corresponding to the kernel modes are known as the \emph{Dirac points}. %
	
	The two-grid analysis can now be carried out using \cref{alg:compute_spectral_radii} via
	$$\text{\textproc{ComputeSpectrum}}(f,I,L,S^{(1)},\ldots,S^{(4)}, R). $$  %
	The function $f$ denotes the composition of the two-grid error propagator
	\begin{align*}(I,L,S^{(1)},\ldots,S^{(4)},R) \xmapsto{\enskip f\enskip } GEG.%
	\end{align*}
	
	A plot of the spectral radii of the two-grid error propagator is given in b) of \cref{fig:hexagon_smooth_twogrid} which shows that the two-grid method converges with a convergence rate of $\rho_{\max} \approx 0.167$. Thus, this new method with overlapping colored block Gauss-Seidel smoothing not only yields opportunities for parallel computations, but also converges faster than the old approach which used Kaczmarz smoothing.

	In order to double-check the results of the developed theory, we show in~\cref{tbl:graphene_twogrid_results} that the asymptotic convergence rate of the two-grid method with random initial guess $x_0$ and right-hand-side $0$ coincides with the convergence rate obtained in the LFA with a relative accuracy of roughly $.002\%$. This comes as no surprise as the theory is exact for this problem with periodic boundary conditions and we have chosen the sampling of the frequency space in accordance with the problem size (cf.~\cref{rem:freq_sampling}).

	\begin{table}
		\begin{tabular}{cccc}
			\toprule
			iteration $i$ &        $||r_i||_2:=||b - A x_i||_2$ &             $\rho_i:=\frac{||r_i||_2}{||r_{i-1}||_2}$ &  $\frac{|\rho_i - \rho_{\text{analytic}}|}{\rho_{\text{analytic}}}$ \\
			\midrule
			398 &  1.161369e-312 &  0.16685534 &      0.00220\% \\
			399 &  1.937806e-313 &  0.16685539 &      0.00217\% \\
			400 &  3.233335e-314 &  0.16685545 &      0.00213\% \\
			\bottomrule
		\end{tabular}
		\caption{Convergence history of the two-grid method applied to the tight-binding Hamiltonian of graphene with $(41\times 41)\cdot 2^3$ unknowns/atoms and periodic boundary conditions. The reported asymptotic convergence rate $\rho_i$ coincides with high precision to the convergence estimate $\rho_{\text{analytic}} = \sup_{k \in T^*_{2\lA,41\cdot 2\lA}} \set{| \lambda |}{\lambda \text{ eigenvalue of } G_kE_kG_k} = 0.16685901$ obtained in aLFA.}\label{tbl:graphene_twogrid_results}
	\end{table}
	
	\subsection{Two-level analysis for the curl-curl equation}\label{subsec:curlcurl}
	
	In~\cite{BoonLentVand2008} a complete two-level analysis is carried out for the $2$-dimensional curl-curl formulation of Maxwell's equations using conventional LFA. In this subsection we want to reproduce the results of the two-grid method discussed in this paper making use of the native crystal structure of the staggered discretization of the curl-curl equations. The method consists of a so-called half-hybrid smoother, introduced in \cite{Hipt1998}, and a Galerkin coarse grid correction. %
	
	The degrees of freedom of the discrete curl-curl equation
	\begin{align}\label{eq:discrete_curl}
		(\frac{1}{h^2}K_{cc} + \sigma M)x= b
	\end{align}
	are associated with the edges of a quadrilateral lattice $\mL(\frac{1}{h} \lA)$ with  $$\lA = \mat{\la_1 &  \la_2}= \mat{1 & 0 \\ 0 & 1}.$$
	By multiplying \cref{eq:discrete_curl} with $h^2$, the grid size can be incorporated into the material parameter $\sigma_h := h^2\sigma$. Then the discrete operator 
	$K=K_{cc} + \sigma_h M$ can be expressed as a multiplication operator by
	\begin{align*}
		K: \fs(T_\lA^\see) \rightarrow \fs(T_\lA^\see) \quad \text{with} \quad 
		\see = (\see_h,\see_v) = (\frac{1}{2}\la_1, \frac{1}{2}\la_2),
	\end{align*}
	where the elements $\mL(\lA) + \see_h$ and $\mL(\lA) + \see_v$ correspond to the (midpoints of the) horizontal and the vertical edge, respectively (cf.~\cref{fig:curlcurl_crystal}).
	\begin{figure}
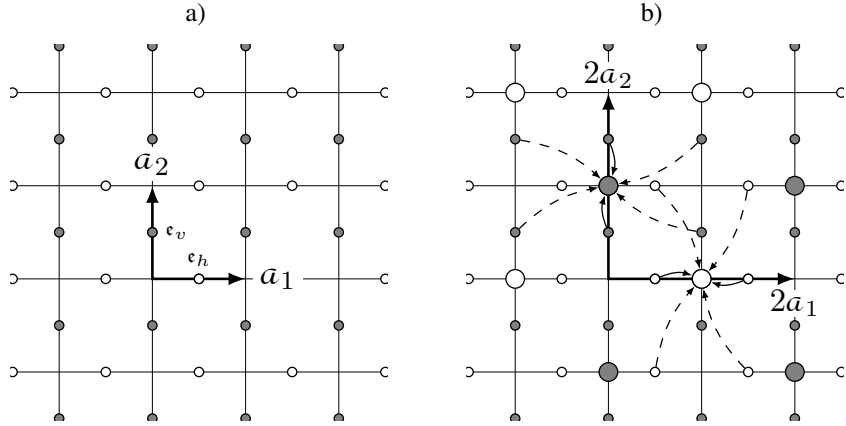

		\centering
		\TikzFigsABresize{curlcurl-crystal}{curlcurl-restriction}
		\caption{a) Crystal structure of the curl-curl discretization. b) Illustration of the coarse crystal and the restriction operator; dashed lines correspond to restriction weight $\frac14$, solid lines to $\frac12$.}
		\label{fig:curlcurl_crystal}
	\end{figure}
	According to \cite{BoonLentVand2008} the multipliers are given by:
	\begin{center}
		\resizebox{.9\textwidth}{!}{
			$ 
			\begin{array}{rccrccrcc}
				m_K^{(-\la_1+\la_2)} &= &\mat{0 & 0\\-1 & 0}&
				m_K^{(\la_2)} &= &\mat{-1+\frac16 \sigma_h & 0 \\ 1 & 0} & \\[1em]
				m_K^{(-\la_1)} &= &\mat{0 & 0 \\ 1 & -1+\frac16 \sigma_h} &
				m_K^{(0)} &= &\mat{2+\frac23\sigma_h & -1 \\ -1 & 2+\frac23\sigma_h} &
				m_K^{(\la_1)} &= &\mat{0 & 1 \\ 0 & -1+\frac16 \sigma_h} \\[1em]
				& & &
				m_K^{(-\la_2)} &= &\mat{ -1+\frac16 \sigma_h & 1 \\ 0 & 0} &
				m_K^{(\la_1-\la_2)} &= &\mat{0 & -1 \\ 0 & 0}
			\end{array}
			$}%
	\end{center}

	\paragraph{Hybrid smoother}

	The (half) hybrid smoother uses an auxiliary space correction. That is, it can be seen as a two-grid method itself where smoothing replaces the exact inversion of the auxiliary space operator. The construction of this operator is as follows.

	As already stated, we want to reproduce the results in~\cite{BoonLentVand2008}. Due to the fact that lexicographic Gauss-Seidel smoothers are used, we introduce the exact same ordering of the lattice points used in this paper, that is, the coordinates $y\in \mL(\lA)$ are ordered from bottom to top and left to right which leads us to the following definition for $x,y\in\mL(\lA)$:
	$$x=(x_1,x_2) < y=(y_1,y_2) :\Leftrightarrow x_2 < y_2 \text{ or } \left(x_2 = y_2 \text{ and } x_1 < y_1\right) .$$
	
	The error propagator of the smoother on the original crystal $T_\lA^\see$ is a node-based lexicographic Gauss-Seidel iteration. When updating a horizontal edge $x + \see_h$, $x\in\mL(\lA)$ it is assumed that all edges $y+\see_h$ and $y+\hat{\see}_v$, $\hat{\see}_v:=\see_v+\la_1 - \la_2$, with $y<x$ are already updated. When updating the edge $y+\hat{\see}_v$, it is additionally assumed that $x+\see_h$ is already updated (cf.~\cite[Figure 6.1]{BoonLentVand2008}). In order to obtain the error propagator which exactly represents this ordering, it is convenient to rewrite the operator $K$ with respect to this representation of the structure element, that is
		\begin{align*}
			K \cong \hat{K}: \fs(T_\lA^{\hat{\see}}) \rightarrow \fs(T_\lA^{\hat{\see}}) \quad \text{with} \quad 
			\hat{\see} = (\see_h,\hat{\see}_v) \cong \see,
	\end{align*}
	which can be obtained with \cref{alg:struct_elem_change} via 
	\begin{align*}
			\hat{K}=\text{\textproc{ChangeStructureElement}}(K,\hat{\see},\hat{\see}).%
		\end{align*}
	The corresponding error propagator is then given by
	\begin{align*}
		G_E = (I - S_E^\dagger K ): \fs(T_\lA^\see) \rightarrow \fs(T_\lA^\see)
	\end{align*}
	with the multipliers
	\begin{align*}
		m_{S_E}^{(y)} = 
		\begin{cases}
			m_{\hat{K}}^{(y)} & \text{if } y < 0, \\
			\texttt{tril}(m_{\hat{K}}^{(y)}) & \text{if } y=0, \\
			0 & \text {else.}
		\end{cases}
	\end{align*}
	In here, only the lower triangular part $\texttt{tril}(m_{\hat{K}}^{(0)})$ of the central multiplier is used due to the fact that it represents a Gauss-Seidel sweep where a horizontal edge is updated before a vertical edge.
	
	The crystal, where the auxiliary space system is formulated, is given by $T_\lA^{(0)}$, i.e., the nodal points between the edges. The transfer operator to this crystal is the discrete gradient operator defined by
	$R_N: \fs(T_{\lA}^\see) \rightarrow \fs(T_{\lA}^{(0)})$ 
	with non-zero multipliers
	\begin{align*}
		\begin{array}{rccrcc}
			m_{R_N}^{(-\la_1)} &=& \mat{1 & 0} & m_{R_N}^{(0)} &=& \mat{-1 & -1} \\[1em]
			& & & m_{R_N}^{(-\la_2)} &= &\mat{0 & 1}.
		\end{array}
	\end{align*}
	Using a Galerkin construction, i.e., $P_N = R_N^T$, the coarse grid operator $K_N = R_N K P_N$ can be obtained using the computation rules in~\cref{lem:calc_rules_mult_op}. Inversion of this auxiliary space operator is then approximated by Gauss-Seidel. Thus, the auxiliary space correction with a single smoothing step on the auxiliary space is given by
	\begin{align*}
		G_N = (I - P_N S_N^\dagger R_N K) : \fs( T_\lA^\see) \rightarrow \fs(T_\lA^\see),\end{align*} %
	where $S_N: \fs(T_\lA^{(0)}) \rightarrow \fs(T_\lA^{(0)})$ consists of the scalar multipliers
	\begin{align*}
		m_{S_N}^{(y)} = \begin{cases} m_{K_N}^{(y)} & \text{if } y \leq 0, \\
			0 & \text {else.}\end{cases}
	\end{align*}
	With this definition of the auxiliary space correction, the half-hybrid smoother consists of the following steps.
	
	\begin{enumerate}
		\item Smooth on $Kx=b$: $x \leftarrow (I+S_E^\dagger K)x + S_E^\dagger b$,
		\item Restrict the residual: $r_N \leftarrow R_N(b-Kx)$,
		\item Smooth on $K_N x_N= r_N$ with zero initial guess: $x_N \leftarrow S_N^\dagger r_N$,
		\item Prolongate to the primal crystal $x \leftarrow x + P_N x_N$.
	\end{enumerate}

	The smoother is analyzed analogously to \cref{subsec:multicolor_graphene} with \cref{alg:compute_spectral_radii} by computing the spectral radii via
	$$\text{\textproc{ComputeSpectrum}}(g,I,K,S_E,R_N,S_N),$$
	where $g$ denotes the composition of the error propagators
	$$(I,K,S_E,R_N,S_N) \xmapsto{\enskip g\enskip } G_N G_E =:G.
	$$
	In \cref{fig:LFA_curlcurl} a) a contour plot of $\rho(G_k)$ with $\sigma_h = 0.01$, with respect to $k\in \lA^{-T}[ -\frac{1}{4},\frac{3}{4})$ is given. It corresponds to the result given in \cite[Figure 6.2, right]{BoonLentVand2008}. %
	
	\medskip 
	\paragraph{Coarse grid correction} In \cite{BoonLentVand2008} a Galerkin coarse grid correction is used corresponding to the error propagator
	\begin{align*}
		E = (I- P(K_c)^\dagger R K)
	\end{align*}
	with $K_c = RKP$ and $P=R^T$. In here, the coarse crystal is $\mL^{2\see}(2\lA)$ and the original crystal $\mL^{\see}(\lA)$ with respect to the lattice $2\lA$ is given by $\mL^{\sef}(2\lA)$ with
	\begin{align*}
		\sef = (\see , \see + \la_1, \see + \la_2, \see + \la_1 + \la_2).
	\end{align*}
	Then, according to \cite{BoonLentVand2008} the restriction operator is given as
	$R : \fs(T^\sef_{2\lA}) \rightarrow \fs(T^{2\se}_{2\lA})$ 
	with the multipliers:
	\begin{center}
		\resizebox{.7\textwidth}{!}{
			$
			\begin{array}{rccrcc}
				m_R^{(-2\la_1)} &=& 
				\mat{0 & 0 & 0 & 0 & 0 & 0 & 0 & 0 \\
					0 & 0 & 0 & \frac14 & 0 & 0 & 0 & \frac14 }&
				m_R^{(0)} &= &\mat{
					\frac{1}{2} & 0 & \frac{1}{2} & 0 & \frac{1}{4} & 0 & \frac{1}{4} & 0\\
					0 & \frac{1}{2} & 0 & \frac{1}{4} & 0 & \frac{1}{2} & 0 & \frac{1}{4} } \\[1em]
				& & & m_R^{(-2\la_2)} &=& 
				\mat{0 & 0 & 0 & 0 & \frac14 & 0 & \frac14 & 0 \\
					0 & 0 & 0 & 0 & 0 & 0 & 0 & 0 } 
			\end{array}
			$%
		}
	\end{center}

	Here, each coarse horizontal and vertical edge is connected to its six nearest edges of the same type (cf.~\cref{fig:curlcurl_crystal} b)). The prolongation and coarse grid operator $P$ and $K_c$ can be obtained via \cref{lem:calc_rules_mult_op}.
	
	The spectral radii of the two-grid method can then be obtained via 
	\begin{align*}
		\text{\textproc{ComputeSpectrum}}(f,I,K,S_E,R_N,S_N,R) %
	\end{align*}
	where $f$ denotes the composition of the two-grid error propagator
	\begin{align*}
		(I,K,S_E,R_N,S_N,R) &\xmapsto{\enskip f\enskip } G E G =: M.
	\end{align*}
	\Cref{fig:LFA_curlcurl} b) shows the spectral radii, $\sup_k \rho(M_k)$, of the two-grid error propagator as a function of $\sigma_h$. We used exactly the same orderings in pre- and post-smoothing, as described in~\cite{BoonLentVand2008}. Furthermore, we double-checked the results with convergence tests similar to~\cref{tbl:graphene_twogrid_results}. However, the obtained results show major differences to the one in~\cite[Figure 8.1, left]{BoonLentVand2008}, which leads us to believe that our assumptions on the lexicographic orderings employed in pre- or post-smoothing in~\cite{BoonLentVand2008} are wrong, as these have a large impact on the convergence rates.

	\begin{figure}
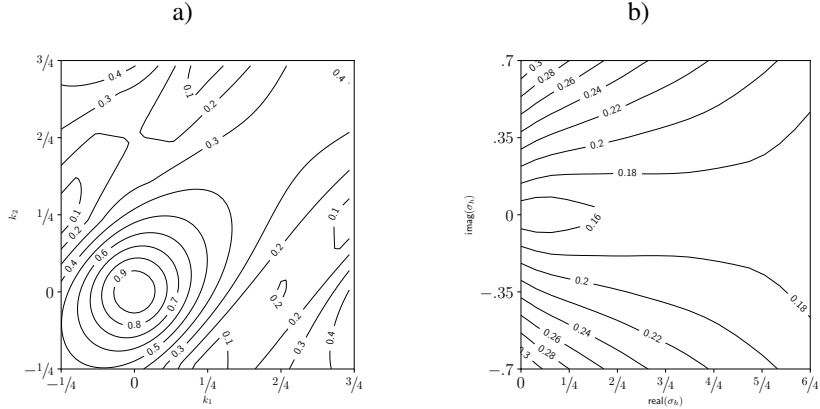

		\TikzFigsABresize{curlcurl-fixed-sigma_revision}{curlcurl-sigma-varying_revision_fixedlabel}
		\caption{a) Contour plot of $\rho(G_k)$ for the hybrid smoother with respect to $k\in\lA^{-T}[ -\frac{1}{4},\frac{3}{4})$, $\lA^{-T}=\lA$, $\sigma_h = 0.01$. b) Contour plot of the convergence estimates of the two-grid method $\sup_k \rho(M_k)$ with respect to $\sigma_h$. The plots correspond to the results \cite[Figure 6.2, right]{BoonLentVand2008} and \cite[Figure 8.1, left]{BoonLentVand2008}, respectively.}
		\label{fig:LFA_curlcurl}
	\end{figure}

	\section{Conclusion}
	In this paper we present an LFA framework that is applicable to arbitrary crystal structures, which are encountered in many applications, e.g., systems of partial differential equations, block smoothers or tight-binding formulations. This was achieved by introducing a rigorous notion of crystal structures and translationally invariant operators that manipulate value distributions on crystal structures. Based on these structures we introduced a complete framework to modify and transform these operators with respect to different crystal representations by using normal forms of integer linear algebra. This allowed us to automate the LFA in both position space, i.e., in terms of multiplication operators/stencils, and frequency space, i.e., in terms of canonical basis functions and samplings. In two examples we showed that the approach can be used for complicated operators, i.e., hexagonal grids, overlapping block smoothers and hybrid smoothers, without requiring insight into the frequency space back-end of LFA. An explicit mixing calculation is no longer needed, and the user only has to provide any representation of the individual operators. The transformation of the individual operators to a compatible representation and the subsequent frequency analysis is then carried out automatically. Even though we have limited ourselves in the examples in this paper to $2$-dimensional problems our automated LFA can be applied to operators in higher dimension, where the difference is a larger set of primitive vectors defining the underlying lattice.
	
	The automation presented in this paper does have some limitation. Each individual operator in the analysis is only allowed to change each value of the value distribution at most once. This limitation solely restricts the class of smoothers that can be analyzed with this approach. Any sequential, i.e., lexicographic, smoother with overlapping update regions changes values in the overlap multiple times in one application. This cannot be easily translated to a corresponding local multiplication operator, but it can be dealt with in frequency space (cf.~\cite{MacLOost2011,Molenaar1991,Sivaloganathan:1991:ULM}). This particular treatment of sequential overlap is momentarily not covered in our framework. %
	Note, that the mere presence of overlap is not the problem here. By introducing a coloring, such that the complete sweep can be split into a sequence of updates where each one of them only changes values at most once, automated LFA can be applied (cf.~\cref{subsec:multicolor_graphene}). Due to the fact that a coloring in overlapping approaches also favors parallelism over their sequential counterparts, we feel that this limitation is relatively minor when targeting actual applications. %
	An open-source implementation of the automated LFA framework~\cite{aLFA_NK} is freely available on GitLab\footnote{\href{https://gitlab.com/NilsKintscher/alfa}{\texttt{gitlab.com/NilsKintscher/alfa}}}. Jupyter Notebooks for all examples from~\cref{sec:example,subsec:multicolor_graphene,subsec:curlcurl} are included in this software package as well.

	\appendix
	\section{Rules of computation}\label{sec:rules_of_comp} %
	Calculus of multiplication operators plays a key-role in local Fourier analysis. In this section we list all elementary operations, such as addition and multiplication. Proofs for these rules can be obtained by straightforward calculation.
	\begin{lemma}
		\label{lem:calc_rules_mult_op}
		Let two multiplication operators be given by
		\begin{align*}
			\begin{array}{rcl}
				L:\fs(T_{\lA  ,\lZ }^{\se}) \rightarrow \fs(T_{\lA  ,\lZ }^{\st}), &
				(Lf)(x) = \displaystyle\sum_{y\in T_{\lA  ,\lZ }}  m_L^{(y)} f(x+y),& m_L^{(y)} \in\mC^{|{\st}|\times |{\se}|},\\
				G:\fs(T_{\lA  ,\lZ }^{\seu})\rightarrow \fs(T_{\lA  ,\lZ }^{\sev}),& %
				(Gf)(x) = \displaystyle\sum_{y\in T_{\lA  ,\lZ }}  m_G^{(y)} f(x+y), & m_G^{(y)} \in\mC^{|{\sev}|\times |{\seu}|}.
			\end{array}
		\end{align*}
		Then the following operators are multiplication operators as well:
		\begin{enumerate}%
			\item[(i)] If $\se = \seu$ and $\st=\sev$, then 
			$
			L+G:\fs(T_{\mc{A},\mc{Z}}^\se) \rightarrow \fs(T_{\mc{A},\mc{Z}}^\st) \text{ with } m_{L+G}^{(y)} =  m_L^{(y)}+m_G^{(y)}.$ \smallskip
			\item[(ii)] If $\sev = \se$, then 
			$LG:\fs(T_{\mc{A},\mc{Z}}^{\seu}) \rightarrow \fs(T_{\mc{A},\mc{Z}}^{{\st}}) \text{ with } m_{LG}^{(z)} = \sum\limits_{y+w=z} m_L^{(y)} \cdot m_G^{(w)}.$%
			\item[(iii)] The adjoint is given by
			$L^*:\fs(T_{\mc{A},\mc{Z}}^\st) \rightarrow \fs(T_{\mc{A},\mc{Z}}^\se) \text{ with }m_{L^*}^{(y)} = (m_{L}^{(-y)})^*.$
		\end{enumerate}
	\end{lemma}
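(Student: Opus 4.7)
The plan is to verify each of the three rules by direct computation from the definitions of multiplication operators and the scalar product on $\fs(T_{\lA,\lZ}^{\se})$. All three arguments hinge on the fact that $T_{\lA,\lZ}$ is a finite abelian group under translation, so any reindexing of torus sums by a fixed shift (e.g.\ $z = y+w$ or $x \mapsto x-y$) is a bijection and leaves the sum unchanged.

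For (i), I would apply the definitions of $L$ and $G$ to an arbitrary $f$ and use linearity of the finite sum to combine the two multiplier expansions:
\begin{align*}
((L+G)f)(x) = \sum_{y} m_L^{(y)} f(x+y) + \sum_{y} m_G^{(y)} f(x+y) = \sum_{y}(m_L^{(y)}+m_G^{(y)})\, f(x+y),
\end{align*}
which lets me read off the multipliers directly.

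For (ii), I would substitute the multiplier representation of $G$ into that of $L$, producing a double sum $\sum_y \sum_w m_L^{(y)} m_G^{(w)} f(x+y+w)$, and then reindex using $z = y+w$ to group terms by the total shift. This delivers the convolution formula $m_{LG}^{(z)} = \sum_{y+w=z} m_L^{(y)} m_G^{(w)}$.

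For (iii), I would appeal to the defining identity $\ip[]{Lf}{g} = \ip[]{f}{L^* g}$ for all admissible $f,g$. Expanding the left-hand side via the multiplier representation of $L$ and the definition of the function-space scalar product gives a double sum over $x$ and $y$. Performing the substitution $x \mapsto x-y$ in the $x$-sum (legitimate on the torus) moves the shift from $f$ onto $g$, and the adjoint property $\ip{m_L^{(y)} u}{v} = \ip{u}{(m_L^{(y)})^* v}$ of the Euclidean inner product on $\mC^{|\st|}$ brings the multiplier onto the second argument. A final renaming $y \mapsto -y$ then identifies $m_{L^*}^{(y)} = (m_L^{(-y)})^*$. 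The main obstacle, and really the only piece of bookkeeping that needs attention, is this adjoint calculation: one must simultaneously track the conjugate transpose arising from the Euclidean inner product and the sign reversal produced by the torus reindexing, which together are responsible for both the $-y$ and the $*$ appearing in the formula $(m_L^{(-y)})^*$.
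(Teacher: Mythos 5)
Your proposal is correct and matches the paper's approach: the paper omits the details, stating only that these rules follow by straightforward calculation, and your three computations (linearity for the sum, reindexing the double sum by $z=y+w$ for the product, and the substitution $x\mapsto x-y$ together with the Euclidean adjoint and the renaming $y\mapsto -y$ for $L^*$) are precisely those calculations. In particular, your bookkeeping in (iii) correctly accounts for both the sign reversal and the conjugate transpose in $m_{L^*}^{(y)}=(m_L^{(-y)})^*$.
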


	\begin{theorem}
		\label{thm:Eigenvalues_Crystal_Mult_Op}
		Let two multiplication operators be given by
		\begin{align*}
			\begin{array}{rcl}
				L:\fs(T_{\lA  ,\lZ }^{\se}) \rightarrow \fs(T_{\lA  ,\lZ }^{\st}), &
				(Lf)(x) = \displaystyle\sum_{y\in T_{\lA  ,\lZ }}  m_L^{(y)} f(x+y),& m_L^{(y)} \in\mC^{|{\st}|\times |{\se}|},\\
				G:\fs(T_{\lA  ,\lZ }^{\seu})\rightarrow \fs(T_{\lA  ,\lZ }^{\sev}),& %
				(Gf)(x) = \displaystyle\sum_{y\in T_{\lA  ,\lZ }}  m_G^{(y)} f(x+y), & m_G^{(y)} \in\mC^{|{\sev}|\times |{\seu}|}
			\end{array}
		\end{align*}
		with corresponding symbols $L_k$ and $G_k$.
		Then we have the following statements. \smallskip
		\begin{enumerate}%
			\item[(i)] Assuming that ${\se} = {\seu}$ and ${\st} = {\sev}$, the symbols of $L+G$ are given by $L_k+ G_k$. \smallskip%
			\item[(ii)] Assuming that ${\sev} = {\se}$, the symbols of $L\cdot G$ are given by $L_k\cdot G_k$. \smallskip%
			\item[(iii)] The symbols of $L^*$ are given by $L_k^*$. \smallskip
			\item[(iv)] The symbols $(L^\dagger)_k$ are given by $(L_k)^\dagger$.%
		\end{enumerate}
	\end{theorem}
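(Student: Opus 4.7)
The plan is to prove (i)--(iii) by direct manipulation of the defining series for the symbols combined with \cref{lem:calc_rules_mult_op}, and to prove (iv) by exploiting the block-diagonal structure of multiplication operators in the wave-function basis guaranteed by \cref{thm:L-invariance}.

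For (i), I would substitute $m_{L+G}^{(y)} = m_L^{(y)} + m_G^{(y)}$ into \cref{def:crystal_op_symbol} and split the sum. For (ii), starting from $m_{LG}^{(z)} = \sum_{y+w=z} m_L^{(y)} m_G^{(w)}$, the key step is to rewrite $\cexp{k}{z} = \cexp{k}{y}\cexp{k}{w}$ using bilinearity of the pairing and reindex the double sum as a product of independent sums over $y$ and $w$, producing $L_k G_k$. For (iii), using $m_{L^*}^{(y)} = (m_L^{(-y)})^*$ and the substitution $y \mapsto -y$, together with the identity $\cexp{k}{y} = \overline{\cexpm{k}{y}}$, the symbol series collapses to the conjugate transpose of $L_k$. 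All three parts are routine algebraic calculations that I would simply carry out and verify.

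Part (iv) is the main obstacle because, as noted in the discussion preceding the theorem, $L^\dagger$ is generally not a multiplication operator with any simple closed-form stencil, so the argument cannot proceed at the level of multipliers. Instead I would argue spectrally: by \cref{thm:wavefunction_ONB_vec} the function space decomposes as an orthogonal direct sum $\fs(T_{\lA,\lZ}^{\se}) = \bigoplus_{k \in T_{\lA,\lZ}^*} H_k$, and by \cref{thm:L-invariance} together with \cref{eq:symbol}, $L$ acts on each $H_k$ as the matrix $L_k$ expressed in the basis $\{e_{\ell,k}\}_\ell$. Hence $L$ is unitarily equivalent to the orthogonal block-diagonal operator $\bigoplus_k L_k$ (noting that for the non-square case $\se \neq \st$ we decompose both domain and codomain accordingly, and the wave-function basis on each side is orthonormal so the unitary equivalence is preserved).

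The final step is to invoke the fact that the Moore--Penrose pseudo-inverse respects orthogonal block-diagonal decompositions: if $A = \bigoplus_k A_k$ with respect to orthogonal decompositions of domain and codomain, then $A^\dagger = \bigoplus_k A_k^\dagger$. This is immediate from verifying the four Penrose axioms blockwise, each block being independent. Applying this to $L$ yields $L^\dagger = \bigoplus_k L_k^\dagger$, and reading off the action on $H_k$ identifies the symbol of $L^\dagger$ at $k$ as exactly $(L_k)^\dagger$, completing part (iv). The only care needed is to confirm that $L^\dagger$ is itself translationally invariant, hence a multiplication operator with a well-defined symbol in the sense of \cref{def:crystal_op_symbol}; this follows because the block-diagonal action above commutes with all $\trop_\la$, $\la \in \mL(\lA)$, since translations act as scalar multiplication by $\cexp{k}{\la}$ on each $H_k$.
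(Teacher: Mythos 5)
Your proposal is correct. The paper gives no proof of this theorem at all; the preceding text only remarks that such rules ``can be obtained by straightforward calculation,'' and your treatment of (i)--(iii) is exactly that intended computation (split the sum, factor $\cexp{k}{y+w}=\cexp{k}{y}\cexp{k}{w}$ in the double sum, and conjugate-transpose the series after the substitution $y\mapsto -y$). Part (iv) is the one place where a multiplier-level calculation cannot work --- as the paper itself notes, $L^\dagger$ admits no simple stencil --- and your argument via the unitary equivalence $L\cong\bigoplus_k L_k$ with respect to the orthonormal wave-function bases of domain and codomain, the blockwise verification of the four Penrose conditions, and the check that $L^\dagger$ commutes with all $\trop_\la$ (so that it is itself a multiplication operator with a well-defined symbol) is the right way to make the claim rigorous; the only detail worth stating explicitly is that the domain and codomain inner products carry the same normalization factor $\nicefrac{1}{|T_{\lA,\lZ}|}$, so the adjoint, and hence the Moore--Penrose pseudo-inverse, computed in these orthonormal bases coincides with the matrix pseudo-inverse of $L_k$.
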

	\section{Algorithms}\label{sec:algorithms} %
	\begin{algorithm2e}[H]{\footnotesize
			\caption{Spectrum of a composition of multiplication operators.}\label{alg:compute_spectral_radii}
			\SetAlgorithmStyle
			\Input{$L^{(j)}:T_{\lA^{(j)}}^{\se^{{(j)}}} \rightarrow T_{\lA^{(j)}}^{\st^{{(j)}}}, m_{L^{(j)}}^{(x)} \in \mC^{|{\st^{{(j)}}}|\times |{\se^{{(j)}}}|}, x\in\mL(\lA^{(j)})$ and composition function $f$.
			}
			\Output{Spectra of $f(L^{(1)}_k,\ldots,L^{(K)}_k)$, $k\in\PT(\lA^{-T})$}\smallskip
			\Fn{$\func{ComputeSpectrum}{f,L^{(1)},\ldots,L^{(K)}}$}{ 
				$(\hat{L}^{(1)},\ldots,\hat{L}^{(K)})=\func{MakeOperatorsCompatible}{L^{(1)},\ldots,L^{(K)}}$ \Comment*{See \cref{alg:Make_Operators_Consistent}}%
				Sample the dual lattice to obtain $k\in \PT(\lA^{-T})$ \Comment*{See \cref{rem:freq_sampling}}%
				Compute the spectra of the composition operator of symbols %
				\smallskip
				\nonl \quad\quad\quad\quad\quad\quad\quad\quad\quad\quad\quad\quad  
				$f(L^{(1)}_k,\ldots,L^{(K)}_k)$ \Comment*{See \cref{thm:Eigenvalues_Crystal_Mult_Op}}
			}
		}\end{algorithm2e}
	\clearpage
	\begin{algorithm2e}[H]{\footnotesize
			\caption{Normalize a multiplication operator.}\label{alg:mult_op_normalize}
			\SetAlgorithmStyle
			\Input{$L:T_{\lA}^{\se} \rightarrow T_{\lA}^{\st}, m_L^{(x)} \in \mC^{|{\st}|\times |{\se}|}, x\in\mL(\lA)$.}
			\Output{$G \cong L$ in \emph{normal form} with
				$G:T_{\lA}^{\seu} \rightarrow T_{\lA}^{\sev},  m_G^{(x)} \in \mC^{|{\sev}|\times |{\seu}|}, x\in\mL(\lA).$}\smallskip
			\Fn(\hfill $\triangleright$ See \cref{def:crystal_op_and_normalform}){$G = \func{Normalize}{L}$}{ 
				$\seu_j = \se_j - \lA \lfloor \lA^{-1}\se_j \rfloor $ for all $j=1,\ldots,|\se|$ \Comment*{Shift into $\PT(\lA) = \lA[0,1)^n$}%
				$\sev_j = \st_j - \lA \lfloor \lA^{-1}\st_j \rfloor $ for all $j=1,\ldots,|\st|$ \Comment*{Shift into $\PT(\lA) = \lA[0,1)^n$}%
				Sort $\seu$ and $\sev$ lexicographically\;%
				$G = \func{ChangeStructureElement}{L,\seu,\sev}$ \Comment*{See \cref{alg:struct_elem_change}}
			}
		}\end{algorithm2e}
	\begin{algorithm2e}[H]{\footnotesize
			\caption{Find all elements in the quotient space of two lattices.}\label{alg:elements_in_quotient_space}
			\SetAlgorithmStyle
			\Input{Two $n$-dimensional lattices with $\mL(\lC), \mL(\lA)$ with $\mL(\lC) \subset \mL(\lA)$.}
			\Output{Structure element $\se \cong T_{\lA,\lC} = \bigslant{\mL(\lA)}{\mL(\lC)}$}\smallskip
			\Fn(\hfill $\triangleright$ See \cref{cor:quotient_latticepointlist}){$\se= \func{ElementsInQuotientSpace}{\lA,\lC}$}{ %
				$H = $ Hermite normal form of $\lA^{-1}\lC$ \Comment*{See \cref{thm:integer_normalforms}}%
				$m = \prod_{i=1}^n h_{i,i}$ \Comment*{Size of $\se$}%
				$\se_i = 0$ for all $i=0,\ldots,m$ \Comment*{Initialize $\se$}%
				\For{$i=1,2,\ldots,m$}{
					$k=i-1$\;
					\For{$j=1,2,\ldots,n$}{
						$t = \operatorname{mod}(k,h_{j,j})$ \Comment*{Shift in direction $\la_j$}%
						$k=\frac{k-t}{h_{j,j}}$\;
						$\se_i = \se_i + t \la_j$\;
					}
				}
			}
		}\end{algorithm2e}
	\begin{algorithm2e}[H]{\footnotesize
			\caption{Rewrite a multiplication operator w.r.t.\ a coarser lattice.}\label{alg:mult_op_coarserning}
			\SetAlgorithmStyle
			\Input{$L:T_{\lA}^{\se} \rightarrow T_{\lA}^{\st}, m_L^{(x)} \in \mC^{|{\st}|\times |{\se}|}, x\in\mL(\lA),$ and a sublattice\ $\mL(\lC) \supset \mL(\lA)$}
			\Output{$G\cong L$ with 
				$G:T_{\lC}^{\seu} \rightarrow T_{\lC}^{\sev}, m_G^{(x)} \in \mC^{|{\sev}|\times |{\seu}|}, x\in\mL(\lC).$}\smallskip
			\Fn(\hfill $\triangleright$ See \cref{thm:operator_sublattice_coarsening}){$G = \func{LatticeCoarsening}{L,\lC}$}{%
				$\see= \func{ElementsInQuotientSpace}{\lA,\lC}$ \Comment*{See \cref{alg:elements_in_quotient_space}}%
				$\seu = (\see_1 + \se ,\ldots, \see_{|\see|} + \se), \sev = (\see_1 + \st ,\ldots, \see_{|\see|} + \st)$     \Comment*{Define structure elements}
				$(m_{{G}}^{({y})})=0 \in \mC^{|\sev|\times |\seu|}$ for all ${y}\in\mL(\lC)$ \Comment*{Initialize new multipliers}
				\For{$m_L^{(y)} \neq 0$}{
					\For{$(i,j) \in \{1,\ldots,|\see|\}^2$}{ %
						$(m_{G}^{(\lC\lfloor \lC^{-1}(y+{\see}_i-{\see}_j) \rfloor)})_{i,j} = m_L^{(y)}$ \Comment*{Define multipliers block-wise}
					}
				}
			}
		}\end{algorithm2e}
	\begin{algorithm2e}[H]{\footnotesize
			\caption{Rewriting multiplication operators w.r.t.\ a single lattice.}\label{alg:Make_Operators_Consistent}
			\SetAlgorithmStyle
			\Input{$L^{(j)}:T_{\lA^{(j)}}^{\se^{{(j)}}} \rightarrow T_{\lA^{(j)}}^{\st^{{(j)}}}, m_{L^{(j)}}^{(x)} \in \mC^{|{\st^{{(j)}}}|\times |{\se^{{(j)}}}|}, x\in\mL(\lA^{(j)})$
			}%
			\Output{$\hat{L}^{(j)}\cong L^{(j)}$ in \emph{normal form}, $\hat{L}^{(j)}:T_{\lA}^{\seu^{{(j)}}} \rightarrow T_{\lA}^{\sev^{{(j)}}},  m_{\hat{L}^{(j)}}^{(x)} \in \mC^{|{\sev^{{(j)}}}|\times |{\seu^{{(j)}}}|}, x\in\mL(\lA).%
				$%
			}\smallskip
			
			\Fn{$(\hat{L}^{(1)},\ldots,\hat{L}^{(K)}) = \func{MakeOperatorsCompatible}{%
					{L}^{(1)},\ldots,{L}^{(K)}}$}{
				$\lA = \lA^{(1)}$\;
				\For{$j=2,\ldots,K$}{
					$\lA = \func{LeastCommonMultiple}{\lA, \lA^{(j)}}$ \Comment*{See \cref{alg:lattice_lcm}}
				}
				\For{$j=1,\ldots,K$}{
					$\hat{L}^{(j)} = \func{LatticeCoarsening}{L^{(j)}, \lA}$ \Comment*{See \cref{alg:mult_op_coarserning}}
					$\hat{L}^{(j)} = \func{Normalize}{\hat{L}^{(j)}}$ \Comment*{See \cref{alg:mult_op_normalize}}%
				}%
			}
		}
	\end{algorithm2e}
	\begin{algorithm2e}{\footnotesize
			\caption{Changing the structure elements.}%
			\label{alg:struct_elem_change}
			\SetAlgorithmStyle
			\Input{Structure elements $\seu \cong \se$, $\sev \cong \st$ w.r.t.\ $\mL(\lA)$ and
				$L:T_{\lA}^{\se} \rightarrow T_{\lA}^{\st},\ m_L^{(x)} \in \mC^{|{\st}|\times |{\se}|}, x\in\mL(\lA)$.
			}
			\Output{$\hat{L} \cong L$ with $\hat{L}:T_{\lA}^{\seu} \rightarrow T_{\lA}^{\sev}, m_{\hat{L}}^{(x)} \in \mC^{|{\sev}|\times |{\seu}|}, x\in\mL(\lA)$.
			}\smallskip

			\Fn(\hfill $\triangleright$ See \cref{thm:shifted_structure_element,thm:permuted_structure_element}){$\hat{L} = \func{ChangeStructureElement}{L,{\seu},{\sev}}$}{
				$m_\pi =0 \in \{0,1\}^{|\se|\times|\se| }$,  $m_\sigma = 0 \in \{0,1\}^{|\st|\times|\st| }$ \Comment*{Initialize permutation matrices}%
				\For(\hfill $\triangleright$ Compute changes in $\sed$){$(i,j) \in \{1,\ldots,|\se|\}^2$}{%
					\If{$\lA^{-1}( \se_i - \seu_j)$ is integral}{ 
						$\see_i=\se_i - \seu_j$ \Comment*{Save shift}%
						$(m_\pi)_{j,i} = 1$ \Comment*{Save permutation}%
					}
				}
				\For(\hfill $\triangleright$ Compute changes in $\sec$){$(i,j) \in \{1,\ldots,|\st|\}^2$}{%
					\If{$\lA^{-1}( \st_i - \sev_j)$ is integral}{ 
						$\sef_i=\st_i - {\sev}_j$ \Comment*{Save shift}%
						$(m_\sigma)_{j,i} = 1$ \Comment*{Save permutation}%
					}
				}
				$(m_{\hat{L}}^{({y})})=0 \in \mC^{|\st|\times |\se|}$ for all ${y}\in\mL(\lA)$ \Comment*{Initialize new multipliers}
				\For{$m_L^{(y)} \neq 0$}{
					\For{$(i,j) \in \{1,\ldots,|\st|\}\times\{1,\ldots,|\se|\}$}{ 
						$(m_{\hat{L}}^{(y - \see_j + \sef_i )})_{i,j} = (m_{L}^{(y )})_{i,j} $  \Comment*{Incorporate shifts $\see$ and $\sef$, see \cref{thm:shifted_structure_element}}%
					}
				}
				$m_{\hat{L}}^{(y)} =  m_\sigma \cdot  m_{\hat{L}}^{(y)}\cdot m_\pi^{-1}$ for all ${y}\in\mL(\lA)$  \Comment*{Incorporate permutations, see \cref{thm:permuted_structure_element}}%
			}
		}
	\end{algorithm2e}
	\begin{algorithm2e}{\footnotesize
			\caption{Find a least common multiple of two lattices.}\label{alg:lattice_lcm}
			\SetAlgorithmStyle
			\Input{Lattice basis $\lB,\lA \in \mR^{n\times n}$}
			\Output{Lattice basis $\lC$, s.t. $\mL(\lC) \subset \mL(\lA)$ and $\mL(\lC) \subset \mL(\lB)$}\smallskip
			\Fn(\hfill $\triangleright$ See \cref{thm:lcm_mat}){$\lC = \func{LeastCommonMultiple}{\lA,\lB}$}{%
				Find an integer $r$, s.t. $M=r\lA^{-1}\lB$ is integral\;
				Compute Smith normal form $S=V^{-1}MT^{-1}$ of $M$ \Comment*{See \cref{thm:integer_normalforms}}%
				$(N_{\lB})_{i,i} = r \cdot \operatorname{gcd}(r,s_{i})^{-1}$, 
				$\lC = \lB T^{-1} N_{\lB}$ \Comment*{Define the lattice basis}%
			}
		}\end{algorithm2e}

	\bibliographystyle{spmpsci}
	
	\bibliography{Manuscript} %
	 
\end{document}